\newcommand{\cp}[1]{\bC\mbox{P}^{#1} }
\newcommand{\brak}[1]{\langle #1\rangle}
\newcommand{\n}{\noindent}
\newcommand{\foam}{\mathbf{Foam}_{N}}
\newcommand{\PF}{\mathbf{Pre-foam}}
\newcommand{\sln}{\mathfrak{sl}(N)}
\newcommand{\V}{\mathbf{Vect}_{\bZ}}
\DeclareMathOperator{\sk}{s_\gamma}
\newcommand{\figins}[3] 
{\raisebox{#1pt}{\includegraphics[height=#2 in]{#3.eps}}}
\newcommand{\figwins}[3] 
{\raisebox{#1pt}{\includegraphics[width=#2 in]{#3.eps}}}
\newtheorem{thm}{Theorem}[section]
\newtheorem{lem}[thm]{Lemma}
\newtheorem{cor}[thm]{Corollary}
\newtheorem{prop}[thm]{Proposition}
\theoremstyle{definition}
\newtheorem{defn}[thm]{Definition}
\newcommand{\bZ}{\mathbb{Z}}
\newcommand{\bQ}{\mathbb{Q}}
\newcommand{\bC}{\mathbb{C}}
\newcommand{\cF}{\mathcal{F}}
\newcommand{\ra}{\rightarrow}
\newcommand{\lra}{\longrightarrow}
\newcommand{\qbin}[2]{\left[{{#1}\atop {#2}}\right]}
\def\G{\mathcal{G}}
\def\deg{\mathop{\rm deg}}
\def\rank{\mathop{\rm rank}}
\def\sgn{\mathop{\rm sgn}}
\def\Tr{\mathop{\rm Tr}}
\def\End{\mathop{\rm End}}
\def\Ext{\mathop{\rm Ext}}
\def\Id{\mathop{\rm Id}}
\def\qdim{\mathop{\rm qdim}}
\title{$\sln$-link homology ($N\geq 4$) using foams and the Kapustin-li formula}
\author{Marco Mackaay}
\address{Departamento de Matem\'{a}tica\\ Universidade do Algarve\\ 
Campus de Gambelas\\ 8005-139 Faro\\ Portugal and CAMGSD\\Instituto Superior T\'{e}cnico\\ Avenida Rovisco Pais\\ 
1049-001 Lisboa\\ Portugal}
\email{mmackaay@ualg.pt}
\author{Marko Sto\v si\'c }
\address{Instituto de Sistemas e Rob\'{o}tica and CAMGSD\\Instituto Superior T\'{e}cnico\\ Avenida Rovisco Pais\\ 
1049-001 Lisboa\\ Portugal}
\email{mstosic@math.ist.utl.pt}
\author{Pedro Vaz}
\address{Departamento de Matem\'{a}tica\\ Universidade do Algarve\\ 
Campus de Gambelas\\ 8005-139 Faro\\ Portugal and  
CAMGSD\\Instituto Superior T\'{e}cnico\\ Avenida Rovisco Pais\\ 
1049-001 Lisboa\\ Portugal}
\email{pfortevaz@ualg.pt}
\date{}
\begin{document}

\begin{abstract}
We use foams to give a topological construction of a rational link homology categorifying 
the $\mathfrak{sl}(N)$ link invariant, for $N\geq 4$. To evaluate 
closed foams we use the Kapustin-Li formula adapted to foams by 
Khovanov and Rozansky~\cite{KR-LG}. 
We show that for any link our homology is isomorphic to the 
Khovanov-Rozansky~\cite{KR} homology.
\end{abstract}

\maketitle

\section{Introduction}
\label{sec:intro}

In~\cite{MOY} Murakami, Ohtsuki and Yamada (MOY) developed a graphical calculus for the 
$\mathfrak{sl}(N)$ link polynomial. 
In~\cite{khovanovsl3} Khovanov categorified the $\mathfrak{sl}(3)$ polynomial 
using singular cobordisms between webs called foams. Mackaay and Vaz~\cite{mackaay-vaz} 
generalized Khovanov's results to obtain the universal $\mathfrak{sl}(3)$ integral link 
homology, following an approach similar to the one adopted by Bar-Natan~\cite{bar-natancob} 
for the original $\mathfrak{sl}(2)$ integral Khovanov homology. 
In~\cite{KR} Khovanov and Rozansky (KR) 
defined a rational link homology which categorifies the $\mathfrak{sl}(N)$ 
link polynomial using the theory of 
matrix factorizations. 

In this paper we use foams, as in~\cite{bar-natancob, khovanovsl3, mackaay-vaz}, for an 
almost completely combinatorial topological construction of 
a rational link homology categorifying the 
$\mathfrak{sl}(N)$ link 
polynomial. Our theory is functorial under link cobordisms. 
Khovanov had to modify 
considerably his original setting for the construction of $\mathfrak{sl}(2)$ 
link homology in order to produce his $\mathfrak{sl}(3)$ link homology. It required the 
introduction of singular cobordisms with a particular type of singularity. The jump from 
$\mathfrak{sl}(3)$ to $\mathfrak{sl}(N)$, for $N>3$, requires the introduction of a new type of 
singularity. The latter is needed for proving invariance under the third Reidemeister move. 
Furthermore the combinatorics involved in establishing certain identities gets much harder 
for arbitrary $N$. The theory of symmetric polynomials, in particular Schur polynomials, is 
used to handle that problem. 

Our aim was to find a 
combinatorial topological definition of Khovanov-Rozansky 
link homology. Such a definition is desirable for several 
reasons, the main one being that it might help to find a 
good way to compute the Khovanov-Rozansky link homology. 
Unfortunately the construction that we 
present in this paper is not completely combinatorial. 
The introduction of the 
new singularities makes it much harder to evaluate closed 
foams and we do not know how to do it combinatorially. 
Instead we use the Kapustin-Li formula~\cite{KL}, adapted by 
Khovanov and Rozansky~\cite{KR-LG}\footnote{We 
thank M Khovanov for suggesting that we try to use the 
Kapustin-Li formula.}. A positive side-effect is that it 
allows us to show that for any link 
our homology is isomorphic to Khovanov and Rozansky's. 

Although we have not completely achieved our final goal, we believe 
that we have made good progress towards it. In 
Propositions~\ref{prop:principal rels1} and 
\ref{prop:principal rels2} we derive a small 
set of relations on foams which we show to be 
sufficient to 
guarantee that our link homology is homotopy invariant  
under the Reidemeister moves. By deriving these relations 
from the Kapustin-Li formula we prove that these relations 
are consistent. However, in order to get a purely 
combinatorial construction we would have to show that 
they are also sufficient for the evaluation of closed foams, 
or, equivalently, that they generate the kernel of the 
Kapustin-Li formula.  
We conjecture that this holds true, but so far our attempts 
to prove it have failed. It would be very interesting to 
have a proof of this conjecture, not just because it 
would show that our method is completely combinatorial, 
but also because our theory could then be used to prove that 
other constructions, using different combinatorics, 
representation theory or symplectic/complex geometry, give 
functorial link homologies equivalent to 
Khovanov and Rozansky's. So far we can only conclude that any other way of 
evaluating closed foams which satifies the same relations as in 
Propositions~\ref{prop:principal rels1} 
and \ref{prop:principal rels2} gives rise to a functorial link homology 
which categorifies the $\mathfrak{sl}(N)$ link polynomial. 
We conjecture that such a link homology is equivalent to the one 
presented in this paper and therefore to Khovanov and Rozansky's.  

In section~\ref{sec:slN} we recall some basic facts about 
the 
$\mathfrak{sl}(N)$-link polynomials. In section~\ref{sec:partial flags} we recall some 
basic facts about Schur polynomials and the cohomology of partial flag 
varieties. In section~\ref{sec:pre-foam} we define pre-foams and their 
grading. In section~\ref{sec:KL} we explain the Kapustin-Li formula for 
evaluating closed pre-foams and compute the spheres and the theta-foams. 
In section~\ref{sec:foamN} we derive a set of basic relations in the 
category $\foam$, which is the quotient of the category of pre-foams by 
the kernel of the Kapustin-Li evaluation. In section~\ref{sec:invariance} 
we show that our link homology complex is homotopy invariant under the 
Reidemeister moves. In section~\ref{sec:functoriality} we show that our link 
homology complex extends to a link homology functor. In 
section~\ref{sec:taut-functor} we show that our link homology, obtained from 
our link homology complex using the tautological functor, 
categorifies the $\mathfrak{sl}(N)$-link polynomial and that it is isomorphic to the 
Khovanov-Rozansky link homology.

\section{Graphical calculus for the $\mathfrak{sl}(N)$ polynomial}
\label{sec:slN}

In this section we recall some facts about the graphical calculus for $\mathfrak{sl}(N)$.
The $\mathfrak{sl}(N)$ link polynomial is defined by the skein relation
$$
q^{N}
P_N(\undercrossing)
-q^{-N}
P_N(\overcrossing)
=(q-q^{-1})
P_N(\orsmoothing),
$$
and its value for the unknot, which we take to be equal to $[N]=(q^N-q^{-N})/(q-q^{-1})$. 
Let $D$ be a diagram of a link $L\in S^3$ with $n_+$ positive crossings and $n_-$ negative crossings. 
\begin{figure}[h!]
$$\begin{array}{cclcccl}
\text{positive: } \ 
\figins{-9}{0.3}{overcrossing} &=&   
\figins{-9}{0.3}{orsmoothing}  -   q  
\figins{-9}{0.3}{dbedge} 
&\quad&
\text{negative: } \ 
\figins{-9}{0.3}{undercrossing} &=& \, 
\figins{-9}{0.3}{dbedge} -  q \
\figins{-9}{0.3}{orsmoothing} \vspace{1ex} 
\\
&& \hspace{1.7ex} 0 \hspace{6.4ex}  1 &&&& \hspace{1.7ex} 0 \hspace{7ex} 1
\end{array}$$
\caption{Positive and negative crossings and their 0 and 1-flattening}
\label{fig:flatten}
\end{figure}
Following an approach based on MOY's state sum model~\cite{MOY} 
we can compute $P_N(D)$ by flattening each crossing of $D$ in two possible ways, 
as shown in Figure~\ref{fig:flatten}, 
where we also show our convention for positive and negative crossings. 
Each complete flattening of $D$ is an example of a {\em web}: a trivalent graph with three  
types of edges: {\em simple}, {\em double} and {\em marked} edges. 
Only the simple edges are equipped with an orientation. 
Near each vertex at most one edge can be distinguished with a $(*)$, 
as in Figure~\ref{fig:vertices}. Note that a complete flattening of $D$ never has marked 
edges, but we will need the latter for webs that show up in the proof of invariance under 
the third Reidemeister move.  
\begin{figure}[h!]
$$\xymatrix@R=1.2mm{
\figins{0}{0.32}{vertexout} &
\figins{0}{0.32}{vertexin-star-l} &
\figins{0}{0.32}{vertexin-star-r}  \\
\figins{0}{0.32}{vertexin}  &
\figins{0}{0.32}{vertexout-star-l} &
\figins{0}{0.32}{vertexout-star-r}
}$$
\caption{Vertices}
\label{fig:vertices}
\end{figure}
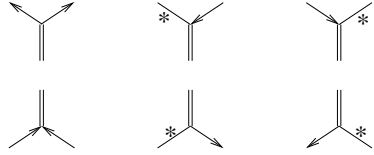

Simple edges correspond to edges labelled 1, double edges to edges labelled 2 and 
marked simple edges to edges labelled 3 in~\cite{MOY}, 
where edges carry labels from 1 to $N-1$ and label $j$ is associated to the $j$-th exterior power of the 
fundamental representation of $\mathfrak{sl}(N)$.

We call a planar trivalent graph generated by the vertices and edges defined above a \emph{web}. 
Webs can contain closed plane loops (simple, double or marked). The \emph{MOY web moves} 
in Figure~\ref{fig:moy} provide a recursive way of assigning to each web $\Gamma$ that only contains 
simple and double edges a polynomial in 
$\bZ[q,q^{-1}]$ with positive coefficients, which we call $P_N(\Gamma)$. There are more general 
web moves, which allow for the evaluation of arbitrary webs, but we do not need them here. Note that 
a complete flattening of a link diagram only contains simple and double edges. 

\begin{figure}[h]
$$\bigcirc=[N],\quad
\figins{-3.1}{0.16}{dble-circ}=\qbin N 2$$

$$
\figins{-8}{0.3}{digon-up} = [2]\ 
\figins{-8}{0.3}{dbedge-up}\ , \qquad
\figins{-8}{0.3}{dbedge-dig} =[N-1]\ 
\figins{-8}{0.3}{edge-up}
$$

$$
\figins{-8}{0.3}{square} =
\figins{-8}{0.3}{twoedges-lr} + [N-2]\ 
\figins{-8}{0.3}{twoedges-ud}
$$

$$
\figins{-17}{0.55}{moy5-1} +
\figins{-17}{0.55}{moy5-21} =
\figins{-17}{0.55}{moy5-2}+
\figins{-17}{0.55}{moy5-11}
$$
\caption{MOY web moves}
\label{fig:moy}
\end{figure}

Consistency of the relations in Figure~\ref{fig:moy} is shown in~\cite{MOY}.

Finally let us define the $\mathfrak{sl}(N)$ link polynomial. For any $i$ let $\Gamma_i$ denote a 
complete flattening of $D$. Then 
$$P_N(D)=(-1)^{n_-}q^{(N-1)n_+ - Nn_-}\sum_iq^{|i|}P_N(\Gamma_i),$$
where $|i|$ is the number of 1-flattenings in $\Gamma_i$, the sum being over all possible flattenings of $D$. 


\section{Schur polynomials and the cohomology of partial flag varieties}
\label{sec:partial flags}
In this section we recall some basic facts about Schur polynomials and 
the cohomology of partial flag varieties which we need in the rest of 
this paper.  
\subsection{Schur polynomials}
\label{sec:Schur}
A nice basis for homogeneous symmetric polynomials is given by the Schur 
polynomials. If $\lambda=(\lambda_1,\ldots,\lambda_k)$ is a partition such that $\lambda_1\ge\ldots\ge \lambda_k\ge 0$, then the Schur polynomial $\pi_{\lambda}(x_1,\ldots,x_k)$ is given by the following expression:
\begin{equation}
\pi_{\lambda}(x_1,\ldots,x_k)=\frac{|x_i^{\lambda_j+k-j}|}{\Delta},
\label{sur}
\end{equation}
where $\Delta=\prod_{i<j}(x_i-x_j)$, and by $|x_i^{\lambda_j+k-j}|$, we have denoted the determinant of the $k\times k$ matrix whose $(i,j)$ entry is equal to $x_i^{\lambda_j+k-j}$. Note that the elementary symmetric polynomials 
are given by $\pi_{1,0,0,\ldots,0}, \pi_{1,1,0,\ldots,0},\ldots,  
\pi_{1,1,1,\ldots,1}$. There are multiplication rules for the Schur polynomials which show that 
any $\pi_{\lambda_1,\lambda_2,\ldots,\lambda_k}$ can be expressed in terms of the 
elementary symmetric polynomials. 

If we do not specify the variables of the Schur polynomial $\pi_{\lambda}$, we will assume that these are exactly $x_1,\ldots,x_k$, with $k$ being the length of $\lambda$, i.e. $$\pi_{\lambda_1,\ldots,\lambda_k}:=\pi_{\lambda_1,\ldots,\lambda_k}(x_1,\ldots,x_k).$$ 

In this paper we only use Schur polynomials of two and 
three variables. In the case of two variables, the Schur polynomials are 
indexed by pairs of nonnegative integers $(i,j)$, such that $i\ge j$, and~(\ref{sur}) becomes
$$\pi_{i,j}=\sum_{\ell =j}^i{x_1^\ell x_2^{i+j-\ell}}.$$
Directly from {\em Pieri's formula} we obtain the following multiplication rule for the Schur polynomials in two variables:
\begin{equation}
\pi_{i,j}\pi_{a,b}=\sum {\pi_{x,y}},
\label{mnoz2}
\end{equation}
where the sum on the r.h.s. is over all indices $x$ and $y$ such that $x+y=i+j+a+b$ and $a+i\ge x\ge \max(a+j,b+i)$. Note that this implies $\min(a+j,b+i)\ge y\ge b+j$. Also, we shall write $\pi_{x,y}\in\pi_{i,j}\pi_{a,b}$ if $\pi_{x,y}$ belongs to the sum on the r.h.s. of (\ref{mnoz2}). Hence, we have that $\pi_{x,x}\in\pi_{i,j}\pi_{a,b}$ iff $a+j=b+i=x$ and $\pi_{x+1,x}\in\pi_{i,j}\pi_{a,b}$ iff $a+j=x+1$, $b+i=x$ or $a+j=x$, $b+i=x+1$.

We shall need the following combinatorial result which expresses the 
Schur polynomial in three variables as a combination of Schur polynomials 
of two variables.

For $i\ge j\ge k \ge 0$, and the triple $(a,b,c)$ of nonnegative integers, we define
$$(a,b,c)\sqsubset(i,j,k),$$
if $a+b+c=i+j+k$, $i\ge a \ge j$ and $j\ge b \ge k$. We note that this implies that $i\ge c\ge k$, and hence $\max\{a,b,c\}\le i$.
\begin{lem}\label{lem1}
$$\pi_{i,j,k}(x_1,x_2,x_3)=\sum_{(a,b,c)\sqsubset(i,j,k)}{\pi_{a,b}(x_1,x_2)x_3^c}.$$
\end{lem}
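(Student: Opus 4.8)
The plan is to prove the identity by expanding the definition of the three-variable Schur polynomial as a ratio of determinants and performing a Laplace expansion along the row involving $x_3$. Write $\pi_{i,j,k}(x_1,x_2,x_3) = |x_m^{\lambda_n + 3 - n}| / \Delta(x_1,x_2,x_3)$, where $\lambda = (i,j,k)$ and the numerator is the $3\times 3$ determinant with rows indexed by the variable $x_m$ and columns carrying the exponents $(i+2, j+1, k)$. First I would expand this determinant along the third row (the $x_3$ row), obtaining a signed sum of $x_3^e$ (for $e \in \{i+2, j+1, k\}$) times a $2\times 2$ minor in $x_1, x_2$. The key arithmetic point is that $\Delta(x_1,x_2,x_3) = (x_1 - x_2)(x_1 - x_3)(x_2 - x_3)$, and I would like to split off the factor $(x_1 - x_2)$ to reconstruct two-variable Schur polynomials $\pi_{a,b}(x_1,x_2) = |x_m^{\mu_n + 2 - n}|/(x_1 - x_2)$ from the $2\times 2$ minors.

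The main step is therefore to reorganize the expansion so that the remaining denominator factor $(x_1 - x_3)(x_2 - x_3)$ is absorbed. The natural way to do this is to recognize that summing the three terms coming from the Laplace expansion, each of which is a two-variable Schur polynomial times a power of $x_3$ divided by $(x_1-x_3)(x_2-x_3)$, must produce a polynomial; one then identifies this polynomial directly. A cleaner route, which I would pursue, is to use the dual Jacobi–Trudi / branching description: the restriction of the $GL_3$ character $s_{(i,j,k)}$ to $GL_2 \times GL_1$ is governed by the Gelfand–Tsetlin / interlacing rule, which says precisely that $s_{(i,j,k)}(x_1,x_2,x_3) = \sum s_{(a,b)}(x_1,x_2)\, x_3^{c}$ summed over all $(a,b)$ with $i \ge a \ge j \ge b \ge k$ and $c = i+j+k-a-b$. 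Since the condition $i \ge a \ge j$, $j \ge b \ge k$, $a+b+c = i+j+k$ is exactly the definition of $(a,b,c)\sqsubset(i,j,k)$ given just above the lemma, this yields the claim; the parenthetical remark that $i \ge c \ge k$ is an immediate consequence of $a \le i$, $b \le j$ (giving $c \ge k$) and $a \ge j$, $b \ge k$ (giving $c \le i$).

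To keep the argument self-contained I would actually carry out the determinant computation rather than cite the branching rule: expand $|x_m^{\lambda_n+3-n}|$ along the $x_3$-row, factor $(x_1-x_2)$ out of each $2\times 2$ minor to get $\sum_{e} \pm x_3^{e}\, \pi_{\cdot,\cdot}(x_1,x_2)\,(x_1-x_2)$, and then divide by $\Delta = (x_1-x_2)(x_1-x_3)(x_2-x_3)$, leaving $\sum_e \pm x_3^{e}\,\pi_{\cdot,\cdot}(x_1,x_2)/((x_1-x_3)(x_2-x_3))$. The remaining work is the finite geometric-series identity $\sum$ over the interlacing range, i.e. verifying that for fixed exponents the alternating combination of the three terms telescopes to exactly the stated sum over $(a,b,c)\sqsubset(i,j,k)$; this is the one genuinely computational point, and it is where I expect the bookkeeping of signs and index ranges to be the main obstacle. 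Everything else is formal manipulation of determinants.
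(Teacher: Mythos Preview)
Your proposal is correct, and in spirit it is the same proof as the paper's---a direct manipulation of the determinantal definition of $\pi_{i,j,k}$---but you have reversed the order in which the denominator factors are absorbed, and that makes a practical difference.

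The paper first subtracts the $x_3$-row from the $x_1$- and $x_2$-rows; this lets one factor out $(x_1-x_3)$ and $(x_2-x_3)$ immediately, via the finite geometric series $x^n-y^n=(x-y)\sum x^m y^{n-1-m}$, leaving a $2\times 2$ determinant divided only by $(x_1-x_2)$. Expanding that determinant gives a double sum each of whose terms is already of the form $\pi_{a,b}(x_1,x_2)x_3^c$ after a trivial reindexing; the only work left is to observe that part of the sum cancels by antisymmetry and the surviving range is exactly $(a,b,c)\sqsubset(i,j,k)$.

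Your route---Laplace along the $x_3$-row first---instead absorbs $(x_1-x_2)$ immediately and produces the three-term alternating expression $x_3^{i+2}\pi_{j,k}-x_3^{j+1}\pi_{i+1,k}+x_3^{k}\pi_{i+1,j}$, which still has to be divided by $(x_1-x_3)(x_2-x_3)$. That division is precisely the ``one genuinely computational point'' you flag, and carrying it out cleanly amounts to reproving the two-step Gelfand--Tsetlin interlacing from scratch. It can be done, but the bookkeeping is heavier than in the paper's order, where the interlacing range falls out automatically from the geometric-series indices. Your alternative of simply citing the $GL_3\to GL_2\times GL_1$ branching rule is of course the cleanest of all, and is exactly what this lemma is; the paper presumably includes the explicit computation to keep things self-contained.
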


\begin{proof}

From the definition of the Schur polynomial, we have
$$
\pi_{i,j,k}(x_1,x_2,x_3)={\frac{(x_1x_2x_3)^k}{(x_1-x_2)(x_1-x_3)(x_2-x_3)}}\left|
\begin{array}{ccc}
x_1^{i-k+2} & x_1^{j-k+1} & 1 \\
x_2^{i-k+2} & x_2^{j-k+1} & 1 \\
x_3^{i-k+2} & x_3^{j-k+1} & 1
\end{array}
\right|.$$
After subtracting the last row from the first and the second one of the last determinant, we obtain
$$
\pi_{i,j,k}={\frac{(x_1x_2x_3)^k}{(x_1-x_2)(x_1-x_3)(x_2-x_3)}}\left|
\begin{array}{cc}
x_1^{i-k+2}-x_3^{i-k+2} & x_1^{j-k+1}-x_3^{j-k+1} \\
x_2^{i-k+2}-x_3^{i-k+2} & x_2^{j-k+1}-x_3^{j-k+1} 
\end{array}
\right|,$$
and so
$$\pi_{i,j,k}={\frac{(x_1x_2x_3)^k}{x_1-x_2}}\left|
\begin{array}{cc}
\sum_{m=0}^{i-k+1} x_1^{m}x_3^{i-k+1-m} & \sum_{n=0}^{j-k} x_1^{n}x_3^{j-k-n}\\
\sum_{m=0}^{i-k+1} x_2^{m}x_3^{i-k+1-m} & \sum_{n=0}^{j-k} x_2^{n}x_3^{j-k+n}
\end{array}
\right|.$$
Finally, after expanding the last determinant we obtain
\begin{equation}
\pi_{i,j,k}=\frac{(x_1x_2x_3)^k}{x_1-x_2}\sum_{m=0}^{i-k+1}\sum_{n=0}^{j-k}{(x_1^mx_2^n-x_1^nx_2^m)x_3^{i+j-2k+1-m-n}}.
\label{pf1}
\end{equation}
We split the last double sum into two: the first one when 
$m$ goes from $0$ to $j-k$, denoted by $S_1$, and the other 
one when $m$ goes from $j-k+1$ to $i-k+1$, denoted by $S_2$. 
To show that $S_1=0$, we split the double sum further into 
three parts: when $m<n$, $m=n$ and $m>n$. Obviously, each 
summand with $m=n$ is equal to $0$, while the summands of the sum for $m<n$ are exactly the opposite of the summands of the sum for $m>n$. Thus, by replacing only $S_2$ instead of the double sum in (\ref{pf1}) and after rescaling the indices $a=m+k-1$, $b=n+k$, we get
\begin{eqnarray*}
\pi_{i,j,k}&=&\frac{(x_1x_2x_3)^k}{x_1-x_2}\sum_{m=j-k+1}^{i-k+1}\sum_{n=0}^{j-k}{(x_1^mx_2^n-x_1^nx_2^m)x_3^{i+j-2k+1-m-n}} \\
&=&\sum_{a=j}^i\sum_{b=k}^j{\pi_{a,b}x_3^{i+j+k-a-b}}=\sum_{(a,b,c)\sqsubset(i,j,k)}\pi_{a,b}x_3^c,
\end{eqnarray*}
as wanted.  
\end{proof}

Of course there is a multiplication rule for three-variable Schur 
polynomials which is compatible with (\ref{mnoz2}) and the lemma above, 
but we do not want to discuss it here. For details see~\cite{Fulton-Harris}.

\subsection{The cohomology of partial flag varieties}

In this paper the rational cohomology rings of partial flag varieties 
play an essential role. The partial flag variety $Fl_{d_1,d_2,\ldots,d_l}$, 
for $1\le d_1<d_2<\ldots<d_l=N$, is defined by 
$$Fl_{d_1,d_2,\ldots,d_l}=\{V_{d_1}\subset V_{d_2}\subset\ldots\subset 
V_{d_l}=\bC^N|\dim(V_i)=i\}.$$
A special case is $Fl_{k,N}$, the Grassmanian 
variety of all $k$-planes in $\bC^N$, also denoted $\G_{k,N}$. The dimension of the partial 
flag variety is given by
$$\dim Fl_{d_1,d_2,\ldots,d_l}=N^2-\sum_{i=1}^{l-1}(d_{i+1}-d_{i})^2-d_1^2.$$
The rational cohomology rings of the partial flag varieties are well known 
and we only recall those facts that we need in this paper. 
\begin{lem}
$H^*(\G_{k,N})$ is isomorphic to the vector space generated by 
all $\pi_{i_1,i_2,\ldots,i_k}$ modulo the relations    
\begin{equation}
\pi_{N-k+1,0,\ldots,0}=0,\quad\pi_{N-k+2,0,\ldots,0}=0,\quad \ldots\quad,\
 \pi_{N,0,\ldots,0}=0,
\label{gras}
\end{equation}
where there are exactly $k-1$ zeros in the multi-indices of the Schur 
polynomials. 
\end{lem}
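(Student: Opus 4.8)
The plan is to exhibit an explicit isomorphism using the standard presentation of $H^*(\G_{k,N})$ via the Borel description. Recall that $H^*(\G_{k,N})\cong \bQ[x_1,\ldots,x_k]^{S_k}/I$, where the invariant ring is freely generated by the elementary symmetric polynomials $e_1,\ldots,e_k$ in the Chern roots of the tautological $k$-plane bundle, and $I$ is the ideal generated by the components $h_{N-k+1},\ldots,h_N$ of the total Segre class (equivalently, the relations saying that the complementary $(N-k)$-bundle has vanishing Chern classes above degree $N-k$). Since $\pi_{1,0,\ldots,0}=e_1,\ \pi_{1,1,0,\ldots,0}=e_2,\ \ldots,\ \pi_{1,\ldots,1}=e_k$, the multiplication rules for Schur polynomials show that the map sending $\pi_{i_1,\ldots,i_k}$ to the corresponding Schur polynomial in $x_1,\ldots,x_k$ identifies the vector space spanned by all $\pi_{i_1,\ldots,i_k}$ with $\bQ[x_1,\ldots,x_k]^{S_k}$. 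So the only thing to check is that under this identification the ideal of relations $(\ref{gras})$ corresponds exactly to $I$.

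The key step is therefore the identity $\pi_{N-k+j,0,\ldots,0}=h_{N-k+j}(x_1,\ldots,x_k)$ for $j=1,\ldots,k$, i.e. that the one-row Schur polynomial $\pi_{m,0,\ldots,0}$ equals the complete homogeneous symmetric polynomial $h_m$; this is immediate from the Jacobi–Trudi formula (or from $(\ref{sur})$ directly). Thus relations $(\ref{gras})$ say precisely $h_{N-k+1}=\cdots=h_N=0$. It remains to recall why these $k$ complete symmetric polynomials generate the full ideal $I$: by the fundamental relation $\sum_{i+j=m}(-1)^i e_i h_{m-i}=0$ valid for all $m\geq 1$ (with $e_0=h_0=1$), knowing $h_{N-k+1}=\cdots=h_N=0$ together with $e_i=0$ for $i>k$ forces $h_m=0$ for all $m>N-k$ by downward-then-upward induction; conversely the $h_m$ for $m>N-k$ all lie in the ideal generated by $h_{N-k+1},\ldots,h_N$. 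This matches the geometric description of $I$ as the ideal making the Chern classes of the quotient bundle vanish in degrees $>N-k$.

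Concretely I would organize the argument as: (i) set up the ring isomorphism $H^*(\G_{k,N})\cong \bQ[e_1,\ldots,e_k]/(h_{N-k+1},\ldots,h_N)$ from the Leray–Hirsch/Borel presentation, citing the standard reference; (ii) observe that Schur polynomials in $k$ variables form a basis of $\bQ[x_1,\ldots,x_k]^{S_k}$ and that the elementary symmetric polynomials are among them, as noted in Section~\ref{sec:Schur}; (iii) prove $\pi_{m,0,\ldots,0}=h_m$ via Jacobi–Trudi; (iv) conclude that the quotient of the span of all $\pi_{i_1,\ldots,i_k}$ by $(\ref{gras})$ is isomorphic as a graded vector space (indeed as a ring) to $H^*(\G_{k,N})$.

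The main obstacle is purely expository rather than mathematical: one must be careful that the relations $(\ref{gras})$, which are stated only for the $k$ single-row partitions $\pi_{N-k+1,0,\ldots,0},\ldots,\pi_{N,0,\ldots,0}$, really do cut down the infinite-dimensional space of all Schur polynomials to the correct $\binom{N}{k}$-dimensional space. This is exactly the statement that the ideal generated by $h_{N-k+1},\ldots,h_N$ in $\bQ[e_1,\ldots,e_k]$ has the right Hilbert series, which follows from the regular-sequence property of $(h_{N-k+1},\ldots,h_N)$ in $\bQ[e_1,\ldots,e_k]$ (a Koszul-complex / dimension count). I would either invoke this standard fact about the cohomology of Grassmannians directly from \cite{Fulton-Harris} or spell out the short inductive argument above showing that modulo $(\ref{gras})$ every $\pi_{i_1,\ldots,i_k}$ with $i_1>N-k$ vanishes, leaving precisely the Schur polynomials indexed by partitions inside the $k\times(N-k)$ box.
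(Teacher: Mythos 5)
The paper does not actually prove this lemma: it is introduced in Section~3.2 under the heading ``The rational cohomology rings of the partial flag varieties are well known and we only recall those facts that we need in this paper,'' so it is stated as a recalled standard fact with no argument supplied. Your proposal is therefore supplying a proof where the authors chose to cite folklore, and your argument is the standard one and is correct. The two hinges you identify are exactly right: the Borel/Chern-class presentation $H^*(\G_{k,N})\cong \bQ[e_1,\ldots,e_k]/(h_{N-k+1},\ldots,h_N)$ coming from $c(S)c(Q)=1$, and the identity $\pi_{m,0,\ldots,0}=h_m$ (Jacobi--Trudi, or read directly off the bialternant formula $(\ref{sur})$ with $\lambda=(m,0,\ldots,0)$). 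You also correctly flag the one point that a careless reader might overlook, namely that imposing only the $k$ relations $h_{N-k+1}=\cdots=h_N=0$ forces all $h_m=0$ for $m>N-k$ via the recursion $\sum_{i=0}^{k}(-1)^i e_i h_{m-i}=0$, so that the quotient has the expected dimension $\binom{N}{k}$ and coincides with the geometric ideal; this is the content of the regular-sequence fact you invoke. In short: the proposal is a correct and essentially optimal proof of a statement the paper leaves as a citation, and there is nothing in the paper to compare it against beyond the reference to standard sources such as \cite{Fulton-Harris}.
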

\noindent A consequence of the multiplication rules for Schur polynomials is 
that
\begin{cor} The Schur polynomials $\pi_{i_1,i_2,\ldots,i_k}$, 
for $N-k\geq i_1\geq i_2\geq \ldots\geq i_k\geq 0$, form a basis 
of $H^*(\G_{k,N})$
\end{cor}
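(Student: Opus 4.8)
Both assertions --- that these Schur polynomials span $H^*(\G_{k,N})$ and that they are linearly independent there --- I would read off the presentation supplied by the Lemma. That Lemma identifies $H^*(\G_{k,N})$ with the quotient of the $\mathbb{Q}$-vector space $\Lambda_k$ of symmetric polynomials in $x_1,\ldots,x_k$ by the subspace $I$ spanned by all products $q\cdot\pi_{N-k+j,0,\ldots,0}$ with $q\in\Lambda_k$ and $1\le j\le k$; since $\pi_{r,0,\ldots,0}$ (with $k-1$ trailing zeros) is exactly the complete homogeneous symmetric polynomial $h_r$ in $x_1,\ldots,x_k$, this $I$ is the ideal $(h_{N-k+1},\ldots,h_N)$. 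Recalling that $\{\pi_\lambda:\ell(\lambda)\le k\}$ is a $\mathbb{Q}$-basis of $\Lambda_k$, the Corollary reduces to the single claim that $I$ equals the subspace spanned by those $\pi_\lambda$ with $\ell(\lambda)\le k$ and $\lambda_1\ge N-k+1$. Granting this, the images of the complementary basis vectors $\{\pi_\lambda:\ell(\lambda)\le k,\ \lambda_1\le N-k\}$ form a basis of $\Lambda_k/I$, and this family is precisely $\{\pi_{i_1,\ldots,i_k}: N-k\ge i_1\ge\ldots\ge i_k\ge 0\}$.

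I would prove the claim in three steps, using only standard symmetric-function identities (see~\cite{Fulton-Harris}). Step~1: $h_m\in I$ for every $m\ge N-k+1$. For $N-k+1\le m\le N$ this is one of the defining generators; for $m\ge N+1$ the identity $\sum_{i=0}^{k}(-1)^i e_i h_{m-i}=0$, valid in $k$ variables (with $e_i$ the $i$-th elementary symmetric polynomial, so $e_i=0$ for $i>k$), expresses $h_m$ as a $\Lambda_k$-linear combination of $h_{m-1},\ldots,h_{m-k}$, whose indices all lie in $[N-k+1,m-1]$, so induction on $m$ closes the step. Step~2: $\pi_\lambda\in I$ whenever $\ell(\lambda)\le k$ and $\lambda_1\ge N-k+1$. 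By the Jacobi--Trudi determinant $\pi_\lambda=\det\!\big(h_{\lambda_i-i+j}\big)_{1\le i,j\le k}$ the entries of the first row are $h_{\lambda_1},h_{\lambda_1+1},\ldots,h_{\lambda_1+k-1}$, each of index $\ge N-k+1$ and hence in $I$ by Step~1; expanding the determinant along its first row gives $\pi_\lambda\in I$. Step~3: $I$ is contained in the span of the $\pi_\nu$ with $\ell(\nu)\le k$ and $\nu_1\ge N-k+1$. Since $I$ is spanned by the products $\pi_\mu h_r$ with $\ell(\mu)\le k$ and $N-k+1\le r\le N$, it is enough to treat one such product; Pieri's rule gives $\pi_\mu h_r=\sum_\nu\pi_\nu$ over partitions $\nu\supseteq\mu$ with $\nu/\mu$ a horizontal strip of size $r$, and in $k$ variables only the $\nu$ with $\ell(\nu)\le k$ contribute (the rest yield $\pi_\nu=0$). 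The horizontal-strip inequalities $\nu_{i+1}\le\mu_i$ force $\nu_2+\nu_3+\cdots\le|\mu|$, whence $\nu_1\ge|\nu|-|\mu|=r\ge N-k+1$; thus each surviving $\pi_\nu$ lies in the asserted span. Combining the three steps gives the claim, and with it the Corollary.

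I do not expect a conceptual obstacle --- the statement is essentially the standard description of the Schubert basis of $H^*(\G_{k,N})$ --- so the delicate part is the index bookkeeping, above all Step~1: one must enlarge the finite set of given relations $h_{N-k+1},\ldots,h_N$ to the infinite family $\{h_m:m\ge N-k+1\}$, which is exactly what makes $I$ big enough for Steps~2 and~3 to bite, and one must keep in mind throughout that the Schur polynomials are taken in exactly $k$ variables, so that $\pi_\nu$ vanishes once $\ell(\nu)>k$ (this is what confines Pieri's rule to $\Lambda_k$ in Step~3). If one prefers to avoid the Jacobi--Trudi step, it is enough to prove only the inclusion of Step~2 --- which already shows that the listed Schur polynomials span $H^*(\G_{k,N})$ --- and then to invoke the classical fact that the Poincar\'e polynomial of $\G_{k,N}$ is the Gaussian binomial $\qbin{N}{k}$, so that $\dim_{\mathbb{Q}}H^*(\G_{k,N})=\binom{N}{k}$; since this equals the number of sequences $N-k\ge i_1\ge\ldots\ge i_k\ge 0$, a spanning family of that cardinality in a space of that dimension is automatically a basis.
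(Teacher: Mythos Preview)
Your proof is correct. The paper does not actually give a proof of this Corollary; it simply introduces it with the phrase ``A consequence of the multiplication rules for Schur polynomials is that'' and then states the result, treating it as standard background. Your argument makes explicit what the paper leaves implicit: Step~3 uses Pieri's rule (precisely one of the ``multiplication rules'' the paper alludes to), while Steps~1 and~2 supply the complementary containment via the $e$--$h$ recurrence and Jacobi--Trudi. So your write-up is not so much a different route as a fully fleshed-out version of what the paper takes for granted; the alternative dimension-count finish you sketch at the end (spanning plus $\dim H^*(\G_{k,N})=\binom{N}{k}$) is also the shortcut the paper uses implicitly in the sentence immediately following the Corollary.
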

\noindent Thus, the dimension of $\G_{k,N}$ is $N\choose k$, and up to a 
degree shift, 
its quantum dimension (or graded dimension) is $\left[{N\atop k}\right]$.

Another consequence of the multiplication rules is that 
\begin{cor} The Schur polynomials $\pi_{1,0,0,\ldots,0}, 
\pi_{1,1,0,\ldots,0},\ldots,\pi_{1,1,1,\ldots,1}$ (the elementary 
symmetric polynomials) generate $H^*(\G_{k,N})$ as a ring. 
\end{cor}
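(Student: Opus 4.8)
The statement follows almost at once from the two facts already established in this subsection. The plan is: by the first lemma above, $H^{*}(\G_{k,N})$ is spanned as a $\mathbb{Q}$-vector space by the classes of all Schur polynomials $\pi_{i_{1},\ldots,i_{k}}$; and, as recalled in Section~\ref{sec:Schur}, the multiplication rules for Schur polynomials allow every such $\pi_{i_{1},\ldots,i_{k}}$ to be rewritten as a polynomial in the elementary symmetric polynomials $\pi_{1,0,\ldots,0},\pi_{1,1,0,\ldots,0},\ldots,\pi_{1,1,\ldots,1}$. Combining the two, every additive generator of $H^{*}(\G_{k,N})$ lies in the subring generated by these $k$ classes, so they generate $H^{*}(\G_{k,N})$ as a ring, which is exactly the assertion.

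If one prefers the middle step spelled out rather than quoted, I would argue by induction on the partition $\lambda=(i_{1},\ldots,i_{k})$ with respect to size and then dominance order. Writing $\lambda'$ for the conjugate partition and $e_{j}=\pi_{1,\ldots,1}$ ($j$ ones), the dual Jacobi--Trudi identity — equivalently, iterated use of Pieri's rule~(\ref{mnoz2}) and its higher-variable analogue — gives $e_{\lambda'_{1}}e_{\lambda'_{2}}\cdots=\pi_{\lambda}+\sum c_{\mu}\pi_{\mu}$ with every $\mu$ strictly below $\lambda$ in dominance order; partitions $\mu$ with more than $k$ parts contribute $\pi_{\mu}=0$ in $k$ variables and cause no harm. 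Solving for $\pi_{\lambda}$ and applying the induction hypothesis to the remaining $\pi_{\mu}$ proves the claim. Since $\lambda$ has at most $k$ parts, $\lambda'$ has all parts $\le k$, so only $e_{1},\ldots,e_{k}$ ever occur — precisely the list $\pi_{1,0,\ldots,0},\ldots,\pi_{1,1,\ldots,1}$ in the statement (recall $e_{j}=0$ in $k$ variables for $j>k$). Alternatively, one observes that the relations~(\ref{gras}) are relations among symmetric polynomials, hence $H^{*}(\G_{k,N})$ is a ring quotient of $\mathbb{Q}[x_{1},\ldots,x_{k}]^{S_{k}}=\mathbb{Q}[e_{1},\ldots,e_{k}]$ by the fundamental theorem of symmetric polynomials, and the conclusion is immediate.

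There is essentially no obstacle here: the only point needing any care is the bookkeeping in the inductive step — checking the triangularity with respect to dominance and noting that terms indexed by partitions with more than $k$ parts vanish rather than escaping the span — and even this can be avoided entirely by citing the multiplication rules already recalled in Section~\ref{sec:Schur}.
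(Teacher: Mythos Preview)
Your proposal is correct and matches the paper's own approach: the paper simply records this corollary as ``another consequence of the multiplication rules'' without further argument, and your first paragraph is exactly that observation. The additional detail you supply (the dominance-triangularity argument via dual Jacobi--Trudi, or the appeal to the fundamental theorem of symmetric polynomials) goes beyond what the paper writes out but is entirely in the same spirit and perfectly sound.
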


Furthermore, we can introduce a non-degenerate trace form on 
$H^*(\G_{k,N})$ 
by giving its values on the basis elements 
\begin{equation}
\epsilon(\pi_{\lambda})=\left\{\begin{array}{ll}
(-1)^{\lfloor\frac{k}{2}\rfloor}, \quad\lambda=(N-k,\ldots,N-k)\\
0, \quad\textrm{else} 
\end{array}\right.
.\label{trag}
\end{equation}
This makes $H^*(\G_{k,N})$ into a commutative Frobenius algebra. 
One can compute the basis dual to $\left\{\pi_{\lambda}\right\}$ in 
$H^*(\G_{k,N})$, 
with respect to $\epsilon$. It is given by    
\begin{equation}
\label{eq:db}
\hat{\pi}_{\lambda_1,\ldots,\lambda_k}=(-1)^{\lfloor\frac{k}{2}\rfloor}
\pi_{N-k-\lambda_k,\ldots,N-k-\lambda_1}.
\end{equation}

We can also express the cohomology rings of the partial 
flag varieties $Fl_{1,2,N}$ and $Fl_{2,3,N}$ in terms of Schur polynomials. 
Indeed, we have 
$$H^*(Fl_{1,2,N})=\bQ[x_1,x_2]/\langle \pi_{N-1,0},\pi_{N,0}\rangle,$$
$$H^*(Fl_{2,3,N})=\bQ[x_1+x_2,x_1x_2,x_3]/\langle \pi_{N-2,0,0},
\pi_{N-1,0,0},\pi_{N,0,0}\rangle.$$

The natural projection map $p_1:Fl_{1,2,N}\to\G_{2,N}$ induces 
$$p^*_1:H^*(\G_{2,N})\to H^*(Fl_{1,2,N}),$$
which is just the inclusion of the polynomial rings. Analogously, 
the natural 
projection $p_2:Fl_{2,3,N}\to\G_{3,N}$, 
induces 
$$p^*_2:H^*(\G_{3,N})\to H^*(Fl_{2,3,N}),$$
which is also given by the inclusion of the polynomial rings.

\section{Pre-foams}
\label{sec:pre-foam}

In this section we begin to define the foams we will work with. 
The philosophy behind these foams will be explained in 
section \ref{sec:KL}. 
To categorify the $\mathfrak{sl}(N)$ link polynomial we need singular 
cobordisms with two types of singularities. The basic examples are given in 
Figure~\ref{fig:elemfoams}. These foams are composed of three types of facets: simple, double and 
triple facets. The double facets are coloured and the triple facets are marked 
to show the difference.
\begin{figure}[h]
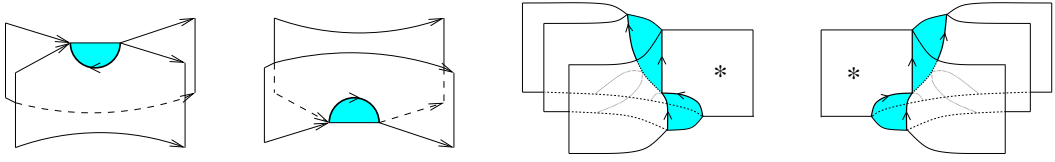

\figins{0}{0.7}{ssaddle} \qquad
\figins{0}{0.7}{ssaddle_ud} \qquad
\figins{-2}{0.8}{glue-left} \qquad
\figins{-2}{0.8}{glue-right}
\caption{Some elementary foams}
\label{fig:elemfoams}
\end{figure}
Intersecting such a foam with a plane results in a web, as long as the plane avoids the 
singularities where six facets meet, such as on the right in Figure~\ref{fig:elemfoams}. 

We adapt the definition of a world-sheet foam given in~\cite{Rozansky} to our setting.

\begin{defn}
\label{defn:pre-foam}
Let $\sk$ be a finite closed oriented $4$-valent graph, 
which may contain disjoint circles. We assume 
that all edges of $\sk$ are oriented.  
A cycle in $\sk$ is defined to be a circle or a closed sequence of edges which form a 
piece-wise linear circle. 
Let $\Sigma$ be a compact orientable possibly disconnected surface, 
whose connected components are white, coloured or marked, also denoted by simple, double or 
triple. Each component can have a boundary consisting of several disjoint circles and can have 
additional decorations which we discuss below.     
A closed {\em pre-foam} $u$ is the identification space $\Sigma/\sk$ obtained by glueing 
boundary circles of $\Sigma$ to cycles in $\sk$ such that every edge and circle in $\sk$ is glued 
to exactly three boundary circles of $\Sigma$ and such that for any point $p\in \sk$:    
\begin{enumerate}
\item if $p$ is an interior point of an edge, then $p$ has a neighborhood homeomorphic to the 
letter Y times an interval with exactly one of the facets being double, and at most one of them 
being triple. For an example see 
Figure~\ref{fig:elemfoams};
\item if $p$ is a vertex of $\sk$, then it has a neighborhood as shown in 
Figure~\ref{fig:elemfoams}. 
\end{enumerate}
We call $\sk$ the \emph{singular graph}, its edges and vertices \emph{singular arcs} and 
\emph{singular vertices}, and the connected components of $u - \sk$ the \emph{facets}.

Furthermore the facets can be decorated with dots. A simple facet can only have black 
dots ($\bdot$), a double facet can also have white dots ($\wdot$), and a triple facet besides 
black and white dots can have double dots ($\bwdot$). Dots can move freely on a facet 
but are not allowed to cross singular arcs. 
See Figure~\ref{fig:simpl-2pol} for examples of pre-foams. 
\end{defn}

\begin{figure}[h]
$$\xymatrix@R=1mm{
 \figins{0}{0.6}{poly} &
 \figins{-34}{0.6}{sq_rem_b2} \\
a) & b)
}$$
\caption{a) A pre-foam b) An open pre-foam}
\label{fig:simpl-2pol}
\end{figure}
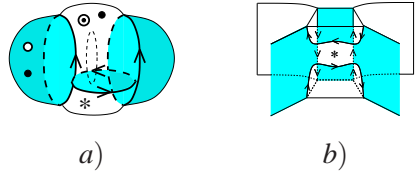

Note that the cycles to which the boundaries of the simple and the triple facets are glued are 
always oriented, whereas the ones to which the boundaries of the double facets are glued are not. 
Note also that there are two types of singular vertices. Given a singular vertex $v$, 
there are precisely two singular edges which meet at $v$ and bound a triple facet: one oriented 
toward $v$, denoted $e_1$, and one oriented away from $v$, denoted $e_2$. 
If we use the ``left hand rule'', then the cyclic ordering of the facets 
incident to $e_1$ and $e_2$ is either $(3,2,1)$ and $(3,1,2)$ respectively, or the other way 
around. We say that $v$ is of type I in the first case and of type II in the second case. 
When we go around a triple facet we see that there have to 
be as many singular vertices of 
type I as there are of type II for the cyclic orderings of the facets to match up. This shows 
that for a closed pre-foam the number of singular vertices of type I is equal to the number 
of singular vertices of type II.     

We can intersect a pre-foam $u$ generically by a plane $W$ in order to get a web, 
as long as the plane avoids the vertices of $\sk$. 
The orientation of $\sk$ determines the orientation of the simple edges of the web 
according to the convention in Figure~\ref{fig:orientations}.

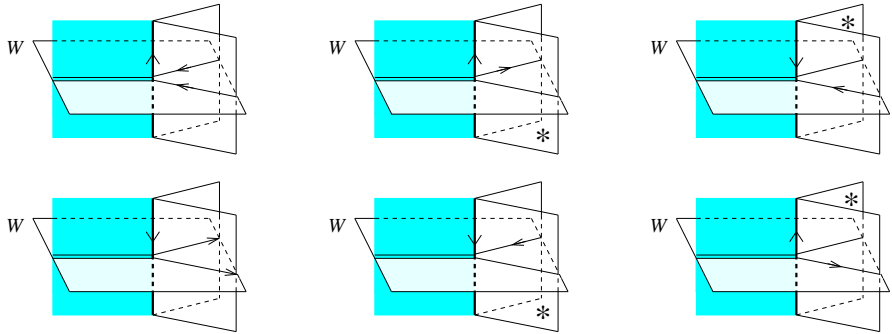
\begin{figure}[h]
$$\xymatrix@R=1mm{
\figins{0}{0.8}{orientation-in} & 
\figins{0}{0.8}{orientation-3in} &
\figins{0}{0.8}{orientation-3out2} \\
\figins{0}{0.8}{orientation-out}  & 
\figins{0}{0.8}{orientation-3out} &
\figins{0}{0.8}{orientation-3in2}
}$$
\caption{Orientations near a singular arc}
\label{fig:orientations}
\end{figure}

Suppose that for all but a finite number of values $i\in ]0,1[$, 
the plane $W\times {i}$ intersects $u$ generically. Suppose also that 
$W\times 0$ and $W\times 1$ intersect $u$ generically and outside the vertices 
of $\sk$. We call $W\times I\cap u$ an {\em open} pre-foam.   
Interpreted as morphisms we read open pre-foams from bottom to top, and their 
composition consists of placing one pre-foam on top of the other, as long as their boundaries 
are isotopic and the orientations of the simple edges coincide.

\begin{defn} Let $\PF$ be the category whose objects are 
closed webs and whose morphisms are $\bQ$-linear combinations of 
isotopy classes of pre-foams with the obvious identity pre-foams and 
composition rule.  
\end{defn}



We now define the $q$-degree of a pre-foam. 
Let $u$ be a pre-foam, $u_1$, $u_2$ and $u_3$ the disjoint union of its 
simple and double and marked facets respectively and 
$\sk(u)$ its singular graph. Define the partial $q$-gradings of $u$ as 
\begin{eqnarray*}
q_i(u)    &=& \chi(u_i)-\frac{1}{2}\chi(\partial u_i\cap\partial u), 
\qquad i=1,2,3 \\
q_{\sk}(u) &=& \chi(\sk(u))-\frac{1}{2}\chi(\partial \sk(u)).
\end{eqnarray*}
where $\chi$ is the Euler characteristic and $\partial$ denotes the boundary.

\begin{defn}
Let $u$ be a pre-foam with $d_\bdot$ dots of type $\bdot$, $d_\wdot$ dots of type 
$\wdot$ and $d_\bwdot$ dots of type $\bwdot$. The $q$-grading of $u$ is given 
by
\begin{equation}
q(u)= -\sum_{i=1}^{3}i(N-i)q_i(u) - 2(N-2)q_{\sk}(u) + 
2d_\bdot + 4d_\wdot +6d_\bwdot.
\end{equation}
\end{defn}

The following result is a direct consequence of the definitions. 
\begin{lem}
$q(u)$ is additive under the glueing of pre-foams.
\end{lem}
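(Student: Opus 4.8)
The plan is to verify additivity directly from the definitions, since each ingredient of $q(u)$ is built from Euler characteristics of pieces of the pre-foam and counts of dots, both of which behave well under gluing. Suppose $u = u' \cup u''$ is obtained by gluing two open pre-foams along a common boundary web $\Gamma$ (a collection of circles in the singular graph together with the facet-boundary circles attached to them). I would first record that the dot counts are trivially additive: $d_\bdot(u) = d_\bdot(u') + d_\bdot(u'')$ and similarly for $d_\wdot$ and $d_\bwdot$, since dots live in the interiors of facets and no dots sit on the gluing locus. So the whole problem reduces to showing that each partial grading $q_i$ ($i=1,2,3$) and $q_{\sk}$ is additive.

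Next I would treat $q_i(u) = \chi(u_i) - \tfrac12\chi(\partial u_i \cap \partial u)$. Let $\Gamma_i \subset \Gamma$ be the part of the gluing web consisting of edges/circles of the color corresponding to facet-type $i$. When we glue $u'$ to $u''$ along $\Gamma$, the type-$i$ facets are glued along $\Gamma_i$, which is a disjoint union of circles, so $\chi(\Gamma_i) = 0$ (every circle has Euler characteristic zero, and the ``circles'' here are genuine topological circles even when they are closed sequences of singular edges — being piecewise-linear circles they have $\chi = 0$). By inclusion–exclusion for Euler characteristic, $\chi(u_i) = \chi(u'_i) + \chi(u''_i) - \chi(\Gamma_i) = \chi(u'_i) + \chi(u''_i)$. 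For the boundary term: $\partial u_i \cap \partial u$ is the part of the outer boundary of $u$ lying on type-$i$ facets. The outer boundary of $u$ is the disjoint union of the outer boundaries of $u'$ and $u''$ (the part along $\Gamma$ is now interior), so $\partial u_i \cap \partial u = (\partial u'_i \cap \partial u') \sqcup (\partial u''_i \cap \partial u'')$, minus $\Gamma_i$, which again contributes $\chi = 0$. Hence $\chi(\partial u_i \cap \partial u) = \chi(\partial u'_i \cap \partial u') + \chi(\partial u''_i \cap \partial u'')$, and $q_i$ is additive. The identical argument with $\sk(u)$ in place of $u_i$ and $\Gamma$ (a $1$-complex glued along the circles $\Gamma$, again of $\chi = 0$) gives additivity of $q_{\sk}$.

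Finally I would assemble: since $q(u)$ is a fixed $\bZ$-linear combination of $q_1(u),\dots,q_3(u),q_{\sk}(u),d_\bdot,d_\wdot,d_\bwdot$, and each of these is additive under gluing, so is $q(u)$. The main (very mild) obstacle is bookkeeping rather than mathematics: one must be careful that the gluing locus is always a disjoint union of circles, so that all the correction terms in inclusion–exclusion vanish, and that ``$\partial u_i \cap \partial u$'' is interpreted correctly when passing from the two pieces to their union — specifically that the circles of $\Gamma$ which were boundary for $u'$ and $u''$ separately cease to be boundary in $u$, which is exactly why no term survives. No subtlety about the two types of singular vertices enters, since additivity is purely a matter of Euler characteristics of the relevant subspaces.
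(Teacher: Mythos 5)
The paper itself gives no argument (it simply says the lemma is a direct consequence of the definitions), so there is no ``paper approach'' to compare with; but your proof has a genuine error in its central claim.

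You assert that the gluing locus $\Gamma_i$ (the type-$i$ part of the gluing web) is a disjoint union of circles and hence has $\chi(\Gamma_i)=0$, and you apply the same reasoning to the vertices of $\Gamma$ for $q_{\sk}$. This is false in general: composition of open pre-foams glues along a closed web $\Gamma$, which is a \emph{trivalent graph}, not a union of circles. The type-$i$ edges of $\Gamma$ form a graph whose components are typically arcs between trivalent vertices (e.g.\ the double edge of a theta-web is an interval, $\chi=1$), and the gluing locus for $\sk$ is the finite set of vertices $V(\Gamma)$, with $\chi=|V(\Gamma)|$. So neither $\chi(u_i)$ nor $\chi(\partial u_i\cap\partial u)$ is additive on its own, contrary to what you conclude.

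The reason the lemma still holds is precisely the factor $\tfrac{1}{2}$ in $q_i(u)=\chi(u_i)-\tfrac12\chi(\partial u_i\cap\partial u)$, which your argument renders invisible. The correct bookkeeping is: $\chi(u_i)=\chi(u'_i)+\chi(u''_i)-\chi(\Gamma_i)$ by inclusion--exclusion, while $\Gamma_i$ lies in $\partial u'_i\cap\partial u'$ \emph{and} in $\partial u''_i\cap\partial u''$ but disappears from $\partial u_i\cap\partial u$, so
$$\chi(\partial u_i\cap\partial u)=\chi(\partial u'_i\cap\partial u')+\chi(\partial u''_i\cap\partial u'')-2\chi(\Gamma_i).$$
(You only subtracted $\Gamma_i$ once.) The $-\chi(\Gamma_i)$ and $-\tfrac12\bigl(-2\chi(\Gamma_i)\bigr)$ then cancel, giving $q_i(u)=q_i(u')+q_i(u'')$ for arbitrary $\chi(\Gamma_i)$, and the identical computation with $|V(\Gamma)|$ in place of $\chi(\Gamma_i)$ handles $q_{\sk}$. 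The additivity of the dot counts and the final linear-combination step of your proof are fine.
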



\section{The Kapustin-Li formula and the evaluation of closed pre-foams}
\label{sec:KL}
Let us briefly recall the philosophy behind the pre-foams. Losely speaking, to each closed 
pre-foam should correspond an element in the cohomology ring of a configuration space of planes 
in some big $\bC^M$. The singular graph imposes certain conditions on those planes. The 
evaluation of a pre-foam should correspond to the evaluation of the corresponding element in 
the cohomology ring. Of course one would need to find a consistent way of choosing the volume forms 
on all of those configuration spaces for this to work. However, one encounters a difficult 
technical problem when working out the details of this philosophy. Without explaining all the 
details, we can say that the problem can only be solved by figuring out what to associate to 
the singular vertices. Ideally we 
would like to find a combinatorial solution to this problem, but so far it has eluded us. That 
is the reason why we are forced to use the Kapustin-Li formula.
          
We denote a simple facet with $i$ dots by 
$$\figins{-8}{0.3}{plan-i}.$$  
Recall that $\pi_{k,m}$ can be expressed in terms of 
$\pi_{1,0}$ and $\pi_{1,1}$. In the philosophy explained above, the latter should correspond to 
$\bdot$ and $\wdot$ on a double facet respectively. We can then define  
$$\figins{-8}{0.3}{dplan-km}$$
as being the linear combination of dotted double facets corresponding to the expression of 
$\pi_{k,m}$ in terms of $\pi_{1,0}$ and $\pi_{1,1}$. 
Analogously we expressed $\pi_{p,q,r}$ in terms of $\pi_{1,0,0}$, $\pi_{1,1,0}$ and $\pi_{1,1,1}$ 
(see section~\ref{sec:partial flags}).  
The latter correspond to $\bdot$, $\wdot$ and $\bwdot$ on a triple facet respectively, so 
we can make sense of   
$$\figins{-8}{0.3}{plan-pqr}.$$
Our dot conventions and the results in 
proposition~\ref{prop:principal rels1} 
will allow us to use decorated facets in exactly the same way as we did Schur 
polynomials in the cohomology rings of partial flag varieties.

In the sequel, we shall give a working definition of the 
Kapustin-Li formula for the evaluation of pre-foams and state some 
of its basic properties. The Kapustin-Li formula was introduced 
by A. Kapustin and 
Y. Li \cite{KL} in the context of the evaluation of 2-dimensional TQFTs 
and extended to the case of pre-foams by M. Khovanov and L. Rozansky 
in \cite{KR-LG}.
\subsection{The general framework}
Let $u=\Sigma/s_{\gamma}$ be a closed pre-foam with 
singular graph $s_{\gamma}$ and without 
any dots on it. Let $F$ denote an arbitrary $i$-facet, $i\in\{1,2,3\}$, 
with a $1$-facet being a simple facet, a $2$-facet being a double facet 
and a $3$-facet being a triple facet.

Recall that to each $i$-facet we associated the rational cohomology ring 
of the Grassmanian $\G_{i,N}$, i.e. $H^*(\G_{i,N},\bQ)$. 
Alternatively, we can associate to every $i$-facet 
$F$, $i$ variables $x^F_1\ldots,x^F_i$, with $\deg x^F_i=2i$, and 
the potential 
$W(x^F_1,\ldots,x^F_i)$, which is the polynomial defined such that
$$W(\sigma_1,\ldots,\sigma_i)=y_1^{N+1}+\ldots+y_i^{N+1},$$
where $\sigma_j$ is the $j$-th elementary symmetric polynomial in the 
variables $y_1,\ldots,y_i$. The Jacobi 
algebra $J_W$, which is given by 
$$J_W=\bQ[x^F_1,\ldots,x^F_i]/\langle \mathbf{\partial} W\rangle$$
where we mod out by the ideal generated by the partial derivatives of $W$, 
is isomorphic to 
$H^*(\G_{i,N},\bQ)$. Note that the top 
degree nonvanishing element in this Jacobi algebra is $\pi_{N-i,\ldots,N-i}$ 
(multiindex of length $i$), i.e. the polynomial 
in variables $x^F_1,\ldots,x^F_i$ which gives $\pi_{N-i,\ldots,N-i}$ after replacing the variable $x^F_j$ by 
$\pi_{1,\ldots,1,0,\ldots,0}$ with exactly $j$ $1$'s, $1\le j\le i$ (see also 
subsection \ref{sec:Schur}). We define the trace 
(volume) form, $\epsilon$, on the cohomology ring of the 
Grassmanian, 
by giving it on the basis of the Schur polynomials: 
$$\epsilon(\pi_{j_1,\ldots,j_i})=
\begin{cases}(-1)^{\lfloor\frac{i}{2}\rfloor}&\text{if}\quad (j_1,\ldots,j_i)=(N-i,\ldots,N-i)\\
0&\text{else}
\end{cases}.
$$

The Kapustin-Li formula associates to $u$ an 
element in the product of the cohomology rings of the Grassmanians 
(or Jacobi 
algebras), $J$, over all the facets in the pre-foam. 
Alternatively, we can see this element as a polynomial, $KL_u\in J$, in 
all the variables associated to the facets. Now, let us put some dots on 
$u$. Recall that a dot corresponds to an elementary symmetric polynomial. 
So a linear combination of dots on $u$ is equivalent to a polynomial, $f$, 
in the variables of the dotted facets. 
The value of this dotted pre-foam we define to be
\begin{equation}
\langle u\rangle_{KL}:=\epsilon\left(\prod_{F}\dfrac{\det(\partial_i\partial_j W_F)^{g(F)}}{(N+1)^{g'(F)}}
KL_u\, f\right).
\label{ev}
\end{equation}
\noindent The product is over all facets $F$ and $W_F$ is the potential 
associated to $F$. For any $i$-facet $F$, $i=1,2,3$, the symbol $g(F)$ 
denotes the genus of $F$ and $g'(F)=ig(F)$.

Having explained the general idea, we are left with defining the element 
$KL_u$ for a dotless pre-foam. For that we have to explain Khovanov and 
Rozansky's extension of the Kapustin-Li formula to pre-foams \cite{KR-LG}, 
which uses the theory of matrix factorizations.

\subsection{Matrix factorizations}

Let $R=\bQ[x_1,\ldots,x_k]$ be a polynomial ring, and $W\in R$. By a matrix 
factorization over ring $R$ with the potential $W$ we mean a triple $(M,D,W)$, where $M=M^0\oplus M^1$ 
($\rank M^0=\rank M^1$) is a finite-dimensional $\bZ/2\bZ$-graded free $R$-module, while 
the (twisted) differential $D\in \End(M)$ is such that $\deg D=1$ and 
\begin{equation}
D^2=W \Id.
\label{matf}
\end{equation}

In other words, a matrix factorization is given by the following square 
matrix with polynomial entries
$$D=\left[\begin{matrix}
0& D_0 \\
D_1 & 0 
\end{matrix}
\right],$$
such that $D_0D_1=D_1D_0=W \Id$. Matrix factorizations are also represented in the 
following form:
$$M^0 {\xrightarrow{\ \ D_0\ \ }} M^1 {\xrightarrow{\ \ D_1\ \ }} M^0.$$

The tensor product of two matrix factorizations with potentials $W_1$ and $W_2$ is a matrix factorization with potential $W_1+W_2$.

The dual of the matrix factorization $(M,D,W)$ is given by
$$(M,D,W)^*=(M^*,D^*,-W),$$
where
$$D^*=\left[\begin{matrix}
0 & D_1^*\\
-D_0^* & 0
\end{matrix} \right]$$
and $D_i^*$, $i=0,1$, is the dual map (transpose matrix) of $D_i$.

Throughout the paper we shall use a particular type of matrix 
factorizations - namely the tensor products of Koszul factorizations. 
For two elements $a,b\in R$, the 
Koszul factorization $\{a,b\}$ is defined as the matrix factorization
$$R {\xrightarrow{\ \ a \ \ }} R {\xrightarrow{\ \ b \ \ }} R.$$

Moreover if $a=(a_1,\ldots,a_m)\in R^m$ and $b=(b_1,\ldots,b_m)\in R^m$, 
then the tensor product of the Koszul factorization $\{a_i,b_i\}$, 
$i=1,\ldots,m$, is denoted by
\begin{equation}
\left(\begin{array}{cc}
a_1& b_1\\
a_2& b_2\\
\vdots & \vdots \\
a_m& b_m
\end{array}
\right):=
\bigotimes_{i=1}^m \{a_i,b_i\}.
\label{Kosz}
\end{equation}

Sometimes we also write $\{a,b\}=\otimes_{i=1}^m \{a_i,b_i\}$. If 
$\sum_{i=1}^m a_ib_i=0$ then $\{a,b\}$ is a 2-periodic complex, and 
its homology is an $R/\langle a_1,\ldots,a_m,b_1,\ldots,b_m \rangle$-module.

\subsection{Decoration of pre-foams}

As we said, to each facet we associate certain variables (depending on the 
type of facet), a potential and the corresponding Jacobi algebra. 
If the variables associated to a facet $F$ are $x_1,\ldots,x_i$, then we 
define $R_F=\bQ[x_1,\ldots,x_i]$.

Now we pass to the edges. To each edge, we associate 
a matrix factorization whose potential is equal to the signed sum of the 
potentials of the facets that are glued along this edge. We define it to 
be a certain tensor product of Koszul factorizations.
In the cases we are interested in, there are always three 
facets glued along an edge, with two possibilities: either two 
simple facets and one double facet, or one simple, one double and one 
triple facet. 

In the first case, we denote the variables of the two simple facets by 
$x$ and $y$ and the potentials by $x^{N+1}$ and $y^{N+1}$ respectively. To 
the double facet we associate the variables $s$ and $t$ and the potential 
$W(s,t)$. To the edge we associate the matrix factorization 
which is the tensor product of Koszul factorizations given by
\begin{equation}
MF_1=\left( \begin{array}{cc}
x+y-s & A'\\
xy-t & B'
\end{array}\right),
\label{MF1}
\end{equation}
where $A'$ and $B'$ are given by
\begin{eqnarray*}
A'&=&\frac{W(x+y,xy)-W(s,xy)}{x+y-s},\\
B'&=&\frac{W(s,xy)-W(s,t)}{xy-t}. 
\end{eqnarray*}
Note that $(x+y-s)A'+(xy-t)B'=x^{N+1}+y^{N+1}-W(s,t)$.

In the second case, the variable of the simple facet is $x$ and 
the potential is $x^{N+1}$, the variables of 
the double facet are $s$ and $t$ and the potential is $W(s,t)$, and the 
variables of the triple face are $p$, $q$ and $r$ and the potential is 
$W(p,q,r)$. Define 
the polynomials
\begin{eqnarray}
A&=&\frac{W(x+s,xs+t,xt)-W(p,xs+t,xt)}{x+s-p},\label{MF21}\\
B&=&\frac{W(p,xs+t,xt)-W(p,q,xt)}{xs+t-q},\\
C&=&\frac{W(p,q,xt)-W(p,q,r)}{xt-r},
\end{eqnarray}
so that 
$$(x+s-p)A+(xs+t-q)B+(xt-r)C=x^{N+1}+W(s,t)-W(p,q,r).$$ 

To such an edge we associate the matrix factorization given by the 
following tensor product of Koszul factorizations:
\begin{equation}
MF_2=\left(\begin{array}{cc}x+s-p & A\\
xs+t-q & B\\
xt-r & C\end{array}\right).
\label{MF2}
\end{equation}

In both cases, to an edge with the opposite orientation we associate the 
dual matrix factorization.

\medskip

Next we explain what we associate to a singular vertex. First of all,
for each vertex $v$, we define its local graph $\gamma_v$ to be 
the intersection of a small sphere centered at $v$ with the pre-foam. 
Then the vertices of $\gamma_v$ correspond to the edges of $u$ that are 
incident to $v$, to which we had associated matrix factorizations. 

In this paper all local graphs are in fact tetrahedrons. However, recall 
that there are two types of vertices (see the remarks below definition~\ref{defn:pre-foam}). 
Label the six facets that are incident to a vertex $v$ by the numbers 
$1,2,3,4,5$ and $6$. Furthermore, 
denote the edge along which are glued the facets $i$, $j$ and $k$ by 
$(ijk)$. Denote the matrix factorization associated to the edge 
$(ijk)$ by $M_{ijk}$, if the edge points toward $v$, and by 
$M_{ijk}^*$, if the edge points away from $v$. Note that $M_{ijk}$ and 
$M_{ijk}^*$ are both defined over $R_i\otimes R_j \otimes R_k$. 

Now we can take the tensor product of these four matrix factorizations, 
over the polynomial rings of the facets of the pre-foam, that correspond to the 
vertices of $\gamma_v$. This way we obtain the matrix factorization $M_v$, 
whose potential is equal to $0$, and so it is a 
2-periodic chain complex 
and we can take its homology. To each vertex $v$ we associate an 
element $O_v\in H^*(M_v)$.

More precisely, if $v$ is of type I, then  
\begin{equation}
\begin{array}{rcl}
\label{eq:vertex}
H^*(M_v)&\cong& \Ext\left(MF_1(x,y,s_1,t_1)\otimes_{s_1,t_1} MF_2(z,s_1,t_1,p,q,r)\right. ,\\ 
& & \left. MF_1(y,z,s_2,t_2)\otimes_{s_2,t_2} MF_2(x,s_2,t_2,p,q,r)\right).
\end{array}
\end{equation}

If $v$ is of type II, then 
\begin{equation}
\begin{array}{rcl}
H^*(M_v)&\cong& \Ext\left(MF_1(y,z,s_2,t_2)\otimes_{s_2,t_2} MF_2(x,s_2,t_2,p,q,r)\right. ,\\
& & \left. MF_1(x,y,s_1,t_1)\otimes_{s_1,t_1} MF_2(z,s_1,t_1,p,q,r)\right).
\end{array}
\end{equation}
Both isomorphisms hold up to a global shift in $q$. Note that 
$$
MF_1(x,y,s_1,t_1)\otimes_{s_1,t_1} MF_2(z,s_1,t_1,p,q,r) \simeq
MF_1(y,z,s_2,t_2)\otimes_{s_2,t_2} MF_2(x,s_2,t_2,p,q,r),
$$
because both tensor products are homotopy equivalent to 
$$
\left(\begin{array}{cc}x+y+z-p & *\\
xy+xz+yz-q & *\\
xyz-r & *\end{array}\right).
$$
We have not specified the r.h.s. of the latter Koszul factorizations, because by theorem 2.1 in 
\cite{KR2} 
we have $\{a,b\}\simeq \{a,b'\}$ if $\sum a_ib_i= \sum a_ib_i'$ and if the sequence 
$\{a_i\}$ is regular. If $v$ is of type I, we take $O_v$ to be the cohomology class of 
a fixed degree $0$ homotopy equivalence
$$
w_v\colon MF_1(x,y,s_1,t_1)\otimes_{s_1,t_1} MF_2(z,s_1,t_1,p,q,r)\to
MF_1(y,z,s_2,t_2)\otimes_{s_2,t_2} MF_2(x,s_2,t_2,p,q,r).
$$
The choice of $O_v$ is unique up to a scalar, because the $q$-dimension 
of the $\Ext$-group in (\ref{eq:vertex}) is equal to 
$$q^{3N-6}\qdim(H^*(M_v))=q^{3N-6}[N][N-1][N-2]=1+q(\ldots),$$
where $(\ldots)$ is a polynomial in $q$. Note that $M_v$ is homotopy equivalent to 
the matrix factorization which corresponds to the closure of $\Upsilon$ in \cite{KR}, which 
allows one to compute the $q$-dimension above using the results in the latter paper.
If $v$ is of type II, we take $O_v$ to be the cohomology class of the homotopy 
inverse of $w_v$. Note that a particular choice of $w_v$ fixes $O_v$ for 
both types of vertices and that the value of the Kapustin-Li formula for a closed 
pre-foam does not depend on that choice because there are as many singular vertices of type I 
as there are of type II (see the remarks below definition~\ref{defn:pre-foam}). 
We do not know an explicit formula for $O_v$. Although such a formula would be very interesting 
to have, we do not need it for the purposes of this paper.

\subsection{The Kapustin-Li derivative and the evaluation of closed pre-foams}

From the definition, every boundary component of each facet $F$ is 
either a circle or a cyclicly ordered finite sequence
of edges, such that the beginning of the next edge corresponds to the end 
of the previous
edge. For every boundary component we choose an edge $e$ - the value of 
the Kapustin-Li formula does not depend on this choice. Denote the 
differential of the matrix factorization associated to this edge by $D_e$.   

The associated Kapustin-Li derivative of $D_e$ in the 
variables $x_1,\ldots,x_k$ associated to the facet $F$, is an element 
from $\End(M)\cong M\otimes M^*$, given by:
\begin{equation}
O_{F,e}=\partial D_{e} \hat{\ } =\frac{1}{k!}\sum_{\sigma\in S_k} 
(\sgn{\sigma}){\partial_{\sigma(1)
}D_{e} \partial_{\sigma(2)}D_{e}\ldots\partial_{\sigma(k)}D_{e}},
\label{tw}
\end{equation}
where $S_k$ is the set of all permutations of the set $\{1,\ldots,k\}$, 
and $\partial_i D$ is the partial derivative of $D$ with respect to 
the variable $x_i$. Note that $e$ can be the preferred edge for more than 
one facet. In general, let $O_e$ be the product of 
$O_{F,e}$ over all facets $F$ for which $e$ is the preferred edge. 
The order of the factors in this product is irrelevant, because they commute (see \cite{KR-LG}). 
If $e$ is not the preferred edge for any $F$, we take $O_e$ to be the identity. 

Finally, around each boundary component of $\partial F$, for each 
facet $F$, we contract all tensor factors $O_e$ and $O_v$. 
Note that one has to use super-contraction in order to 
get the right signs.   

For a better understanding of the Kapustin-Li formula, consider the 
special case of a theta pre-foam $\Theta$. There are three facets $F_1$, $F_2$, $F_3$ which are 
glued along a common circle $c$, which is the preferred edge for all three. We associated a 
certain matrix factorization $M$ to $c$ with differential $D$. Let 
$\partial D_1 \hat{\ }$, $\partial D_2 \hat{\ }$ and $\partial D_3 \hat{\ }$ 
be the Kapustin-Li derivatives of $D$ with respect to the variables of the 
facets $F_1$, $F_2$ and $F_3$, respectively. Then we have 
\begin{equation}
KL_{\Theta}={\Tr}^s ({\partial} D_1\hat{\ }{\partial} D_2\hat{\ }{\partial} 
D_3\hat{\ }).
\end{equation}

As a matter of fact we will see that we have to {\em normalize} the Kapustin-Li formula in 
order to get ``nice values''.

\subsection{Dot conversion and dot migration}
The pictures related to the computations in this subsection and the next three can be found in 
Proposition~\ref{prop:principal rels1}.

Since $KL_u$ takes values in the tensor product of the 
Jacobian algebras of the potentials associated to the facets of $u$, we see that 
for a simple facet we have $x^N=0$, for a double facet $\pi_{i,j}=0$ if $i\geq N-1$, and 
for a triple facet $\pi_{p,q,r}=0$ if $p\geq N-2$. We call these the 
{\em dot conversion relations}.   
 
To each edge along which two simple facets with variables $x$ and $y$ and one double 
facet with the variables $s$ and $t$ are glued, we 
associated the matrix factorization $MF_1$ with 
entries $x+y-s$ and $xy-t$. Therefore $\Ext(MF_1,MF_1)$ 
is a module over $R/\langle x+y-s,xy-t\rangle$. Hence, 
we obtain the {\em dot migration relations} along this edge.

Analogously, to the other type of singular edge along 
which are glued a simple facet with variable $x$, 
a double facet with variable $s$ and $t$, and a triple 
facet with variables $p$, $q$ and $r$,
we associated the matrix factorization $MF_2$ 
and $\Ext(MF_2,MF_2)$ is a module over 
$R/\langle x+s-p, xs+t-q,xt-r\rangle$, and hence we 
obtain the {\em dot migration relations} along this edge.

\subsection{$(1,1,2)-$Theta}

Recall that $W(s,t)$ is the polynomial such that 
$W(x+y,xy)=x^{N+1}+y^{N+1}$. More precisely, 
we have
\begin{equation}
W(s,t)=\sum_{i+2j=N+1}a_{ij}s^it^j,
\end{equation}
with $a_{N+1,0}=1$, $a_{N+1-2j,j}=\frac{(-1)^j}{j}(N+1){N-j\choose j-1}$, for $2\le 
2j\le N+1$, and $a_{ij}=0$ otherwise. In particular $a_{N-1,1}=-(N+1)$. Then we have
\begin{eqnarray}
W'_1(s,t)=\sum_{i+2j=N+1}ia_{ij}s^{i-1}t^j,\\
W'_2(s,t)=\sum_{i+2j=N+1}ja_{ij}s^it^{j-1}.
\end{eqnarray}
 
By $W'_1(s,t)$ and $W'_2(s,t)$, we denote the partial derivatives of $W(s,t)$ with respect to the first and the second variable, respectively.

To the singular circle of a standard theta pre-foam with two 
simple facets, with variables $x$ and $y$ respectively, and 
one double facet, with variables $s$ and $t$, we assign the  
matrix factorization $MF_1$:
\begin{equation}
MF_{1}=\left(\begin{array}{cc}
x+y-s & A'\\
xy-t & B'
\end{array} \right).
\end{equation}
Recall that  
\begin{eqnarray}
A'&=&\frac{W(x+y,xy)-W(s,xy)}{x+y-s}, \label{Adef}\\
B'&=&\frac{W(s,xy)-W(s,t)}{xy-t}. \label{Bdef}
\end{eqnarray}
Hence, the differential of this matrix factorization is 
given by the following 4 by 4 matrix:
\begin{equation}
D=\left[\begin{array}{cc}
0 & D_0\\
D_1 & 0
\end{array}
\right],
\end{equation}
where
\begin{equation}
D_0=\left[\begin{array}{cc}
x+y-s & -B'\\
xy-t & A'
\end{array}
\right],\quad
D_1=\left[\begin{array}{cc}
A' & B'\\
t-xy & x+y-s
\end{array}
\right].
\end{equation}
The Kapustin-Li formula assigns the polynomial, $KL_{\Theta_1}(x,y,s,t)$, which is given by the supertrace of the 
twisted differential of $D$
\begin{equation}
KL_{\Theta_1}={\Tr}^s\left(\partial_x D\partial_y D \frac{1}{2}(\partial_s D\partial_t D-\partial_t D \partial_s D)\right).
\end{equation}
Straightforward computation gives
\begin{equation}
KL_{\Theta_1}=B'_s(A'_x-A'_y)+(A'_x+A'_s)(B'_y+xB'_t)-(A'_y+A'_s)(B'_x+yB'_t),
\label{KL1}
\end{equation}
where by $A'_i$ and $B'_i$ we have denoted the partial derivatives with respect to the variable $i$.
From the definitions (\ref{Adef}) and (\ref{Bdef}) we have 
\begin{eqnarray*}
A'_x-A'_y&=&(y-x)\frac{W'_2(x+y,xy)-W'_2(s,xy)}{x+y-s},\\
A'_x+A'_s&=&\frac{W'_1(x+y,xy)-W'_1(s,xy)+y(W'_2(x+y,xy)-W'_2(s,xy))}{x+y-s},\\
A'_y+A'_s&=&\frac{W'_1(x+y,xy)-W'_1(s,xy)+x(W'_2(x+y,xy)-W'_2(s,xy))}{x+y-s},\\
B'_s&=&\frac{W'_1(s,xy)-W'_1(s,t)}{xy-t},\\
B'_x+yB'_t&=&y\frac{W'_2(s,xy)-W'_2(s,t)}{xy-t},\\
B'_y+xB'_t&=&x\frac{W'_2(s,xy)-W'_2(s,t)}{xy-t}.
\end{eqnarray*}
After substituting this back into (\ref{KL1}), we obtain
\begin{equation}
KL_{\Theta_1}=(y-x)\left|\begin{array}{cc}
\alpha &\beta\\
\gamma & \delta 
\end{array}\right|,
\label{eq1}
\end{equation}  
where
\begin{eqnarray*}
\alpha&=&\frac{W'_1(x+y,xy)-W'_1(s,xy)}{x+y-s},\\
\beta&=&\frac{W'_2(x+y,xy)-W'_2(s,xy)}{x+y-s},\\
\gamma&=&\frac{W'_1(s,xy)-W'_1(s,t)}{xy-t},\\
\delta&=&\frac{W'_2(s,xy)-W'_2(s,t)}{xy-t}.
\end{eqnarray*}
From this formula we see that $KL_{\Theta_1}$ is homogeneous of degree $4N-6$ (remember that 
$\deg x=\deg y=\deg s=2$ and $\deg\,t=4$).

Since the evaluation is in the product of the Grassmanians corresponding to the three disks, i.e. in the ring
$\bQ[x]/x^N \times \bQ[y]/y^N \times \bQ[s,t]/\langle W'_1(s,t),W'_2(s,t) \rangle$, we have 
$x^N=y^N=0=W'_1(s,t)=W'_2(s,t)$. Also, we can express the monomials in $s$ and $t$ as linear 
combinations of the Schur polynomials $\pi_{k,l}$ (writing $s=\pi_{1,0}$ and $t=\pi_{1,1})$), and 
we have $W'_1(s,t)=(N+1)\pi_{N,0}$ and $W'_2(s,t)=-(N+1)\pi_{N-1,0}$. Hence, we can write 
$KL_{\Theta_1}$ as
$$KL_{\Theta_1}=(y-x)\sum_{N-2\ge k\ge l\ge 0} {\pi_{k,l} p_{kl}(x,y)},$$
with $p_{kl}$ being a polynomial in $x$ and $y$. We want to determine which combinations of dots 
on the simple facets give rise to non-zero evaluations, so our aim is to compute the coefficient of $\pi_{N-2,N-2}$ in the sum on the r.h.s. of the above equation 
(i.e. in the determinant in (\ref{eq1})). For degree reasons, this coefficient is of degree zero, 
and so we shall only compute the parts of $\alpha$, $\beta$, $\gamma$ and $\delta$ which do not 
contain $x$ and $y$. We shall denote these parts by putting 
a bar over the Greek letters. Thus we have
\begin{eqnarray*}
\bar{\alpha}&=&(N+1)s^{N-1},\\
\bar{\beta}&=&-(N+1)s^{N-2},\\
\bar{\gamma}&=&\sum_{i+2j=N+1,\,j\ge 1}ia_{ij}s^{i-1}t^{j-1},\\
\bar{\delta}&=&\sum_{i+2j=N+1,\,j\ge 2}ja_{ij}s^it^{j-2}.\\
\end{eqnarray*}
Note that we have 
$$t\bar{\gamma}+(N+1)s^N=W'_1(s,t),$$
and 
$$t\bar{\delta}-(N+1)s^{N-1}=W'_2(s,t),$$
and so in the cohomology ring of the Grassmanian $\G_{2,N}$, we have $t\bar{\gamma}=-(N+1)s^N$ and $t\bar{\delta}=(N+1)s^{N-1}$.
On the other hand, by using $s=\pi_{1,0}$ and $t=\pi_{1,1}$, we obtain that in 
$H^*(\G_{2,N})\cong\bQ[s,t]/\langle\pi_{N-1,0},\pi_{N,0}\rangle$, the following holds:
$$s^{N-2}=\pi_{N-2,0}+tq(s,t),$$
for some polynomial $q$, and so
$$s^{N-1}=s^{N-2}s=\pi_{N-1,0}+\pi_{N-2,1}+stq(s,t)=t(\pi_{N-3,0}+sq(s,t)).$$
Thus, we have
\begin{eqnarray}
\left|
\begin{array}{cc}
\bar{\alpha} &\bar{\beta}\\
\bar{\gamma} &\bar{\delta} 
\end{array}\right|&=&
(N+1)(\pi_{N-3,0}+sq(s,t))t\bar{\delta} +(N+1)\pi_{N-2,0}\bar{\gamma} +(N+1)q(s,t)t\bar{\gamma} \nonumber \\
&=&(N+1)^2(\pi_{N-3,0}+sq(s,t))s^{N-1} + (N+1)\pi_{N-2,0}\bar{\gamma} - (N+1)^2 q(s,t) s^N \nonumber \\
&=&(N+1)^2\pi_{N-3,0}s^{N-1}+ (N+1)\pi_{N-2,0}\bar{\gamma}. \label{eq2}
\end{eqnarray}
Since 
$$\bar{\gamma}=(N-1)a_{N-1,1}s^{N-2} + t r(s,t)$$
holds in the cohomology ring of Grassmanian, 
for some polynomial $r(s,t)$, we have
$$\pi_{N-2,0}\bar{\gamma}=\pi_{N-2,0}(N-1)a_{N-1,1}s^{N-2}=
-\pi_{N-2,0}(N-1)(N+1)s^{N-2}.$$
Also, we have that for every $k\ge 2$,
$$s^k= \pi_{k,0}+(k-1)\pi_{k-1,1}+t^2 w(s,t),$$
for some polynomial $w$. Replacing this in (\ref{eq2}) and bearing in mind that $\pi_{i,j}=0$, for $i\ge N-1$, we get
\begin{eqnarray}
\left|
\begin{array}{cc}
\bar{\alpha} &\bar{\beta}\\
\bar{\gamma} &\bar{\delta} 
\end{array}\right|
&=& (N+1)^2 s^{N-2} (\pi_{N-2,0}+\pi_{N-3,1}-(N-1)\pi_{N-2,0}) \nonumber\\
&=& (N+1)^2 (\pi_{N-2,0}+(N-3)\pi_{N-3,1}+\pi_{2,2}w(s,t)) (\pi_{N-3,1}-(N-2)\pi_{N-2,0})\nonumber\\
&=& -(N+1)^2 \pi_{N-2,N-2}.  
\end{eqnarray}
Hence, we have
$$KL_{\Theta_1}=(N+1)^2 (x-y) \pi_{N-2,N-2} + \sum_{
N-2\ge k \ge l\ge 0 \atop  N-2>l}
c_{i,j,k,l} \pi_{k,l} x^i y^j.$$
Recall that in the 
product of the Grassmanians corresponding to the three 
disks, i.e. in the ring 
$\bQ[x]/x^N \times \bQ[y]/y^N \times \bQ[s,t]/\langle \pi_{N-1,0},\pi_{N,0} \rangle$, we have 
$$\epsilon(x^{N-1}y^{N-1}\pi_{N-2,N-2})=-1.$$
Therefore the only monomials $f$ in $x$ and $y$ such that 
$\langle KL_{\Theta_1}f\rangle_{KL}\ne 0$ 
are $f_1=x^{N-2}y^{N-1}$ and $f_2=x^{N-1}y^{N-2}$, and  
$\langle KL_{\Theta_1}f_1\rangle_{KL}=-(N+1)^2$ and 
$\langle KL_{\Theta_1}f_2\rangle=(N+1)^2$. Thus, we have that the value of 
the theta pre-foam with unlabelled 2-facet is nonzero only when the 
first 1-facet has $N-2$ dots and the second one has $N-1$ 
dots (and has the value $-(N+1)^2$) and when the 
first 1-facet has $N-1$ dots and the second one has $N-2$ dots 
(and has the value $(N+1)^2$). The evaluation of this theta foam with 
other labellings can be obtained from the result above by dot migration.

\subsection{$(1,2,3)-$Theta}
For this theta the method is the same as in the previous case, just the computations are more complicated.
In this case, we have one 1-facet, to which we associate the 
variable $x$, one 2-facet, with variables $s$ and $t$ and 
the 3-facet with variables $p$, $q$ and $r$. Recall that the 
polynomial $W(p,q,r)$ is such that 
$W(a+b+c,ab+bc+ac,abc)=a^{N+1}+b^{N+1}+c^{N+1}$. We denote 
by $W'_i(p,q,r)$, $i=1,2,3$, the partial derivative of $W$ 
with respect to $i$-th variable. Also, let $A$, $B$ and $C$ 
be the polynomials such that
\begin{eqnarray}
A&=&\frac{W(x+s,xs+t,xt)-W(p,xs+t,xt)}{x+s-p},\label{1}\\
B&=&\frac{W(p,xs+t,xt)-W(p,q,xt)}{xs+t-q},\\
C&=&\frac{W(p,q,xt)-W(p,q,r)}{xt-r}.\label{3}
\end{eqnarray}

To the singular circle of this theta pre-foam, we associated 
the matrix factorization 
(see (\ref{MF21})-(\ref{MF2})):
$$MF_2=\left(\begin{array}{cc}x+s-p & A\\
xs+t-q & B\\
xt-r & C\end{array}\right).$$
The differential of this matrix factorization is the 8 by 8 matrix
\begin{equation}
D=\left[\begin{array}{cc}
0 & D_0\\
D_1 & 0
\end{array}
\right],
\end{equation}
where
\begin{equation}
D_0=\left[\begin{array}{c|c}
d_0 & -C\, I_2 \\\hline
(xt-r)I_2 & d_1 
\end{array}
\right],
\end{equation}
\begin{equation}
D_1=\left[\begin{array}{c|c}
d_1 & C\, I_2\\
\hline
(r-xt)I_2 & d_0 \\
\end{array}
\right].
\end{equation}
Here $d_0$ and $d_1$ are the differentials of the matrix factorization
$$\left(\begin{array}{cc}x+s-p & A\\
xs+t-q & B
\end{array}\right),$$
i.e.
$$d_0=\left[\begin{array}{cc}x+s-p & -B\\
xs+t-q & A
\end{array}\right],\quad\quad
d_1=\left[\begin{array}{cc}A & B\\
q-xs-t & x+s-p
\end{array}\right].$$
The Kapustin-Li formula assigns to this theta pre-foam the polynomial 
$KL_{\Theta_2}(x,s,t,p,q,r)$ 
given as the supertrace of the twisted differential of $D$, i.e.
$$KL_{\Theta_2}={\Tr}^s\left(\partial_x D\frac{1}{2}(\partial_s D\partial_t D-\partial_t D \partial_s D)
\partial_3D\hat{\ }\right),$$
where
\begin{eqnarray*}\partial_3D\hat{\ } &=&\frac{1}{3!}\left(\partial_p D\partial_qD 
\partial_rD-\partial_p D\partial_rD \partial_qD+\partial_q D\partial_rD \partial_pD-\right. \\
&&\left.\partial_q D\partial_pD \partial_rD+\partial_r D\partial_pD \partial_qD-\partial_r D\partial_qD \partial_pD\right).
\end{eqnarray*}
After straightforward computations and some grouping, we obtain
\begin{eqnarray*}
KL_{\Theta_2}
&=& -(A_p+A_s)[(B_t+B_q)(C_x+tC_r)-(B_x+sB_q)(C_t+xC_r)-(B_x-sB_t)C_q]\\
&&-\ (A_p+A_x)[(B_s+xB_q)(C_t+xC_r)+(B_s-xB_t)C_q]\\
&&-\ (A_x-A_s)[B_p(C_t+xC_r)-(B_t+B_q)C_p+B_pC_q]\\
&&+\ A_t[((B_s+xB_q)+B_p)(C_x+tC_r)+((B_s+xB_q)\\
&&+\ (B_x+sB_q))C_p+((sB_s-xB_x)+(s-x)B_p)C_q].
\end{eqnarray*}
In order to simplify this expression, we introduce the following polynomials
\begin{eqnarray*}
a_{1i} &=& \frac{W'_i(x+s,xs+t,xt)-W'_i(p,xs+t,xt)}{x+s-p},\quad i=1,2,3, \\
a_{2i} &=& \frac{W'_i(p,xs+t,xt)-W'_i(p,q,xt)}{xs+t-q},\qquad\qquad i=1,2,3, \\
a_{3i} &=& \frac{W'_i(p,q,xt)-W'_i(p,q,r)}{xt-r},\qquad\qquad\qquad i=1,2,3.
\end{eqnarray*}

Then from (\ref{1})-(\ref{3}), we have
 $$A_x+A_p=a_{11}+sa_{12}+ta_{13},\quad A_p+A_s=a_{11}+xa_{12},$$
 $$A_x-A_s=(s-x)a_{12}+ta_{13},\quad A_t=a_{12}+xa_{13},$$
 $$B_p=a_{21},\quad B_s-xB_t=-x^2a_{23},$$
 $$sB_s-xB_x=xta_{23},\quad B_x-sB_t=(t-sx)a_{23},$$
 $$B_t+B_q=a_{22}+xa_{23},\quad B_x+sB_q=sa_{22}+ta_{23}, B_s+xB_q=xa_{22},$$
 $$C_p=a_{31},\quad C_q=a_{32},\quad C_x+tC_r=ta_{33},\quad 
C_t+xC_r=xa_{33}.$$

\medskip

Using this $KL_{\Theta_2}$ becomes
$$KL_{\Theta_2}=-(t-sx+x^2)\left| \begin{array}{ccc}a_{11}&a_{12}&a_{13}\\
a_{21}&a_{22}&a_{23}\\
a_{31}&a_{32}&a_{33}\end{array}\right| .$$

Now the last part follows analogously as in the case of the 
$(1,1,2)$-theta pre-foam. For degree reasons  
the coefficient of $\pi_{N-3,N-3,N-3}$ in the latter 
determinant is of degree zero, and one can obtain that it is 
equal to $(N+1)^3$. Thus, the coefficient of $\pi_{N-3,N-3,N-3}$ in $KL_{\Theta_2}$ is 
$-(N+1)^3(t-sx+x^2)$ from which we obtain the value of the theta pre-foam when the 3-facet is 
undotted. For example, we see that 
$$\epsilon(KL_{\Theta_2}\pi_{1,1}(s,t)^{N-3}x^{N-1})=-(N+1)^3.$$
It is then easy to obtain the values when the 3-facet is 
labelled by $\pi_{N-3,N-3,N-3}(p,q,r)$ using dot migration.  
The example above implies that 
$$\epsilon(KL_{\Theta_2}\pi_{N-3,N-3,N-3}(p,q,r)x^2)=-(N+1)^3.$$

\subsection{Spheres}

The values of dotted spheres are easy to compute. Note that for any sphere with dots $f$ 
the Kapustin-Li formula gives 
$$\epsilon(f).$$ 
Therefore for a simple sphere we get $1$ if $f=x^{N-1}$, for a double 
sphere we get 
$-1$ if $f=\pi_{N-2,N-2}$ and for a triple sphere we get $-1$ if 
$f=\pi_{N-3,N-3,N-3}$. 

\subsection{Normalization}
\label{sec:norm}

It will be convenient to normalize the Kapustin-Li evaluation. Let $u$ be a closed pre-foam 
with graph $\Gamma$. Note that $\Gamma$ has two types of edges: the ones incident to two 
simple facets and one double facet and the ones incident to one simple, one double and one triple facet. 
Edges of the same type form cycles in $\Gamma$. Let $e_{112}(u)$ be the total number of cycles in $\Gamma$ 
with edges of the first type and $e_{123}(u)$ the total number of cycles with edges of the second type. 
We normalize the Kapustin-Li formula by dividing $KL_u$ by 
$$(N+1)^{2e_{112}+3e_{123}}.$$
In the sequel we only use this normalized Kapustin-Li 
evaluation keeping the same notation $\langle u \rangle_{KL}$.   
Note that the numbers $e_{112}(u)$ and $e_{123}(u)$ are invariant under the relation (MP). 
Note also that with this normalization the KL-evaluation in the examples above always gives 
$0,-1$ or $1$.

\subsection{The glueing property}
\label{sec:glueing}

If $u$ is an open pre-foam whose boundary consists of two parts $\Gamma_1$ and $\Gamma_2$, then 
the Kapustin-Li formula associates 
to $u$ an element from $\Ext(M_1,M_2)$, where $M_1$ and $M_2$ are matrix factorizations associated to $\Gamma_1$ and $\Gamma_2$ respectively. If $u'$ is another pre-foam whose boundary consists of $\Gamma_2$ and $\Gamma_3$, then it corresponds to an element from $\Ext(M_2,M_3)$, while the element associated to the pre-foam $uu'$, which is obtained by gluing the pre-foams $u$ and $u'$ along $\Gamma_2$, is equal to the composite of the elements associated to $u$ and $u'$.

On the other hand, we can see $u$ as a morphism from the empty set 
to its boundary $\Gamma=\Gamma_2\cup \overline{\Gamma_1}$, where $\overline{\Gamma_1}$ is 
equal to $\Gamma_1$ but with the opposite orientation. In that case, the Kapustin-Li formula 
associates to it an element from 
$$\Ext(\emptyset,M_{\Gamma_2}\otimes M_{\Gamma_1}^*)\cong H^*(\Gamma).$$ 
Of course both ways of applying the Kapustin-Li formula are equivalent up to a global 
$q$-shift by corollary 6 in \cite{KR}. 

In the case of a pre-foam $u$ with corners, i.e. a pre-foam with two horizontal boundary components 
$\Gamma_1$ and $\Gamma_2$ which are connected by vertical edges, one has to ``pinch'' the 
vertical edges. This way one can consider $u$ to be a morphism from the empty set to 
$\Gamma_2\cup_{v}\overline{\Gamma_1}$, where $\cup_v$ means that the webs 
are glued at their vertices. The same observations as above hold, except that 
$M_{\Gamma_2}\otimes M_{\Gamma_1}^*$ is now the tensor product over the polynomial ring 
in the variables associated to the horizontal edges with corners.


\section{The category $\foam$}
\label{sec:foamN}
Recall that $\langle u \rangle_{KL}$ denotes the Kapustin-Li evaluation of a closed 
pre-foam $u$.  
\begin{defn}
\label{defn:foam}
The category $\foam$ is the quotient of the category $\PF$ by the 
kernel of $\langle\;\rangle_{KL}$, i.e. by the following identifications: for any webs $\Gamma$, 
$\Gamma'$ 
and finite sets $f_i\in\mbox{Hom}_{\PF}\left(\Gamma,\Gamma'\right)$ 
and $c_i\in\bQ$ we impose the relations 
$$\sum\limits_{i}c_if_i=0\quad \Leftrightarrow\quad
\sum\limits_{i}c_i\langle g'f_ig\rangle_{KL}=0,$$ 
for all 
$g\in\mbox{Hom}_{\PF}\left(\emptyset,\Gamma\right)$ and  
$g'\in\mbox{Hom}_{\PF}\left(\Gamma',\emptyset\right)$. The 
morphisms of $\foam$ are called {\em foams}. 
\end{defn}

In the next two propositions we prove the ``principal'' relations in $\foam$. 
All other relations that we need are consequences of these and will be proved in 
subsequent lemmas and corollaries.     

\begin{prop}
\label{prop:principal rels1}
The following identities hold in $\foam$: 

 
(The {\em dot conversion} relations) 

$$\figins{-8}{0.3}{plan-i}=0\quad\mbox{if}\quad i\geq N.$$ 
$$\quad\ \
\figins{-8}{0.3}{dplan-km}=0\quad\mbox{if}\quad k\geq N-1.$$
$$\quad\ \
\figins{-8}{0.3}{plan-pqr}=0\quad\mbox{if}\quad p\geq N-2.$$


\bigskip

(The {\em dot migration} relations)
\begin{eqnarray*}
\figins{-12}{0.4}{pdots_b1} &=&  \quad
\figins{-12}{0.4}{pdots_1} \ + \
\figins{-12}{0.4}{pdots_2}
\\ 
\figins{-12}{0.4}{pdots_w1} &=&  \quad
\figins{-12}{0.4}{pdots_12}
\\ \\
\figins{-12}{0.4}{pdots-star-b} \ \ \ &=&  \
\figins{-12}{0.4}{pdots-star-db} \ + \
\figins{-12}{0.4}{pdots_4} 
\\
\figins{-12}{0.4}{pdots-star-w} \ \ \ &=&  \
\figins{-12}{0.4}{pdotsd_w1} \ + \
\figins{-12}{0.4}{pdots-star-bdb}
\\
\figins{-12}{0.4}{pdots-star-bw} \ \ \ &=&  \
\figins{-12}{0.4}{pdots-wb}
\end{eqnarray*}

\bigskip

(The {\em cutting neck} relations) 
$$
\figins{-27}{0.8}{cylinder}=
\sum\limits_{i=0}^{N-1}
\figins{-27}{0.8}{cneck-i}
\quad\text{(CN$_1$)}
$$
$$   
\figins{-28}{0.8}{dcylinder}=-
\sum\limits_{0\leq j\leq i\leq N-2}
\figins{-28}{0.8}{dcneck-ij}
\quad\text{(CN$_2$)}
\qquad\qquad
\figins{-28}{0.8}{cylinder-star}=
-\sum\limits_{0\leq k\leq j\leq i\leq N-3}
\figins{-28}{0.8}{cneck-ijk}
\quad\text{(CN$_*$)}
$$

\bigskip

(The {\em sphere} relations)
$$
\figins{-10}{0.35}{sph-i}=
\begin{cases}1, & i=N-1 \\ 0, & \text{else}\end{cases}\quad 
\text{(S$_1$)} \qquad\quad
\figins{-10}{0.35}{dsph-ij}=
\begin{cases}-1, & i=j=N-2 \\ 0, & \text{else}\end{cases}\quad 
\text{(S$_2$)}
$$

$$
\figins{-10}{0.35}{sph-star-ijk}=
\begin{cases}-1, & i=j=k=N-3 \\ 0, & \text{else}.\end{cases}\quad
\text{(S$_*$)}
$$

\bigskip

(The {\em $\Theta$-foam} relations)
$$
\figins{-10}{0.35}{theta12} = -1  = - \
\figins{-10}{0.35}{theta21} \quad
 (\ThetaGraph)
\qquad \ \text{and} \ \qquad
\figins{-10}{0.42}{theta-star-32} = -1  = - \
\figins{-16}{0.42}{theta-star-23} \quad
 (\ThetaGraph_*).$$
Inverting the orientation of the singular circle of $(\ThetaGraph_*)$ inverts the sign of the 
corresponding foam. A theta-foam with dots on the double facet can be transformed into a 
theta-foam with dots only on the other two facets, using the dot migration relations.


\bigskip

(The {\em Matveev-Piergalini} relation) 
$$
\figins{-24}{0.8}{sh_move1} =
\figins{-24}{0.8}{sh_move2},\quad
\figins{-24}{0.8}{sh_move1-star} =
\figins{-24}{0.8}{sh_move2-star}.
\qquad\text{(MP)}$$
\end{prop}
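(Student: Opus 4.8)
The plan is to verify each family of relations by reducing it to the explicit closed-pre-foam computations already carried out in Section~\ref{sec:KL}, using Definition~\ref{defn:foam}: two $\bQ$-linear combinations of foams between the same webs agree in $\foam$ precisely when they give the same Kapustin-Li evaluation after capping off by arbitrary $g$ and $g'$. So in every case the strategy is to close up the local picture, evaluate with $\langle\;\rangle_{KL}$, and compare.

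First I would dispose of the dot conversion relations: by the glueing property (Section~\ref{sec:glueing}), evaluating any closed pre-foam containing a simple (resp.\ double, triple) facet amounts to evaluating a polynomial in the variables of that facet inside the Jacobi algebra $J_W\cong H^*(\G_{i,N})$, where $x^N$ (resp.\ $\pi_{k,m}$ with $k\ge N-1$, $\pi_{p,q,r}$ with $p\ge N-2$) already vanishes. Hence such a foam is in the kernel of $\langle\;\rangle_{KL}$ regardless of $g,g'$. The dot migration relations follow the same way: along a $(1,1,2)$-edge the associated matrix factorization $MF_1$ forces $\Ext(MF_1,MF_1)$ to be a module over $R/\langle x+y-s,\ xy-t\rangle$, so on any closed pre-foam containing that edge the identities $s=x+y$, $t=xy$ (and their triple-facet analogues $p=x+s$, $q=xs+t$, $r=xt$ from $MF_2$) hold inside the relevant Jacobi-algebra factor; translating $x,y\leftrightarrow\bdot$, $s\leftrightarrow\pi_{1,0}$, $t\leftrightarrow\pi_{1,1}$, etc., and expanding via Lemma~\ref{lem1} and the two-variable Schur multiplication~(\ref{mnoz2}) gives exactly the stated pictures. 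The sphere relations (S$_1$), (S$_2$), (S$_*$) are immediate from the subsection ``Spheres'': a dotted sphere evaluates to $\epsilon(f)$, which is $1$, $-1$, $-1$ only in the extremal multi-index cases. The $\Theta$-foam relations are precisely the normalized outputs of the $(1,1,2)$- and $(1,2,3)$-theta computations: we showed $\langle KL_{\Theta_1}x^{N-1}y^{N-2}\rangle_{KL}=(N+1)^2$ etc., and after dividing by $(N+1)^{2e_{112}+3e_{123}}$ (here one cycle of each relevant type) this becomes $\pm 1$; the sign change under reversing the singular circle's orientation comes from $(MF)^*$ replacing $MF_2$, i.e.\ passing to the homotopy inverse $O_v$, which inverts the class. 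Dot migration on the double facet of a theta-foam is then just the migration relations already proved.

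The cutting neck relations I would get from the Frobenius-algebra structure on $H^*(\G_{i,N})$: for a simple facet, $\{\pi_i\}=\{x^i\}_{i=0}^{N-1}$ is a basis with dual basis $\widehat{x^i}=x^{N-1-i}$ with respect to $\epsilon$, so inserting $\sum_i x^i\otimes x^{N-1-i}$ along a cut-neck is the identity on the identity cobordism — this is the standard ``cutting a handle'' identity, and closing up against any $g,g'$ reduces it to $\sum_i \epsilon(\text{(stuff)}\,x^i)\,x^{N-1-i}=\text{(stuff)}$, valid in the Frobenius algebra. For (CN$_2$) and (CN$_*$) the same argument uses the dual basis formula~(\ref{eq:db}), $\widehat{\pi_\lambda}=(-1)^{\lfloor k/2\rfloor}\pi_{N-k-\lambda_k,\dots,N-k-\lambda_1}$, which accounts for the overall minus sign (from $(-1)^{\lfloor 2/2\rfloor}=(-1)^{\lfloor 3/2\rfloor}=-1$) and for the index range $0\le\cdots\le i\le N-2$ (resp.\ $N-3$) in the sums. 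Finally the Matveev-Piergalini relation (MP) is the most delicate point and I expect it to be the main obstacle: both sides are open pre-foams with corners whose only difference is near a single singular vertex, so one must check that the elements $O_v$ associated to the two sides of (MP) correspond under the glueing property. Concretely, after pinching the vertical edges both foams become morphisms $\emptyset\to\Gamma_2\cup_v\overline{\Gamma_1}$ valued in the same $\Ext$-group (the one in~(\ref{eq:vertex})), whose graded dimension is $1+q(\dots)$, so the degree-zero part is one-dimensional; hence the two sides agree up to a scalar, and that scalar is fixed to be $1$ by a single normalized closed-pre-foam evaluation (capping both sides off the same way and comparing, or invoking that a consistent global choice of $w_v$ was made with equally many type-I and type-II vertices so that signs cancel). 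I would present (MP) last, leaning on the one-dimensionality of the relevant $\Ext$-group together with one explicit normalization check, rather than attempting a direct homotopy between the matrix factorizations.
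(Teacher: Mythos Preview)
Your treatment of the dot conversion, dot migration, sphere, theta-foam, and cutting-neck relations is essentially the paper's own: these are all read off from the computations in Section~\ref{sec:KL}, and your Frobenius/dual-basis explanation of (CN$_i$) using~(\ref{eq:db}) is exactly what the paper has in mind when it cites the Khovanov--Rozansky cutting formula. One small slip: the sign change of the $(1,2,3)$-theta under reversing the singular circle comes from replacing the edge factorization by its dual (as you say at first), not from ``passing to the homotopy inverse $O_v$''---there is no singular vertex in a theta foam, so $O_v$ is irrelevant there.

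The genuine divergence is in (MP), and here your argument has a gap. You try to argue via one-dimensionality of a degree-zero Ext piece, citing the group in~(\ref{eq:vertex}); but that computation is for the local graph $\gamma_v$ of a \emph{single} singular vertex, not for the boundary of an (MP) foam, so the dimension count does not apply as stated. More importantly, you are working harder than necessary. The paper's observation is that each side of (MP) contains \emph{two} singular vertices, one of type I and one of type II, and by construction the element $O_v$ assigned to a type II vertex is the cohomology class of the homotopy inverse of the equivalence $w_v$ assigned to a type I vertex. Under the glueing property the contribution of this pair is therefore $w_v\circ w_v^{-1}=\mathrm{id}$, so the two sides of (MP) give literally the same element of the relevant Ext group---no dimension count and no separate normalization check is needed. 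Your parenthetical ``or invoking that a consistent global choice of $w_v$ was made'' is gesturing at this, but the point is sharper: it is not about signs cancelling globally, it is that the two vertices in (MP) are inverses of each other by definition.
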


\begin{proof}
The dot conversion and migration relations, the sphere relations, 
the theta foam relations have already been proved in section~\ref{sec:KL}. 

The cutting neck relations are special cases of formula (5.68) in 
\cite{KR-LG}, where $O_j$ and $O_j^*$ can be read off from our equations (\ref{eq:db}).

The Matveev-Piergalini (MP) relation is an immediate consequence of the choice of input 
for the singular vertices. Note that in this relation there are always two singular 
vertices of different type. The elements in the $\Ext$-groups associated to those two 
types of singular vertices are inverses of each other, which implies exactly the 
(MP) relation by the glueing properties explained in subsection~\ref{sec:glueing}.  
\end{proof}

The following identities are a consequence of the dot and the theta relations. 

\begin{lem} 
\label{lem:theta-pqrkli}
$$
\figins{-17}{0.55}{theta-pqrkli}=
\begin{cases}
  -1&\text{if}\quad(p,q,r)=
(N-3-i,N-2-k,N-2-j)\\
-1&\text{if}\quad(p,q,r)=(N-3-k, N-3-j,N-1-i)\\
1&\text{if}\quad (p,q,r)=(N-3-k,N-2-i,N-2-j)\\
0&\text{else}
\end{cases}
$$
Note that the first three cases only make sense if 
\begin{eqnarray*}
&N-2\geq j\geq k\geq i+1
\geq 1\\
&N-1\geq i\geq j+2\geq k+2\geq 2\\
&N-2\geq j\geq i\geq k+1\geq 1
\end{eqnarray*}
respectively.  
\end{lem}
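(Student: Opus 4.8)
The plan is to reduce the evaluation of the $(1,2,3)$-theta pre-foam with dots on \emph{all three} facets to the two already-computed $\Theta$-foam relations ($\ThetaGraph$ and $\ThetaGraph_*$) together with the dot-conversion and dot-migration relations of Proposition~\ref{prop:principal rels1}. First I would set up notation: the theta pre-foam in the lemma has a simple facet carrying $i$ dots (i.e.\ $x^i$), a double facet carrying $\pi_{j,k}(s,t)$, and a triple facet carrying $\pi_{p,q,r}(p,q,r)$. Using the cutting-neck relation (CN$_*$) or (CN$_2$) one can always move all the dots onto two of the three facets; the cleanest route is to use dot migration along the singular circle, where the relevant relations say that on the variables $x,s,t$ one has $p=x+s$, $q=xs+t$, $r=xt$ (the elementary symmetric functions of $x$ and the two ``roots'' of the double facet). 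Hence $\pi_{p,q,r}(p,q,r)$ can be rewritten, via Lemma~\ref{lem1} applied with $x_3=x$ and $(x_1,x_2)$ the roots of the double facet, as a $\bZ$-linear combination $\sum_{(a,b,c)\sqsubset(p,q,r)} \pi_{a,b}(s,t)\,x^c$ of dots on the double and simple facets only.

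Next I would combine this expansion with the dots $x^i$ already on the simple facet and $\pi_{j,k}(s,t)$ on the double facet, and multiply everything out using Pieri's rule~(\ref{mnoz2}) for $\pi_{j,k}\pi_{a,b}$ in two variables. This expresses the decorated theta as a sum of terms of the form (scalar)$\,\times\,$(theta with $x^{i+c}$ on the simple facet and a single Schur polynomial $\pi_{x,y}(s,t)$ on the double facet). Now I invoke the already-established $(1,1,2)$-$\Theta$ computation from section~\ref{sec:KL}: the theta with $N-2$ dots on one simple facet and $N-1$ on the other has value $\pm1$, and after renaming simple/double facets appropriately (the double facet here plays the role of ``two simple facets'' $\pi_{a,b}(x_1,x_2)$), the surviving terms are exactly those for which the triple of exponents matches $(N-2,N-2,N-1)$ in some order; everything else is killed by the dot-conversion relations $x^N=0$, $\pi_{\geq N-1}=0$ on the double facet, and $\pi_{\geq N-2}=0$ on the triple facet. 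Matching the exponents $i+c$ and the two-variable partition $(x,y)$ against the permutations of $(N-2,N-2,N-1)$ produces precisely the three cases listed: $(p,q,r)=(N-3-i,N-2-k,N-2-j)$, $(N-3-k,N-3-j,N-1-i)$, and $(N-3-k,N-2-i,N-2-j)$, with signs $-1,-1,+1$ coming from the sign of the corresponding $(1,1,2)$-theta value and the sign bookkeeping in Lemma~\ref{lem1}/Pieri. The ``only makes sense if'' inequalities are just the conditions $(a,b,c)\sqsubset(p,q,r)$ and $x\geq y\geq0$ translated back.

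The main obstacle I expect is the sign and coefficient bookkeeping: one must check that in each of the three surviving configurations exactly \emph{one} term in the double expansion (Lemma~\ref{lem1} $\times$ Pieri) contributes, and that its coefficient is exactly $1$, so that the sign is dictated solely by the sign of the underlying $(1,1,2)$-$\Theta$ evaluation and the trace form~(\ref{trag}) on the triple facet. Concretely, given a target permutation of $(N-2,N-2,N-1)$ for the exponents $(\ell_1,\ell_2)$ of $(x_1,x_2)$ on the double facet and $i+c$ for $x$, one solves for $c$ (hence reads off $r-$something) and for the partition $(a,b)$ appearing with nonzero Pieri coefficient in $\pi_{j,k}\pi_{a,b}\ni\pi_{x,y}$; uniqueness of the solution, when it exists, is what forces the clean values $0,\pm1$. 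I would verify this in each of the three cases separately, using the explicit description of when $\pi_{x,x}$ or $\pi_{x+1,x}$ lies in a product $\pi_{i,j}\pi_{a,b}$ recorded right after~(\ref{mnoz2}). Once this combinatorial matching is done, the statement of the lemma follows immediately, and I would phrase the write-up as: ``apply dot migration and Lemma~\ref{lem1} to push all dots onto the simple and double facets, expand by Pieri, and reduce to the $(1,1,2)$-$\Theta$ relation; a case check on the exponents gives the four cases.''
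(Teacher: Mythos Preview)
Your overall strategy—use dot migration and Lemma~\ref{lem1} to push the triple-facet decoration $\pi_{p,q,r}$ onto the simple and double facets, then expand the product $\pi_{j,k}\pi_{a,b}$ by Pieri and evaluate term by term—is exactly what the paper does. But there is a genuine gap at the base of the reduction.

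After pushing all dots off the triple facet, each summand is still a $(1,2,3)$-theta foam, now with \emph{undecorated triple facet}, dots $\pi_{x,y}$ on the double facet, and $i+c$ dots on the simple facet. You cannot ``rename'' the double facet as two simple facets; the $(1,1,2)$-theta computation from section~\ref{sec:KL} is irrelevant here. What you actually need is the value of $\Theta(\pi_{0,0,0},\pi_{x,y},m)$ for all $(x,y,m)$, and Proposition~\ref{prop:principal rels1} supplies only the single value $\Theta(\pi_{0,0,0},\pi_{N-2,N-2},N-3)=-1$ (relation~$\ThetaGraph_*$). The paper spends the first part of its proof deriving the remaining two nonzero base cases, $\Theta(\pi_{0,0,0},\pi_{N-3,N-3},N-1)=-1$ and $\Theta(\pi_{0,0,0},\pi_{N-2,N-3},N-2)=+1$, by migrating dots the \emph{other} way (onto the triple facet) and killing via the dot-conversion relation $\pi_{p,q,r}=0$ for $p\ge N-2$. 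Your claim that the surviving base terms are those whose exponents match ``$(N-2,N-2,N-1)$ in some order'' is simply wrong: the three nonzero base triples $(j,k,i)$ are $(N-2,N-2,N-3)$, $(N-3,N-3,N-1)$, $(N-2,N-3,N-2)$, which are not permutations of one another.

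Once the correct base cases are in hand the paper proceeds essentially as you suggest, but with a case split ($i\le k$ versus $i>k$, and within the latter $j\ge i$, $j\le i-2$, $j=i-1$) because two of the three base values carry opposite signs and can both appear in the expansion; one must verify that exactly one survives in each regime (and that both cancel in the $j=i-1$ regime, giving the fourth family of zero values). Your assertion that ``exactly one term contributes'' is the right target, but it requires this case analysis and is not automatic from Pieri and Lemma~\ref{lem1}.
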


\begin{proof}
We denote the value of a theta foam by 
$\Theta(\pi_{p,q,r},\pi_{j,k},i)$. Since the $q$-degree of a non-decorated theta foam is 
equal to $-(N-1)-2(N-2)-3(N-3)=-(6N-14)$, we can have nonzero values of 
$\Theta(\pi_{p,q,r},\pi_{j,k},i)$ only if $p+q+r+j+k+i=3N-7$. Thus, if the 3-facet is not 
decorated, i.e. $p=q=r=0$, we have only four possibilities for the triple $(j,k,i)$ -- 
namely $(N-2,N-2,N-3)$, $(N-2,N-3,N-2)$, $(N-2,N-4,N-1)$ and 
$(N-3,N-3,N-1)$.
By Proposition~\ref{prop:principal rels1} we have 
$$\Theta(\pi_{0,0,0},\pi_{N-2,N-2},N-3)=-1.$$
However by dot migration, Lemma \ref{lem1} and the fact that 
$\pi_{p,q,r}=0$ if $p\ge N-2$, we have
\begin{eqnarray*}
0=\Theta(\pi_{N-2,N-2,N-3},\pi_{0,0},0)&=&\Theta(\pi_{0,0,0},
\pi_{N-2,N-2},N-3)+\Theta(\pi_{0,0,0},\pi_{N-2,N-3},N-2),\\
0=\Theta(\pi_{N-1,N-3,N-3},\pi_{0,0},0)&=&\Theta(\pi_{0,0,0},
\pi_{N-2,N-3},N-2)+\Theta(\pi_{0,0,0},\pi_{N-3,N-3},N-1),\\
0=\Theta(\pi_{N-1,N-2,N-4},\pi_{0,0},0)&=&\Theta(\pi_{0,0,0},
\pi_{N-2,N-2},N-3)+\Theta(\pi_{0,0,0},\pi_{N-2,N-3},N-2)+\\
&&+\Theta(\pi_{0,0,0},\pi_{N-2,N-4},N-1).
\end{eqnarray*}
Thus, the only nonzero values of the theta foams, when the 3-facet is nondecorated are
$$\Theta(\pi_{0,0,0},\pi_{N-2,N-2},N-3)=\Theta(\pi_{0,0,0},
\pi_{N-3,N-3},N-1)=-1,$$
$$\Theta(\pi_{0,0,0},\pi_{N-2,N-3},N-2)=+1.$$

Now we calculate the values of the general theta foam. Suppose first that $i\le k$. 
Then we have 
\begin{equation}
\Theta(\pi_{p,q,r},\pi_{j,k},i)=\Theta(\pi_{p,q,r},\pi_{i,i}\pi_{j-i,k-i},i)=
\Theta(\pi_{p+i,q+i,r+i},\pi_{j-i,k-i},0),
\label{th1}
\end{equation}
by dot migration. In order to calculate $\Theta(\pi_{x,y,z},\pi_{w,u},0)$
for $N-3\ge x\ge y\ge z \ge 0$ and $N-2 \ge w\ge u\ge 0$, we use Lemma \ref{lem1}. By 
dot migration we have
\begin{equation}
\Theta(\pi_{x,y,z},\pi_{w,u},0)=\sum_{(a,b,c)\sqsubset(x,y,z)}\Theta(\pi_{0,0,0},\pi_{w,u}\pi_{a,b},c).
\label{f1}
\end{equation}
Since $c\le p\le N-3$, a summand on the r.h.s. of (\ref{f1}) can be nonzero only 
for $c=N-3$ and $a$ and $b$ such that $\pi_{N-2,N-2}\in \pi_{w,u}\pi_{a,b}$, i.e. $a=N-2-u$ and $b=N-2-w$. Hence the value of (\ref{f1}) is equal to $-1$ if 
\begin{equation}
(N-2-u,N-2-w,N-3)\sqsubset(x,y,z),
\label{f2}
\end{equation}
and $0$ otherwise. Finally, (\ref{f2}) is equivalent to $x+y+z+w+u=3N-7$, $x\ge N-2-u\ge y$, $y\ge N-2-w\ge z$ and $x\ge N-3 \ge z$, and so we must have $u>0$ and
\begin{eqnarray*}
x&=&N-3,\\
y&=&N-2-u,\\
z&=&N-2-w.
\end{eqnarray*}
Going back to (\ref{f1}), we have that the value of theta is equal to $0$ if $l=i$, and in the case $l>i$ it is nonzero (and equal to $-1$) iff
\begin{eqnarray*}
p&=&N-3-i,\\
q&=&N-2-k,\\
r&=&N-2-j,
\end{eqnarray*}
which gives the first family.

Suppose now that $k<i$. As in (\ref{th1}) we have
\begin{equation}
\Theta(\pi_{p,q,r},\pi_{j,k},i)=\Theta(\pi_{p+k,q+k,r+k},\pi_{j-k,0},i-k).
\label{th2}
\end{equation}
Hence, we now concentrate on $\Theta(\pi_{x,y,z},\pi_{w,0},u)$
for $N-3\ge x\ge y\ge z \ge 0$, $N-2 \ge w\ge 0$ and $N-1\ge u\ge 1$. Again, by using Lemma \ref{lem1} we have
\begin{equation}
\Theta(\pi_{x,y,z},\pi_{w,0},0)=\sum_{(a,b,c)\sqsubset(x,y,z)}\Theta(\pi_{0,0,0},\pi_{w,0}\pi_{a,b},u+c).
\label{f3}
\end{equation}
Since $a\le N-3$, we cannot have $\pi_{N-2,N-2}\in \pi_{w,0}\pi_{a,b}$ and we can have 
$\pi_{N-2,N-3}\in \pi_{w,0}\pi_{a,b}$ iff $a=N-3$ and $b=N-2-w$. In this case we have a nonzero 
summand (equal to $1$) iff $c=N-2-u$. Finally $\pi_{N-3,N-3}\in \pi_{w,0}\pi_{a,b}$ iff 
$a=N-3$ and $b=N-3-w$. In this case we have a nonzero summand (equal to $-1$) iff $c=N-1-u$. 
Thus we have a summand on the r.h.s. of (\ref{f3}) equal to $+1$ iff 
\begin{equation}
(N-3,N-2-w,N-2-u)\sqsubset (x,y,z),
\label{rel1}
\end{equation} 
and a summand equal to $-1$ iff 
\begin{equation}
(N-3,N-3-w,N-1-u)\sqsubset (x,y,z).
\label{rel2}
\end{equation} 
Note that in both above cases we must have $x+y+z+w+u=3N-7$, $x=N-3$ and $u\ge 1$. Finally, 
the value of the r.h.s of (\ref{f3}) will be nonzero iff exactly one of (\ref{rel1}) and 
(\ref{rel2}) holds.

In order to find the value of the sum on r.h.s. of (\ref{f3}), we split the rest of the proof in three cases according to the relation between $w$ and $u$.

If $w\ge u$, (\ref{rel1}) is equivalent to $y\ge N-2-w$, $z\le N-2-w$, while (\ref{rel2}) is 
equivalent to $y\ge N-3-w$, $z\le N-3-w$ and $u\ge 2$. Now, we can see that the sum is nonzero 
and equal to $1$ iff $z=N-2-w$ and so $y=N-2-u$. Returning to (\ref{th2}), we have that 
the value of $\Theta(\pi_{p,q,r},\pi_{j,k},i)$ is equal to $1$ for 
\begin{eqnarray*}
p&=&N-3-k,\\
q&=&N-2-i,\\
r&=&N-2-j,
\end{eqnarray*}
for $N-2\ge j \ge i \ge k+1 \ge 1$, which is our third family.

If $w\le u-2$, (\ref{rel1}) is equivalent to $y\ge N-2-w$, $z\le N-2-u$ and $u\le N-2$ while (\ref{rel2}) is equivalent to $y\ge N-3-w$, $z\le N-1-u$. Hence, in this case we have that the total sum is nonzero and equal to $-1$ iff $y=N-3-w$ and $z=N-1-u$, which by returning to (\ref{th2}) gives that the value of  $\Theta(\pi_{p,q,r},\pi_{j,k},i)$ is equal to $-1$ for\begin{eqnarray*}
p&=&N-3-k,\\
q&=&N-3-j,\\
r&=&N-1-i,
\end{eqnarray*}
for $N-3\ge i-2\ge j \ge k \ge 0$, which is our second family.

Finally, if $u=w+1$ (\ref{rel1}) becomes equivalent to $y\ge N-2-w$ and $z\le N-3-w$, 
while (\ref{rel2}) becomes $y\ge N-3-w$ and $z\le N-3-w$. Thus, in order to have a nonzero sum, 
we must have $y=N-3-w$. But in that case, because of the fixed total sum of indices, we 
would have $z=N-1-u=N-2-w>N-3-w$, which contradicts (\ref{rel2}). Hence, in this case, 
the total value of the theta foam is $0$. 
\end{proof}
As a direct consequence of the previous theorem, we have
\begin{cor}
\label{cor:theta-pqrkli}
For fixed values of $j$, $k$ and $i$, if $j\ne i-1$ and $k\ne i$, there is exactly one triple $(p,q,r)$ such that the value of $\Theta(\pi_{p,q,r},\pi_{j,k},i)$ is nonzero. Also, if $j=i-1$ or 
$k=i$, the value of $\Theta(\pi_{p,q,r},\pi_{j,k},i)$ is equal to $0$ for every triple $(p,q,r)$. 
Hence, for fixed $i$, there are $n-1\choose 2$ 5-tuples $(p,q,r,j,k)$ such that 
$\Theta(\pi_{p,q,r},\pi_{j,k},i)$ is nonzero.

Conversely, for fixed $p$, $q$ and $r$, there always exist three different triples $(j,k,i)$ 
(one from each family), such that $\Theta(\pi_{p,q,r},\pi_{j,k},i)$ is nonzero.

Finally, for all $p$, $q$, $r$, $j$, $k$ and $i$, we have
$$\Theta(\pi_{p,q,r},\pi_{j,k},i)=\Theta(\hat{\pi}_{p,q,r},\hat{\pi}_{j,k},N-1-i).$$
\end{cor}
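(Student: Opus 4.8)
The plan is to deduce everything from the explicit list of nonzero theta foams in Lemma~\ref{lem:theta-pqrkli}, exploiting the fact that on each of the three families there the assignment $(j,k,i)\mapsto(p,q,r)$ is affine, hence a bijection onto its image. Throughout I would take $0\le k\le j\le N-2$ and $0\le i\le N-1$, since $\Theta(\pi_{p,q,r},\pi_{j,k},i)$ vanishes for trivial reasons otherwise.

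For the first statement I would fix such $j,k,i$ and locate $i$. After deleting the common hypothesis $j\ge k$ (automatic for a two-variable Schur polynomial), the three families of Lemma~\ref{lem:theta-pqrkli} require respectively $k\ge i+1$, $i\ge j+2$, and $k+1\le i\le j$. Running through the five cases $i\le k-1$, $i=k$, $k+1\le i\le j$, $i=j+1$, $i\ge j+2$ shows that exactly one of these three conditions can hold when $i\ne k$ and $i\ne j+1$, and that none holds when $i=k$ or $i=j+1$, i.e.\ when $k=i$ or $j=i-1$; this is precisely the dichotomy in the statement, and when a family is active its parametrization pins down $(p,q,r)$ uniquely.

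For the count I would fix $i$ and tally the pairs $(j,k)$ contributed by each family: the first contributes the pairs with $i+1\le k\le j\le N-2$, namely $\binom{N-1-i}{2}$; the second those with $0\le k\le j\le i-2$, namely $\binom{i}{2}$; the third those with $i\le j\le N-2$ and $0\le k\le i-1$, namely $i(N-1-i)$. No $5$-tuple is counted twice, since at most one family is active for a given $(j,k,i)$, so the total is $\binom{N-1-i}{2}+\binom{i}{2}+i(N-1-i)=\binom{N-1}{2}$ by the identity $\binom{a}{2}+\binom{b}{2}+ab=\binom{a+b}{2}$ applied with $a+b=N-1$. For the converse I would invert the three affine maps: from $(p,q,r)$ with $0\le r\le q\le p\le N-3$ one gets $(j,k,i)=(N-2-r,N-2-q,N-3-p)$ from the first family, $(N-3-q,N-3-p,N-1-r)$ from the second, and $(N-2-r,N-3-p,N-2-q)$ from the third; one checks that the bounds $0\le r\le q\le p\le N-3$ are equivalent to the three sets of validity inequalities in Lemma~\ref{lem:theta-pqrkli}, so all three triples give nonzero values; they are pairwise distinct by $p\ge q\ge r$ (compared as triples, since individual entries may agree); and by injectivity of each parametrization together with the vanishing in all other cases of Lemma~\ref{lem:theta-pqrkli}, there is no fourth.

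For the duality I would substitute the dual-basis formulas of~(\ref{eq:db}): in two and three variables the sign there is $(-1)^1$, so $\hat\pi_{j,k}=-\pi_{N-2-k,N-2-j}$ and $\hat\pi_{p,q,r}=-\pi_{N-3-r,N-3-q,N-3-p}$, and the two minus signs cancel inside the theta foam. The assertion thus reduces to $\Theta(\pi_{p,q,r},\pi_{j,k},i)=\Theta(\pi_{N-3-r,N-3-q,N-3-p},\pi_{N-2-k,N-2-j},N-1-i)$, which I would verify family by family: a direct check shows that the substitution $(p,q,r,j,k,i)\mapsto(N-3-r,N-3-q,N-3-p,N-2-k,N-2-j,N-1-i)$ carries the first family to the second and the second to the first, in both cases preserving the value $-1$, and carries the third family to itself, preserving the value $+1$; since this substitution is an involution, a $6$-tuple lying in none of the three families is sent to another such $6$-tuple, so both sides vanish. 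The only real work is these two case analyses—placing $i$ relative to $k$ and $j$, and matching the families under the duality substitution—and the second of these, the family-matching, is the fiddliest; but both are elementary, and no input beyond Lemma~\ref{lem:theta-pqrkli} is needed, which is why the corollary is ``a direct consequence''.
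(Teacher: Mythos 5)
Your proof is correct and takes essentially the same approach the paper intends: the paper offers no argument at all, stating the corollary as ``a direct consequence of the previous theorem,'' so the only route is the one you took, reading everything off the explicit list of nonzero theta foams in Lemma~\ref{lem:theta-pqrkli}. Your case analyses (placing $i$ relative to $k$ and $j$; matching families under the duality involution), the counting identity $\binom{a}{2}+\binom{b}{2}+ab=\binom{a+b}{2}$, and the cancellation of the two $(-1)^{\lfloor k/2\rfloor}$ signs from~(\ref{eq:db}) all check out; you have simply supplied the verification the authors left implicit (and silently corrected the paper's typo $\binom{n-1}{2}$ to $\binom{N-1}{2}$).
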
 
The following relations are an immediate consequence of 
Lemma~\ref{lem:theta-pqrkli}, 
Corollary~\ref{cor:theta-pqrkli} and (CN$_i$), $i=1,2,*$.
\begin{cor} 
\label{cor:bubble-star}

\begin{equation}
\label{eq:bubble-pqri}
\figins{-18}{0.6}{bubble-pqri}=
\begin{cases}
-\figins{-17}{0.4}{dplan-qr}   &\text{if}\quad p=N-3-i\\ 
-\figins{-17}{0.4}{dplan-ppqq} &\text{if}\quad r=N-1-i\\ 
 \figins{-17}{0.4}{dplan-ppr}  &\text{if}\quad q=N-2-i\\
 \qquad 0                      &\text{else}
\end{cases}
\end{equation}

\bigskip

\begin{equation}
\label{eq:bubble-ikm-star}
\figins{-18}{0.6}{bubble-ikm-star}=
\begin{cases}
-\figins{-15}{0.4}{plan-kmi} & \text{if}\quad N-2\geq k\geq m\geq i+1\geq 1\  \\
-\figins{-15}{0.4}{plan-ikm} & \text{if}\quad N-1\geq i\geq k+2\geq m+2\geq 2\ \\ 
\ \ \ 
\figins{-15}{0.4}{plan-kim} & \text{if}\quad N-2\geq k\geq i\geq m+1\geq 1\ \\ 
\qquad 0 &\text{else}
\end{cases}
\end{equation}

\begin{equation}
\label{eq:bubble-pqrkm-star}
\figins{-18}{0.64}{bubble-pqrkm-star}=
\begin{cases}\ \ \
- \figins{-13}{0.35}{plan-ppp} & \text{if}\quad q= N-2-m,\,\, r=N-2-k\  \\ \ \ \
- \figins{-13}{0.35}{plan-r} & \text{if}\quad p=N-3-m,\,\, q=N-3-k\ \\ 
\figins{-13}{0.35}{plan-qq} & \text{if}\quad p=N-3-m,\,\, r=N-2-k \\ 
\qquad 0 &\text{else}
\end{cases}
\end{equation}
\end{cor}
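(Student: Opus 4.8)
The plan is to derive Corollary~\ref{cor:bubble-star} from Lemma~\ref{lem:theta-pqrkli} and Corollary~\ref{cor:theta-pqrkli} by applying the cutting neck relations (CN$_1$), (CN$_2$), (CN$_*$) to the bubble foams on the left-hand side. The key observation is that a bubble foam, i.e. a closed portion of a foam consisting of two sheets meeting along two singular circles, becomes a closed theta-foam once we cut its ``waist'' with the appropriate (CN) relation; the coefficients that appear are exactly the theta-foam values computed in Lemma~\ref{lem:theta-pqrkli}.

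More precisely, for \eqref{eq:bubble-ikm-star} I would start from the left-hand side, a double facet carrying a $\pi_{k,m}$-decorated bubble (with a simple facet running through it), and apply (CN$_2$) along the double facet that passes through the bubble. This expresses the bubble as a sum over $0\le l'\le l\le N-2$ of foams in which the bubble has been severed: each summand is a product of a theta-foam $\Theta(\pi_{p,q,r},\pi_{l,l'},i)$-type evaluation (depending on the decorations $k,m$ and $i$ present) times a plain decorated simple facet carrying the complementary dots. By Corollary~\ref{cor:theta-pqrkli}, for fixed $i$ (and generic $k,m$) exactly one term in this sum survives, and Lemma~\ref{lem:theta-pqrkli} tells us both its sign and which decorated simple facet $\pi_{*,*,*}$ remains; reading off the three families of that lemma gives precisely the three nonzero cases of \eqref{eq:bubble-ikm-star}, with the degeneracy conditions $j=i-1$ or $k=i$ of the corollary accounting for the ``else $0$'' case. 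The treatment of \eqref{eq:bubble-pqri} is identical but with the roles of the facets permuted: one cuts with (CN$_1$) (or (CN$_*$)) so that the surviving closed piece is again a theta-foam, now with the triple-facet decoration $\pi_{p,q,r}$ fixed, and the ``converse'' part of Corollary~\ref{cor:theta-pqrkli} (three triples $(j,k,i)$, one per family) produces the three cases. Likewise \eqref{eq:bubble-pqrkm-star} comes from cutting the remaining bubble type and invoking the same dictionary.

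The routine bookkeeping — matching the multi-index shifts $(p,q,r)=(N-3-i,N-2-k,N-2-j)$ etc.\ coming out of Lemma~\ref{lem:theta-pqrkli} against the cases as written, and checking that the three inequality ranges in each case of the corollary coincide with the ``only make sense if'' ranges of the lemma — is the bulk of the work but is mechanical. The main obstacle I anticipate is purely organizational: making sure the (CN) relation is applied along the correct facet so that what is pinched off really is a \emph{closed} theta-foam (and not an open one or a sphere), and keeping track of the overall sign, since (CN$_2$) and (CN$_*$) themselves carry a global minus sign which must be combined correctly with the theta-foam sign from Lemma~\ref{lem:theta-pqrkli}. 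Once the correct cut is identified in each of the three bubble types, the statement falls out term by term, which is why the authors phrase it as an ``immediate consequence''.
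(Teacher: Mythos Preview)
Your proposal is correct and matches the paper's own argument exactly: the paper states that the corollary is ``an immediate consequence of Lemma~\ref{lem:theta-pqrkli}, Corollary~\ref{cor:theta-pqrkli} and (CN$_i$), $i=1,2,*$'', which is precisely the cut-the-neck-to-get-a-theta-foam mechanism you describe. One small caution on the organizational point you already flagged: in each of the three identities the cutting neck relation must be applied to the facet that \emph{continues outside} the bubble (e.g.\ for \eqref{eq:bubble-ikm-star} the ambient facet is triple, so it is (CN$_*$) rather than (CN$_2$) that pinches off the closed theta-foam), but once the correct cut is chosen the rest is exactly as you say.
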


\begin{lem}
\label{lem:theta-ijkl}
$$\figins{-17}{0.55}{theta-mijkk}=
\begin{cases}
-1&\mbox{if}\quad m+j=N-1=i+k+1\\
+1&\mbox{if}\quad j+k=N-1=i+m+1, 
\end{cases}$$
\end{lem}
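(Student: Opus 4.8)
\textbf{Proof plan for Lemma~\ref{lem:theta-ijkl}.}

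The theta-foam in question has a double 2-facet carrying $\pi_{i,i}$ (the double facet decorated with $\pi_{i,i}$ read off from the picture \texttt{theta-mijkk}), together with two 1-facets carrying $x^m$ and $x^j$ and... actually the cleanest route is to reduce this ``all double/simple'' theta to the $(1,1,2)$-theta computation already carried out in Subsection~(1,1,2)-Theta, or to the basic sphere/theta relations of Proposition~\ref{prop:principal rels1}. So the first step is to set up notation: write $\Theta_2(\pi_{j,k},m)$ for the value of the theta-foam with the double facet labelled $\pi_{j,k}$ and the two simple facets labelled $x^m$ (on one) and undotted (on the other), so that the foam in the statement is $\Theta_2(\pi_{i,i}\cdot(\text{extra dots}),\dots)$; then use dot migration (Proposition~\ref{prop:principal rels1}) to push all dots off the double facet onto the two simple facets, landing in the normal form $\Theta_2(\pi_{0,0},a,b)$ for various $a,b$, whose values are exactly the $(1,1,2)$-theta values: nonzero ($=-1$ for one ordering, $=+1$ for the other) precisely when $\{a,b\}=\{N-1,N-2\}$.

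Concretely, I would expand $\pi_{i,i}$ on the double facet using $\pi_{i,i}=\pi_{1,1}^{\,i}$ (it is a power of the ``$\wdot$'' generator), migrate it as $i$ pairs of dots distributed over the two simple facets, and similarly migrate the remaining $\pi_{j,k}$-type decoration via Lemma~\ref{lem1}/Pieri (\ref{mnoz2}) to get a sum $\sum \Theta_2(\pi_{0,0},x^a y^b)$ over admissible $(a,b)$ with $a+b$ fixed by the degree constraint. By the $(1,1,2)$-theta result only the terms with $(a,b)=(N-1,N-2)$ or $(N-2,N-1)$ survive, contributing $+1$ and $-1$ respectively. Collecting the combinatorics of which $(a,b)$ are actually hit, one finds exactly one surviving term in each of the two stated regimes: the regime $m+j=N-1=i+k+1$ produces the single term with the $-1$ sign, and $j+k=N-1=i+m+1$ produces the single term with the $+1$ sign; in all other cases either the degree constraint $m+j+k+2i = \text{(total)}$ fails or no admissible $(a,b)$ equals $(N-1,N-2)$ up to order, giving $0$. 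Throughout, the normalization of Subsection~\ref{sec:norm} is what makes these values land on $\pm1$ rather than $\pm(N+1)^2$.

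An alternative — and probably shorter — route avoids re-deriving anything: observe that the foam here is obtained from a $(1,1,2)$-theta by capping/gluing with a cutting-neck decomposition (CN$_2$) on the double facet, or is literally a special case of Corollary~\ref{cor:bubble-star} composed with a sphere relation; then the statement follows by a bookkeeping argument tracking which index triples make $\Theta(\pi_{0,0,0},\pi_{\cdot,\cdot},\cdot)$ nonzero, exactly as in the proof of Lemma~\ref{lem:theta-pqrkli}. I would present this version: apply dot migration to reach a canonical form, then quote the three nonzero values of the nondecorated $(1,1,2)$-theta, namely $\Theta(\pi_{0,0},N-1,N-2)=-1$ and $\Theta(\pi_{0,0},N-2,N-1)=+1$ together with vanishing otherwise, and finally solve the two linear systems $m+j=N-1$, $i+k+1=N-1$ (resp.\ $j+k=N-1$, $i+m+1=N-1$) to confirm these are precisely the cases with a unique surviving summand.

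\textbf{Main obstacle.} The real work is purely combinatorial: after dot migration one gets a multiple sum of Schur-polynomial products (via Pieri, (\ref{mnoz2}), and Lemma~\ref{lem1}), and one must show that in each claimed regime exactly one term hits the top class $\pi_{N-2,N-2}$ on the double facet (equivalently $x^{N-1}y^{N-2}$ after full migration) while all others vanish by the dot-conversion relations $\pi_{a,b}=0$ for $a\ge N-1$ or by degree. Getting the signs right — in particular the single overall sign flip between the two families, which comes from swapping the roles of the two simple facets — is the error-prone part and the step I would double-check most carefully.
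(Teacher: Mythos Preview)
Your general plan --- migrate the double-facet decoration onto the two simple facets and then invoke the basic $(1,1,2)$-theta values from $(\ThetaGraph)$ --- is precisely the paper's approach. But your reading of the foam is off and your execution has a real gap. The theta in question carries $\pi_{i,j}$ on the double facet (not $\pi_{i,i}$, and there is no separate ``$\pi_{j,k}$-type decoration''), with $m$ and $k$ dots on the two simple facets. Dot migration turns $\pi_{i,j}$ into $\pi_{i,j}(x,y)=\sum_{\alpha=0}^{i-j}x^{\,i-\alpha}y^{\,j+\alpha}$ on the simple facets, so the value is $\sum_{\alpha=0}^{i-j}\Theta(m+i-\alpha,\,k+j+\alpha)$, where $\Theta(a,b)$ is the theta with undecorated double facet; no Pieri and certainly no Lemma~\ref{lem1} (a three-variable identity) is needed.

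The step you are missing is the cancellation mechanism. By $(\ThetaGraph)$, the summand is $-1$ at the unique $\alpha_-$ with $m+i-\alpha_-=N-1$, is $+1$ at $\alpha_-+1$, and is $0$ elsewhere. These two nonzero summands sit at \emph{consecutive} values of $\alpha$ and therefore cancel whenever both lie in the range $[0,i-j]$. The total is nonzero only when one of them falls off an end of the range: $\alpha_-=i-j$ gives the $-1$ case (equivalently $m+j=N-1$ and $i+k=N-2$), and $\alpha_-=-1$ gives the $+1$ case ($i+m=N-2$ and $j+k=N-1$). Your assertion that the remaining cases vanish because ``no admissible $(a,b)$ equals $(N-1,N-2)$'' or by dot conversion is wrong in the generic situation: both hits occur and cancel. (A smaller point: your alternative route via $\Theta(\pi_{0,0,0},\pi_{\cdot,\cdot},\cdot)$ invokes the $(1,2,3)$-theta, which is a different foam.)
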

\begin{proof} By the dot conversion formulas, 
we get 
$$
\figins{-17}{0.55}{theta-mijkk}=\sum\limits_{\alpha =0}^{i-j}
\figins{-17}{0.55}{theta-miakja}.$$
By $(\ThetaGraph)$ we have
$$\figins{-17}{0.55}{theta-miakja}=
\begin{cases}
-1&\text{if}\quad m+i-(N-1)=\alpha=N-2-(k+j)\\
+1&\text{if}\quad m+i-(N-2)=\alpha=N-1-(k+j)\\
0&\text{else}.
\end{cases}
$$
We see that, in the sum above, the summands for two consecutive values of $\alpha$ will cancel 
unless one of them is zero and the other is not. We see that the total sum is equal to $-1$ 
if the first non-zero summand is at $\alpha=i-j$ and $+1$ if the last non-zero summand 
is at $\alpha=0$.   
\end{proof}

The following bubble-identities are an immediate consequence of Lemma~\ref{lem:theta-ijkl} and 
(CN$_1$) and (CN$_2$).

\begin{cor}
\label{cor:bubble-12}
\begin{equation}
\label{eq:bubble-ij}
\figins{-20}{0.6}{bubble-ij}=
\begin{cases}
\quad -\figins{-13}{0.35}{dplan-iij} & \text{if}\quad i>j\geq 0 \\
\figins{-13}{0.35}{dplan-jji} & \text{if}\quad j>i\geq 0 \\
\qquad 0 & \text{if}\quad i=j
\end{cases} 
\end{equation}

\begin{equation}
\label{eq:bubble-ikj}
\figins{-18}{0.6}{bubble-ikj}=
\begin{cases}
-   \figins{-8}{0.3}{plan-kk} & \text{if }\  i+j=N-1 \\ \\
   \figins{-8}{0.3}{plan-j} & \text{if }\ i+k=N-2   \\ \\
 \quad\     0 & \text{ else }
\end{cases}
\end{equation}
\end{cor}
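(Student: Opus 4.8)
The plan is the standard ``cut the neck and evaluate the resulting closed foams'' argument. Each bubble on the left-hand side of (\ref{eq:bubble-ij}) and (\ref{eq:bubble-ikj}) is a foam in which a small blister --- a digon-shaped region of two simple facets in the first case, a simple/double pair in the second --- is attached along a distinguished singular circle, the \emph{neck}, to an ambient facet that is otherwise plain. First I would apply the cutting-neck relation appropriate to the type of that neck: (CN$_1$) if the neck is a simple circle and (CN$_2$) if it is a double circle. This rewrites the bubble as a finite sum $\sum_\alpha \langle T_\alpha\rangle_{KL}\, D_\alpha$, where $D_\alpha$ runs over the dotted disks appearing on the right-hand side of (CN$_1$), resp.\ (CN$_2$) --- so over $x^l$ with $0\le l\le N-1$, resp.\ over $\pi_{a,b}$ with $0\le b\le a\le N-2$ --- and $T_\alpha$ is the closed foam obtained by capping the cut bubble with the dual disk.

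Each $T_\alpha$ is a closed theta foam. Using the dot-migration relations of Proposition~\ref{prop:principal rels1} I would push all dots onto the facets carrying the evaluation data, putting $T_\alpha$ into the normal form to which Lemma~\ref{lem:theta-ijkl} applies (together with the $(\ThetaGraph)$ relation and the dot-conversion relations). Lemma~\ref{lem:theta-ijkl} then gives $\langle T_\alpha\rangle_{KL}=-1$, resp.\ $+1$, for exactly one index $\alpha$ in each of its two families --- precisely when the dot counts meet the conditions $m+j=N-1=i+k+1$, resp.\ $j+k=N-1=i+m+1$ --- and $0$ otherwise. Solving those conditions for $\alpha$ in the range of summation leaves at most one surviving disk $D_\alpha$; substituting it produces the stated closed forms, with the case split of the corollary being exactly the translation of ``a solution $\alpha$ exists'': in (\ref{eq:bubble-ikj}) the two theta families become the conditions $i+j=N-1$ (giving $-\pi_k$ on the remaining simple facet) and $i+k=N-2$ (giving $+\pi_j$), and in (\ref{eq:bubble-ij}) they become the trichotomy $i>j$ / $j>i$ / $i=j$; every other labelling forces all $T_\alpha$ to vanish and the bubble is $0$.

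The main obstacle is bookkeeping rather than anything conceptual: choosing which singular circle is the neck, keeping track of the signs that come both from (CN$_1$)/(CN$_2$) and from the $\pm1$ in Lemma~\ref{lem:theta-ijkl}, and sliding dots across singular arcs so that each capped foam is literally a theta foam of the shape the lemma treats. Since no geometric or representation-theoretic input is needed beyond Lemma~\ref{lem:theta-ijkl} and the cutting-neck relations, the statement is indeed an immediate consequence, and I would write the proof by just displaying the cut, the induced sum, and the single nonzero term in each case.
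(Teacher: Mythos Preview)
Your proposal is correct and follows exactly the paper's route: the paper states that the corollary is ``an immediate consequence of Lemma~\ref{lem:theta-ijkl} and (CN$_1$) and (CN$_2$)'', i.e.\ cut the appropriate neck, evaluate the resulting $(1,1,2)$-theta foams with Lemma~\ref{lem:theta-ijkl}, and keep the unique surviving term. The dot-migration step you mention is not actually needed --- after cutting, the closed foams are already in the shape Lemma~\ref{lem:theta-ijkl} evaluates --- but this is harmless.
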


The following identities follow easily from (\emph{CN}$_1$), (\emph{CN}$_2$),  
Lemma~\ref{lem:theta-pqrkli}, Lemma~\ref{lem:theta-ijkl} and their corollaries.

\begin{cor}
\label{cor:tubes}

$$
\figins{-22}{0.6}{3cyls1} + \sum_{a+b+c=N-2} 
\figins{-22}{0.6}{3cyls2} =
\figins{-22}{0.6}{3cyls3}
\rlap{\hspace{11ex}\text{\emph{(3C)}}}
$$

\medskip


$$
\figins{-8}{0.3}{remdisk} \ =  
\figins{-8}{0.3}{cupdcap-h} \ - \ 
\figins{-8}{0.3}{cupcapd-h}
\rlap{\hspace{19.5ex}\text{(RD$_1$)}}
$$

\medskip

$$
\figins{-8}{0.3}{remdisk2} \ =  
\figins{-8}{0.3}{cup2-dcap-h} \ - \ 
\figins{-8}{0.3}{cup1-dcapb-h} \ + \ 
\figins{-8}{0.3}{cup-dcapw-h}
\rlap{\hspace{14ex}\text{(RD$_2$)}}
$$

\medskip


$$
\figins{-8}{0.3}{fatcyl} \ = - \ 
\figins{-8}{0.3}{cupcap-h}
\rlap{\hspace{18.5ex}\text{(FC)}}
$$

\end{cor}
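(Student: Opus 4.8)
The plan is to prove all four identities by the same \emph{neck-cutting} technique that is standard in this kind of cobordism category. In each case one locates a compressible annulus (a ``neck'') inside the foam on one side of the identity, cuts it open using the appropriate cutting-neck relation --- (CN$_1$) for a simple tube, (CN$_2$) for a double tube --- and thereby rewrites that side as a finite $\bQ$-linear combination of foams in which a small closed surface (a ``bubble'') has been split off and glued onto one of the remaining facets, each term carrying a prescribed collection of dots coming from the dual bases in (\ref{eq:db}). Evaluating those bubbles by the identities already at our disposal then collapses the sum to the claimed right-hand side.

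Concretely, for (FC) one cuts the single neck of the fat cylinder; the resulting bubbles are evaluated by the relevant line of Corollary~\ref{cor:bubble-12}, which is nonzero for exactly one pairing of decorations and equals $-1$ times a dotted cup-cap there, so the sum telescopes to minus a cup-cap. For (RD$_1$) and (RD$_2$) one cuts the collar of the singular circle bounding the distinguished disk; this time the bubble evaluation is the one in Corollary~\ref{cor:bubble-12} (resp.\ Corollary~\ref{cor:bubble-star}), which has two (resp.\ three) nonvanishing cases with signs $+,-$ (resp.\ $+,-,+$). Reinserting these reproduces precisely the two-term (resp.\ three-term) alternating combination on the right, after moving the surviving dots to the displayed facets by the dot-migration relations of Proposition~\ref{prop:principal rels1}.

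For (3C) the same idea applies, but there are now three parallel necks; cutting them and expanding the dual-basis sums yields a multiple sum of dotted cup-cap foams. The degree constraint on theta-foams, together with the vanishing statements of Lemma~\ref{lem:theta-pqrkli}, Lemma~\ref{lem:theta-ijkl} and Corollary~\ref{cor:theta-pqrkli}, kill all but a combinatorially explicit set of terms; among the survivors one batch reassembles into the lone foam on the left of (3C), one closed-off configuration gives the foam on the right, and what remains is exactly $\sum_{a+b+c=N-2}$ of the middle foam, so that rearranging the equation yields (3C).

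The routine part of all this is writing out the neck-cuttings and substituting the bubble and theta tables. The main obstacle is the sign bookkeeping: one must keep straight the $(-1)$'s built into the dual bases (\ref{eq:db}) and the $\lfloor i/2 \rfloor$ signs in the trace form, together with the super-contraction signs of the Kapustin--Li formula, and --- in (3C) --- check that the ``degenerate'' index values (the cases $j=i-1$ or $k=i$ in Corollary~\ref{cor:theta-pqrkli}) are precisely the ones that drop out, so that the remaining sum is the clean one over $a+b+c=N-2$. Once these are under control, each of (3C), (RD$_1$), (RD$_2$) and (FC) is a finite verification.
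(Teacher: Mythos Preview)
Your proposal is correct and matches the paper's approach: the paper does not spell out a proof at all but simply records that the identities ``follow easily from (CN$_1$), (CN$_2$), Lemma~\ref{lem:theta-pqrkli}, Lemma~\ref{lem:theta-ijkl} and their corollaries,'' i.e.\ exactly the neck-cutting plus bubble/theta-evaluation strategy you outline. Your write-up is in fact more detailed than the paper's, which treats this as a routine verification once the bubble identities of Corollaries~\ref{cor:bubble-star} and~\ref{cor:bubble-12} are in hand.
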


\bigskip

Note that by the results above we are able to compute 
$\langle u\rangle_{KL}$ combinatorially, for any 
closed foam $u$ whose singular graphs has no vertices, 
simply by using the cutting neck relations near all singular 
circles and evaluating the resulting spheres and theta 
foams. If the singular graph of $u$ has vertices, then we 
do not know if our relations are sufficient to evaluate $u$. 
We conjecture that they are sufficient, and that therefore 
our theory is strictly combinatorial, but we do not have 
a complete proof.  

\begin{prop}
\label{prop:principal rels2}
The following identities hold in $\foam$: 

\medskip

(The {\em digon removal} relations)
$$
\figins{-20}{0.6}{digonfid}=
\figins{-20}{0.6}{dig_rem1}-
\figins{-20}{0.6}{dig_rem2}
\rlap{\hspace{16ex}\text{(DR$_1$)}}
$$

\medskip

$$
\figins{-27}{0.7}{digonfid_b} = \sum_{a+b+c=N-2}\
\figins{-27}{0.7}{dig_rem_b} \ = \sum_{i=0}^{N-2}\
\figins{-27}{0.7}{dig_rem_b-i}
\rlap{\hspace{7ex}\text{(DR$_2$)}}
$$

\medskip

$$
\figins{-30}{0.7}{digonfid_b-star1} =\
-\figins{-30}{0.7}{dig_rem_b1-star}+
\figins{-30}{0.7}{dig_rem_b2-star}-
\figins{-30}{0.7}{dig_rem_b3-star}
\rlap{\hspace{7.5ex}\text{(DR$_{3_1}$)}}
$$

\medskip

$$
\figins{-30}{0.7}{digonfid_b-star2} = - \sum_{0\leq j\leq i\leq N-3}\
\figins{-30}{0.7}{dig_rem-32}
\rlap{\hspace{13ex}\text{(DR$_{3_2}$)}}
$$

\medskip

$$
\figins{-20}{0.6}{digonfid-star}=
- \sum_{i=0}^{N-3} 
\figins{-20}{0.6}{dig_rem33}
\rlap{\hspace{18.2ex}\text{(DR$_{3_3}$)}}
$$

\bigskip

(The {\em first square removal} relation)

$$
\figins{-31}{0.9}{sqface_id}= 
-\;\figins{-31}{0.9}{sq_rem2}\quad + \sum_{a+b+c+d=N-3}\
\figins{-31}{0.9}{sq_rem1}
\rlap{\hspace{4ex}\text{(SqR$_1$)}}
$$

\end{prop}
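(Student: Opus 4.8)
The plan is to prove every identity in the proposition by the same reduction that was used for the relations of Proposition~\ref{prop:principal rels1}: rewrite both sides as $\bQ$-linear combinations of evaluations of dotted spheres and theta-foams, whose values are already recorded. By Definition~\ref{defn:foam} it is enough to verify, for each claimed identity $\sum_i c_i f_i = 0$ with $f_i\colon \Gamma\to\Gamma'$, that $\sum_i c_i\langle g'f_ig\rangle_{KL}=0$ for $g\colon\emptyset\to\Gamma$ and $g'\colon\Gamma'\to\emptyset$ ranging over a spanning set. One can take the $g,g'$ to be built from identity foams by attaching dotted spheres and handles, so that every singular circle of the closed foam $g'f_ig$ is unknotted and unlinked and no singular vertices occur. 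Then applying the cutting-neck relations (CN$_1$), (CN$_2$) and (CN$_*$) around each singular circle turns each $\langle g'f_ig\rangle_{KL}$ into a finite $\bQ$-combination of products of sphere and theta evaluations, supplied by (S$_1$)--(S$_*$), by $(\ThetaGraph)$, $(\ThetaGraph_*)$ and by Lemmas~\ref{lem:theta-pqrkli} and~\ref{lem:theta-ijkl}. In this way each identity becomes a finite combinatorial statement about sums of products of Schur polynomials in the rings $H^*(\G_{k,N})$, to be checked with Pieri's rule (\ref{mnoz2}), with Lemma~\ref{lem1}, and with the dual-basis formula (\ref{eq:db}).

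It is convenient to package these computations through the natural ``zip/unzip'' and ``singular saddle'' foams. For (DR$_1$) I would let $\Gamma$ be the web with a simple digon and $\Gamma'$ the web obtained by the MOY digon relation in Figure~\ref{fig:moy}, and write $\alpha\colon\Gamma'\to\Gamma$ and $\beta\colon\Gamma\to\Gamma'$ for the foams creating and collapsing the digon. The composite $\beta\,(\text{dotted})\,\alpha$ is a bubble on a double facet, so by (\ref{eq:bubble-ij}) its evaluations with the various admissible dottings are $\pm$ a dotted identity of $\Gamma'$ or $0$; this pins down the two summands on the right of (DR$_1$) as coming from orthogonal idempotents. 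For the reverse identity I would cut the neck of $\mathrm{id}_\Gamma$ along one of the two simple circles of the digon with (CN$_1$): the resulting $N$ cup--cap terms each carry a bubble on the remaining simple facet, and (\ref{eq:bubble-ij}) annihilates all but two of them for degree reasons, leaving precisely (DR$_1$). The relations (DR$_2$), (DR$_{3_1}$), (DR$_{3_2}$), (DR$_{3_3}$) are handled identically, now for digons involving double and triple facets; the bubble evaluations needed are those of Corollaries~\ref{cor:bubble-12} and~\ref{cor:bubble-star}, and the shape of the index sets ($a+b+c$, or $0\le j\le i\le N-3$) is forced by the relation $\sqsubset$ of Lemma~\ref{lem1} governing how a triple-facet dot decomposes over a double facet.

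For the first square removal relation (SqR$_1$) the plan is the same, with $\Gamma$ the square web and $\Gamma_{lr}$, $\Gamma_{ud}$ the two webs of the MOY square relation in Figure~\ref{fig:moy}; the two families on the right are the compositions of the singular-saddle foams $\Gamma\leftrightarrow\Gamma_{lr}$ and $\Gamma\leftrightarrow\Gamma_{ud}$ with dots on their facets. Closing up these saddles against each other produces theta-foams with one simple, one double and one triple facet carrying dots, whose values come from Lemma~\ref{lem:theta-pqrkli} and Corollary~\ref{cor:theta-pqrkli}, while the tube identities (3C), (RD$_1$), (RD$_2$) and (FC) of Corollary~\ref{cor:tubes} are used to bring the closed foams into standard form. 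One then checks that the coefficient $-1$ of the first term and the sum over $a+b+c+d=N-3$ in the second come out exactly, which is a bookkeeping exercise in products of two- and three-variable Schur polynomials built on Lemma~\ref{lem1}.

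The hard part will be precisely this last bookkeeping, together with the sign tracking: every dot decoration must be kept in degree $0$, and the signs produced by the super-contraction in the Kapustin-Li formula and by the duality (\ref{eq:db}) must be propagated correctly through the finitely many local configurations; (SqR$_1$) is the most laborious because it involves three types of facets at once and the full strength of Lemma~\ref{lem1}. A secondary point to address is that the ``reverse'' triangle identities use implicitly that the Hom-spaces between the webs in question have the expected graded dimension; this is ultimately guaranteed by the fact (proved later, in Section~\ref{sec:taut-functor}) that the graded rank of $\mathrm{Hom}_\foam(\emptyset,\Gamma)$ equals $P_N(\Gamma)$, but for this proposition it suffices to verify it by hand in each local configuration using the cutting-neck reduction described above.
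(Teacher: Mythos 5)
The overall strategy --- reduce each identity to pairings against closed foams with no singular vertices, so that the Kapustin--Li evaluation is computable via (CN), spheres and thetas --- is the right one and is also the paper's. But two ingredients you treat lightly are exactly where the paper does its real work, and as stated your argument has gaps at those points.

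First, ``checking against a spanning set of $g, g'$'' presupposes that you know a spanning set. The paper obtains one by bending every foam in the proposition into a morphism $\emptyset\to\Gamma$ for a closed web $\Gamma$, noting that $\langle\ \vert_{KL}\colon \foam(\emptyset,\Gamma)\to H^*(\Gamma)$ is injective, and then using that $\dim H^*(\Gamma)$ is known from Khovanov--Rozansky's matrix-factorization computations. With that external dimension count in hand one exhibits explicit $u_i, u_i^*\in\foam(\emptyset,\Gamma)$, all with vertex-free singular graph (hence KL-computable by (CN), spheres, thetas), with $\langle u_i\widehat{u_j^*}\rangle_{KL}=\delta_{i,j}$; since the number of such pairs equals $\dim H^*(\Gamma)$, the $u_i$ form a basis and it suffices to check the proposed identities against the $\widehat{u_i}$. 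Your plan instead invokes ``the expected graded dimension'' and defers to Section~\ref{sec:taut-functor}, but the results there rest on this very proposition (through Lemma~\ref{lem:moy-F}), so the deferral is circular; and ``verify it by hand'' is precisely the construction of mutually dual bases, which you do not actually carry out. The KR dimension input is essential and cannot be replaced by the cutting-neck calculus alone without a separate argument.

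Second, the direct rewriting of (DR$_1$) by cutting a neck on $\mathrm{id}_\Gamma$ does not quite parse. In the identity foam over the digon web the simple facets are disks (each simple edge of the digon crossed with an interval), not tubes, so (CN$_1$) does not apply to them as they stand; there is no ``simple circle'' facet to cut. This is again resolved by the paper's move of first bending $\mathrm{id}_\Gamma$ into $\foam(\emptyset, \Gamma)$ for the appropriate closed web $\Gamma$, after which the relevant facet becomes a closed surface and (CN) applies. That bending step is structural, not cosmetic. The remainder of your toolkit --- the bubble identities, Pieri's rule, Lemma~\ref{lem1}, Corollary~\ref{cor:tubes}, sign tracking via (\ref{eq:db}) --- is the right one, and once the two points above are repaired the rest of the plan does go through along the paper's lines.
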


\begin{proof}
We first explain the idea of the proof. Naively one could try to consider 
all closures of the foams in a relation and compare their KL-evaluations. However, in practice 
we are not able to compute the KL-evaluations of arbitrary closed foams 
with singular vertices. Therefore we use a different strategy. We consider any foam in the 
proposition as a foam from $\emptyset$ to its boundary, rather than as a foam between one part 
of its boundary to another part. If $u$ is such a foam whose 
boundary is a closed web $\Gamma$, then by the properties explained in Section~\ref{sec:KL} 
the KL-formula associates to $u$ an element in $H^*(\Gamma)$, which is the homology of the 
complex associated to $\Gamma$ in~\cite{KR}. By Definition~\ref{defn:foam} and by the 
glueing properties of the KL-formula, as explained in Section~\ref{sec:KL}, the induced 
linear map from $\langle\ \vert_{KL}\colon \foam(\emptyset,\Gamma)\to H^*(\Gamma)$ is injective. 
The KL-formula also defines an inner product on $\foam(\emptyset,\Gamma)$ by 
$$(u,v)\mapsto \langle u\hat{v}\rangle_{KL}.$$
By $\hat{v}$ we mean the foam in $\foam(\Gamma,\emptyset)$ obtained by rotating 
$v$ along the axis which corresponds to the $y$-axis (i.e. the horizontal axis parallel to the 
computer screen) in the original picture in this 
proposition. By the 
results in~\cite{KR} we know 
the dimension of $H^*(\Gamma)$. Suppose it is equal to $m$ 
and that we can find two sets of elements $u_i$ and $u_i^*$ in $\foam(\emptyset,\Gamma)$, 
$i=1,2,\ldots,m$, such that 
$$\langle u_i\widehat{u_j^*}\rangle_{KL}=\delta_{i,j},$$ 
where $\delta_{i,j}$ is the Kronecker delta. 
Then $\{u_i\}$ and $\{u_i^*\}$ are mutually dual bases of $\foam(\emptyset,\Gamma)$ and 
$\langle\ \vert_{KL}$ is an isomorphism. Therefore, two elements 
$f,g\in\foam(\emptyset,\Gamma)$ are equal if and only if 
$$\langle f\widehat{u_i}\rangle_{KL}=
\langle g\widehat{u_i}\rangle_{KL},$$ 
for all $i=1,2,\ldots,m$ (alternatively one can use the $u_i^*$ 
of course). In practice this only helps if the l.h.s. and the r.h.s. of 
these $m$ equations can be computed, e.g. if $f,g$ and the $u_i$ are all foams with singular graphs without vertices. 
Fortunately that is the case for all the relations in this proposition.     
 
Let us now prove (DR$_1$) in detail. Note that the boundary of any of the 
foams in (DR$_1$), 
denoted $\Gamma$, is homeomorphic to the web 
$$\figins{-17}{0.5}{digons1web}.$$ 
Recall that the dimension of 
$H^*(\Gamma)$ is equal to $2N(N-1)$ (see~\cite{KR}). For $0\leq i,j\leq 1$ 
and $0\leq m\leq k\leq N-2$, let $u_{i,j;(k,m)}$ 
denote the following foam 
$$\figins{-23}{0.8}{empty2d1-ijkm}.$$
Let 
$$u_{i,j;(k,m)}^*=u_{1-j,1-i;(N-2-m,N-2-k)}.$$ 
From Equation (\ref{eq:bubble-ij}) and the sphere relation ($S_2$) 
it easily follows that $\langle u_{i,j;(k,m)}\widehat{u_{r,s;(t,v)}^*}
\rangle_{KL}=
\delta_{i,r}\delta_{j,s}\delta_{k,t}\delta_{m,v}$, where $\delta$ denotes the 
Kronecker delta. Note that there are exactly $2N(N-1)$ quadruples 
$(i,j;(k,l))$ 
which satisfy the conditions. Therefore the $u_{i,j;(k,m)}$ define a basis of 
$H^*(\Gamma)$ and 
the $u_{i,j;(k,m)}^*$ define its dual basis. In order to prove (DR$_1$) all we 
need to do next is check that 
$$\langle(\mbox{l.h.s. of (DR$_1$)}) \widehat{u_{i,j;(k,m)}}
\rangle_{KL}=\langle(\mbox{r.h.s. of (DR$_1$)}) \widehat{u_{i,j;(k,m)}}
\rangle_{KL},$$ for all $i,j$ and $(k,m)$. This again follows easily from 
equation (\ref{eq:bubble-ij}) and the sphere relation (S$_2$).     

The other digon removal relations are proved in the same way. We do not repeat the  
whole argument again for each digon removal relation, 
but will only give the relevant mutually dual bases. 
For (DR$_2$), note that $\Gamma$ is equal to 
$$\figins{-17}{0.5}{digons2web}.$$ 
Let $u_{i,k,m}$ denote the foam
$$\figins{-23}{0.8}{empty2d2-imk}$$
for 
$0\leq i,m\leq N-2$ and $0\leq k\leq N-1$. The dual basis is defined by 
$$u_{i,k,m}^*=-u_{N-2-m,N-1-k,N-2-i},$$ 
for the same range of indices. Note that 
there are $N(N-1)^2$ possible indices, which corresponds exactly to the dimension of 
$H^*(\Gamma)$.

For (DR$_{3_1}$), the web $\Gamma$ is equal to 
$$\figins{-17}{0.5}{digons3web}.$$
let $u_{i;(k,m);(p,q,r)}$ denote the foam 
$$\figins{-23}{0.8}{empty2d3-star-ipqrkm}$$
for 
$0\leq i\leq 2$, $0\leq m\leq k\leq 1$ and $0\leq r\leq q\leq p\leq N-3$. The dual 
basis is given by 
$$u_{i;(k,m);(p,q,r)}^*=u_{2-(k+m);(1-\lfloor \frac{i}{2}\rfloor,
1-\lceil \frac{i}{2}\rceil);(N-3-r,N-3-q,N-3-p)},$$ 
for $0\leq t\leq s\leq 1$, $0\leq m\leq k\leq 1$ and $0\leq r\leq q\leq p\leq N-3$. 
Note that there are $3^2\binom{N}{3}$ possible indices, which corresponds exactly to 
the dimension of $H^*(\Gamma)$. 

For (DR$_{3_2}$), take $\Gamma$ to be 
$$\figins{-17}{0.5}{digons2-starweb}.$$
Let 
$u_{i;(k,m);(s,t)}$ denote the foam 
$$\figins{-23}{0.8}{empty2d2-star-ikmst}$$
for 
$0\leq i\leq N-1$, $0\leq m\leq k\leq N-3$ and $0\leq t\leq s\leq N-3$. The 
dual basis is given by  
$$u_{i;(k,m);(s,t)}^*=u_{N-1-i;(N-3-t,N-3-s);(N-3-m,N-3-k)},$$
for the same range of indices. Note that there are $N\binom{N-1}{2}^2$ indices, which 
corresponds exactly to the dimension of $H^*(\Gamma)$.

For (DR$_{3_3}$), take $\Gamma$ to be 
$$\figins{-17}{0.5}{digons1-starweb}.$$ 
Let $u_{i,j;(k,m)}$ denote the foam 
$$\figins{-23}{0.8}{empty2d1-star-ikmj00}$$
for 
$0\leq i,j\leq N-3$ and $0\leq m\leq k\leq N-2$. Define 
$$u_{i,j;(k,m)}^*=u_{N-3-j,N-3-i;(N-2-m,N-2-k)},$$ 
for the same range of indices. Note that there are $(N-2)^2\binom{N}{2}$ indices, which 
corresponds exactly to the dimension of $H^*(\Gamma)$.

For (SqR$_1$), the relevant web $\Gamma$ is equal to 
$$\figins{-20}{0.6}{sq1web}\ .$$ 
By the results in 
\cite{KR} we know that the dimension of $H^*(\Gamma)$ is equal to $N^2+2N(N-2)+N^2(N-2)^2$. 
The proof of this relation is very similar, except that it is slightly harder to 
find the mutually dual bases in $H^*(\Gamma)$. The problem is that the 
two terms on the right-hand side of (SqR$_1$) are topologically distinct. Therefore 
we get four different types of basis elements, which are glueings of the upper or lower 
half of the first term and the upper or lower half of the second term. For 
$0\leq i,j\leq N-1$, let $u_{i,j}$ 
denote the foam 
$$\figins{0}{0.8}{empty2sq1}$$
with the top simple facet labelled by $i$ and the bottom one by $j$.
Take 
$$u_{i,j}^*=u_{N-1-j,N-1-i}.$$ 
Note that 
$$\langle u_{i,j}\widehat{u_{k,m}^*}\rangle_{KL}=
\delta_{i,k}\delta_{j,m}$$ by the (FC) relation in 
Corollary~\ref{cor:tubes} and the Sphere 
Relation (S$_1$). 

For $0\leq i\leq N-1$ and $0\leq k\leq N-3$, let $v'_{i,k}$ denote the foam 
$$\figins{0}{0.8}{empty2sq3}$$
with the simple square on the r.h.s. labelled by $i$ and the other simple facet by $k$. Note 
that the latter is only one facet indeed, because it has a saddle-point in the middle where the dotted lines meet.  
For the same range of indices, we define $w'_{i,k}$ by 
$$\figins{0}{0.8}{empty2sq4}$$
with the simple square on the l.h.s. labelled by $i$ and the other simple facet by $k$.
The basis elements are now defined by 
$$v_{i,k}=\sum_{a+b+c=N-3-k}v'_{c,a+b+i}$$
and   
$$w_{i,j}=\sum_{a+b+c=N-3-j}w'_{c,a+b+i}.$$
The respective duals are defined by  
$$v_{i,k}^*=w'_{k,N-1-i}\quad\mbox{and}\quad w_{i,k}^*=v'_{k,N-1-i}.$$ 

We show that 
$$\langle v_{i,j}\widehat{v_{k,m}^*}\rangle_{KL}=\delta_{i,k}\delta_{j,m}=
\langle v_{i,j}\widehat{v_{k,m}^*}\rangle_{KL}$$
holds. First apply the 
(FC) relation of Corollary~\ref{cor:tubes}. 
Then apply (RD$_1$) of the same corollary twice and finally use the sphere relation (S$_1$).  

For $0\leq i,j\leq N-1$ and $0\leq k,m\leq N-3$, let $s'_{i,j,k,m}$ denote the foam 
$$\figins{0}{0.8}{empty2sq2}$$
with the simple squares labelled by $k$ and $m$, from left to right respectively, and 
the other two simple facets by $i$ and $j$, from front to back respectively. 
The basis elements are defined by 
$$s_{i,j,k,m}=\sum_{{a+b+c=N-3-k}\atop {d+e+f=N-3-m}}
s'_{c,f,i+a+d,j+b+e}.$$ For the same range of indices, the dual elements of this shape 
are given by 
$$s_{i,j,k,m}^*=s'_{m,k,N-1-i,N-1-j}.$$

From (RD$_1$) of Corollary~\ref{cor:tubes}, applied twice, and the sphere relation (S$_1$) 
it follows that  
$$\langle s_{a,b,c,d}\widehat{s_{i,j,k,m}^*}\rangle_{KL}=\delta_{a,i}\delta_{b,j}
\delta_{c,k}\delta_{d,m}$$ holds. 

It is also easy to see that the inner product $\langle\;\rangle_{KL}$ of a basis element and a dual 
basis element of distinct shapes, i.e. indicated by different letters above, gives zero. 
For example, consider 
$$\langle u_{i,j} \widehat{v_{k,m}^*}\rangle_{KL}$$ for any valid choice 
of $i,j,k,m$. At the place where the two different shapes are glued, 
$$u_{i,j}\widehat{v_{k,m}^*}$$ 
contains a simple saddle with a simple-double bubble. 
By Equation (\ref{eq:bubble-ikj}) that bubble kills 
$$\langle u_{i,j} \widehat{v_{k,m}^*}\rangle_{KL},$$ 
because $m\leq N-3$. The same argument holds for 
the other cases. This shows that $\{u,v,w,s\}$ and $\{u^*,v^*,w^*,s^*\}$ 
form dual bases of $H^*(\Gamma)$, because the number of possible indices 
equals $N^2+2N(N-2)+N^2(N-2)^2$.

In order to prove (SqR$_1$) one now has to compute the inner product of the 
l.h.s. and the r.h.s. with any basis element of $H^*(\Gamma)$ and show that they are equal. We 
leave this to the reader, since the arguments one has to use are the same as we used above. 
\end{proof}

\begin{cor}{(The \emph{second square removal} relation)}
$$
\figins{-34}{0.8}{sqface_b_id}=
-\;\figins{-34}{0.8}{sq_rem_b1}-
\figins{-34}{0.8}{sq_rem_b2}
\rlap{\hspace{6ex}\text{(SqR$_2$)}}
$$
\end{cor}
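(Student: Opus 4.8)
The statement (SqR$_2$) is labelled a corollary, so the plan is to deduce it from the first square removal relation (SqR$_1$) together with the digon, bubble, sphere and $\Theta$-foam relations of Propositions~\ref{prop:principal rels1} and~\ref{prop:principal rels2} and Corollaries~\ref{cor:bubble-star}, \ref{cor:bubble-12} and~\ref{cor:tubes}, rather than to rerun the dual-basis argument used in the proof of Proposition~\ref{prop:principal rels2}. The underlying observation is that the closed web $\Gamma$ bounding the three foams in (SqR$_2$) is related to the web bounding the foams in (SqR$_1$) by a single MOY digon move, and that this move is realised on the level of foams by one of the elementary digon-creating foams that appear in (DR$_1$) and (DR$_2$). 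Concretely, I would apply the appropriate digon relation to a sub-foam of the left-hand side of (SqR$_2$), thereby producing a linear combination of foams each containing the left-hand side of (SqR$_1$) as a sub-foam, into which I then substitute the right-hand side of (SqR$_1$).

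Carrying this out term by term, each resulting foam contains a closed local piece, which is a sphere, a theta-foam, or a simple (resp.\ double) bubble sitting on one of the facets. I would evaluate these using (S$_1$) and (S$_2$), the $(\ThetaGraph)$ and $(\ThetaGraph_*)$ relations, and Corollary~\ref{cor:bubble-12} (or Corollary~\ref{cor:bubble-star} when a triple facet is involved). The crucial point is that such a bubble or sphere evaluation either annihilates its term or projects the dots carried by the glued disc onto one specific value; combined with the dot-conversion relations (a simple facet with $\geq N$ dots, a double facet with $\pi_{i,j}$ for $i\geq N-1$, and a triple facet with $\pi_{p,q,r}$ for $p\geq N-2$ all vanish) and with dot migration, this forces the sum $\sum_{a+b+c+d=N-3}$ in (SqR$_1$) to collapse to its single surviving summand. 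After using Lemma~\ref{lem1} and dot migration to rewrite the remaining decorated facets in standard form, and tracking the signs coming from the super-contraction, one is left with exactly the two-term right-hand side of (SqR$_2$).

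I expect the bookkeeping to be the only genuine obstacle, on three fronts: (i) as in the (SqR$_1$) argument the two terms on the right are topologically distinct, so one must keep careful track of which shape of closed piece appears in each term and on which sheet; (ii) pinning down precisely which summand of $\sum_{a+b+c+d=N-3}$ survives needs the exact bubble formulas of Corollaries~\ref{cor:bubble-12} and~\ref{cor:bubble-star}, not merely their vanishing, together with the index ranges stated there; and (iii) the signs. Should the gluing be awkward to organise cleanly, a safe fallback is the method of Proposition~\ref{prop:principal rels2}: view each foam in (SqR$_2$) as an element of $\foam(\emptyset,\Gamma)$, read off $\dim H^*(\Gamma)$ from~\cite{KR}, exhibit mutually dual bases of $\foam(\emptyset,\Gamma)$ built from bubbled caps exactly as for (SqR$_1$) (again one gets several distinct shapes, glued from the upper and lower halves of the two right-hand terms), and verify (SqR$_2$) by pairing both sides with each basis element using only (FC), (RD$_1$), (S$_1$) and the bubble relations, all of whose inputs are foams whose singular graphs have no vertices.
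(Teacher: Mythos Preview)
Your overall strategy---deduce (SqR$_2$) from (SqR$_1$) together with digon, bubble and (MP) relations, rather than rerun the dual-basis argument---is exactly what the paper does. The specific entry point, however, is different from yours, and this matters.

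The paper does \emph{not} start from the left-hand side of (SqR$_2$) and apply a digon relation to expose an (SqR$_1$)-configuration. Instead it starts from the \emph{second term on the right-hand side} of (SqR$_2$) (the one with the triple facet) and observes that this foam contains, perpendicular to the triple facet, a simple--double square tube to which (SqR$_1$) applies directly. Substituting the right-hand side of (SqR$_1$) into that tube then produces two pieces: the first term of (SqR$_1$) yields, after (DR$_{3_2}$), (MP) and the triple-facet bubble relation~(\ref{eq:bubble-pqri}), minus the first r.h.s.\ term of (SqR$_2$); the summation term of (SqR$_1$) yields, after (DR$_{3_3}$), (MP) and again~(\ref{eq:bubble-pqri}), the l.h.s.\ of (SqR$_2$). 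Rearranging gives (SqR$_2$). The signs are accounted for by the two bubbles appearing with opposite orientations.

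Two points to take away. First, because (SqR$_2$) involves a triple facet, the auxiliary relations that actually do the work are the \emph{starred} digon removals (DR$_{3_2}$), (DR$_{3_3}$) and the bubble identity~(\ref{eq:bubble-pqri}) from Corollary~\ref{cor:bubble-star}, together with (MP); the relations (DR$_1$), (DR$_2$), (S$_1$), (S$_2$), $(\ThetaGraph)$ and Corollary~\ref{cor:bubble-12} that you emphasise do not touch triple facets and are not the ones used. Second, your proposed order of operations---first open a digon on the l.h.s., then substitute (SqR$_1$)---is not obviously workable: the l.h.s.\ of (SqR$_2$) does not visibly contain the l.h.s.\ of (SqR$_1$) after a single simple/double digon move, precisely because the triple facet is in the way. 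The paper's choice to apply (SqR$_1$) inside the second r.h.s.\ term avoids this, since that term already carries the triple facet as a separate sheet with a genuine simple--double square tube running alongside it. Your fallback dual-basis argument would of course also succeed, but the paper's one-line reduction is considerably shorter.
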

\begin{proof} 
Apply the Relation (SqR$_1$) to the simple-double square tube perpendicular to 
the triple facet of the second term on the r.h.s. of (SqR$_2$). The first term 
on the r.h.s. of (SqR$_1$) yields minus the first term on the r.h.s. of 
(SqR$_2$) after applying 
the relations (DR$_{3_2}$), (MP) and the Bubble Relation (\ref{eq:bubble-pqri}).  The second term 
on the r.h.s. of (SqR$_1$), i.e. the whole sum, yields the l.h.s. of (SqR$_2$) after applying 
the relations (DR$_{3_3}$), (MP) and the Bubble Relation (\ref{eq:bubble-pqri}). Note that 
the signs come out right because in both cases we get two bubbles with opposite orientations.  
\end{proof}

\section{Invariance under the Reidemeister moves}
\label{sec:invariance}

Let $\mathbf{Kom}(\foam)$ and $\mathbf{Kom}_{/h}(\foam)$ denote the category of complexes in $\foam$ 
and the same category modulo homotopies respectively.
As in~\cite{bar-natancob} and~\cite{mackaay-vaz} we can take all different flattenings of $D$ to obtain 
an object in $\mathbf{Kom}(\foam)$ which we call $\brak{D}$. The construction is well known by now 
and is indicated in Figure~\ref{fig:geom-cplx}.
\begin{figure}[h]
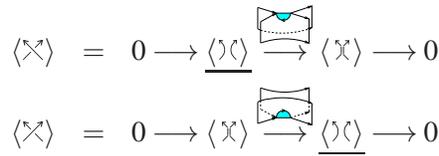

\begin{eqnarray*}
\brak{\undercrossing} &=& 0 \lra \underline{\brak{\orsmoothing}} 
     \stackrel{\figins{0}{0.2}{ssaddle}}{\longrightarrow} \brak{\DoubleEdge} \lra 0 \\
\brak{\overcrossing} &=& 0 \lra \brak{\DoubleEdge}   
     \stackrel{\figins{0}{0.2}{ssaddle_ud}}{\longrightarrow} \underline{\brak{\orsmoothing}} \lra 0  
\end{eqnarray*}
\caption{Complex associated to a crossing. Underlined terms correspond to homological degree zero}
\label{fig:geom-cplx}
\end{figure}


\begin{thm}\label{thm:inv}
The bracket $\brak{\;}$ is invariant in $\mathbf{Kom}_{/h}(\foam)$ under the Reidemeister moves.
\end{thm}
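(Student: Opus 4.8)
The plan is to follow the strategy used for the analogous cobordism categories in \cite{bar-natancob,khovanovsl3,mackaay-vaz}: for each Reidemeister move one writes the cube of resolutions as an explicit complex in $\foam$ and simplifies it up to homotopy. Two ingredients do all the work. The first is the by-now-standard Gaussian elimination lemma: if a complex contains a two-term subcomplex whose differential is an isomorphism, that subcomplex may be cancelled without changing the homotopy type. The second is the list of ``principal'' relations of Propositions~\ref{prop:principal rels1} and~\ref{prop:principal rels2}; in particular the digon removal relations (DR$_1$)--(DR$_{3_3}$) and the square removal relations (SqR$_1$), (SqR$_2$) play the role that Bar-Natan's single digon relation plays in the $\mathfrak{sl}(2)$ case, each of them being a direct-sum decomposition of a web containing a digon or a square into simpler webs, with explicit inclusion and projection foams. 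Everything below is then a finite diagram chase inside $\foam$ using only these relations and their corollaries (Corollaries~\ref{cor:bubble-12}, \ref{cor:bubble-star}, \ref{cor:tubes}).

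\emph{Reidemeister I.} The complex $\brak{\;}$ of a curl has two terms; one of the two flattenings is a web consisting of the straightened strand together with a digon, the other is the straightened strand itself. Applying the appropriate digon removal relation rewrites the digon term as a direct sum indexed by dots on the simple edge, and the saddle map of Figure~\ref{fig:geom-cplx} matches exactly one summand isomorphically (the computation reduces to a sphere or theta evaluation, i.e. to (S$_1$), (S$_2$) or ($\ThetaGraph$)). Gaussian elimination removes that summand, after which the complex is concentrated in a single homological degree and, up to the homological and $q$-shifts that are absorbed by the normalization $(-1)^{n_-}q^{(N-1)n_+-Nn_-}$ in the definition of the invariant, equals $\brak{\text{strand}}$. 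The two kinds of curl are handled symmetrically; the dot conversion relations ensure that all ``extra'' summands vanish.

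\emph{Reidemeister II.} For each orientation pattern, $\brak{\;}$ is a four-term total complex over the square of resolutions, two of whose vertices are webs containing a digon (respectively, for the mixed pattern, a closed simple--double square). Using the relevant digon removal relation, or (SqR$_1$)/(SqR$_2$) in the square case, together with two successive Gaussian eliminations collapses the square to the complex of the identity tangle. The only thing to check is that the surviving maps intertwine the differentials, which follows from the dot migration and bubble relations.

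\emph{Reidemeister III.} This is the heart of the argument and the reason triple facets and the second type of singular vertex were introduced. Rather than comparing the two eight-vertex cubes directly, one resolves a suitably chosen crossing on each side, so that each side of R3 becomes a mapping cone; using the already-established invariance under R2 together with the relations (MP) and (SqR$_1$) one identifies the source and the target complexes of these two cones, and checks that the identifications intertwine the connecting maps, so that the two cones -- hence the two sides of R3 -- are homotopy equivalent. Concretely one uses (SqR$_1$) to split off the square webs appearing in the resolutions, (MP) to move singular vertices past one another, and Gaussian elimination to discard acyclic summands, all while tracking signs, which (exactly as in the proof of (SqR$_2$)) come out correctly because the relevant foams contain pairs of oppositely oriented bubbles. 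The genuinely hard point is not any individual relation but the bookkeeping: one must assemble the local homotopy equivalences so that they commute with the cube differentials on both sides, not merely match objects. Once that is arranged the verification is, as in \cite{khovanovsl3,mackaay-vaz}, mechanical.
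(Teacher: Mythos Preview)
Your overall strategy---Gaussian elimination and a mapping-cone reduction of RIII to RII---is a legitimate alternative to the paper's route, which instead writes down explicit chain maps $f,g$ and homotopies $h$ for each move and verifies the identities $gf=1$, $fg+dh+hd=1$ directly from the relations listed in Propositions~\ref{prop:principal rels1}--\ref{prop:principal rels2}. That said, two steps in your write-up are not right as stated.

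\medskip

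\textbf{Reidemeister I.} Your description of the two-term complex is incorrect: neither resolution of a curl is ``the straightened strand itself''. The oriented smoothing is the strand together with a disjoint simple circle, and the double-edge resolution is the strand carrying a simple--double digon (the web $\figins{-4}{0.2}{dbedge-dig-l}$). Both terms must be decomposed---the circle via (CN$_1$) into $N$ shifted copies of the strand, the digon via (DR$_2$) into $N-1$ copies---after which the differential is an $(N{-}1)\times N$ matrix and one needs $N-1$ cancellations, not one. A single Gaussian elimination as you describe leaves the wrong object. The paper avoids this bookkeeping by writing the homotopy $h$ (a sum over $a+b+c=N-2$) and the maps $f^0,g^0$ explicitly and checking the four identities with (S$_1$), (DR$_2$), (RD$_1$) and (CN$_1$).

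\medskip

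\textbf{Reidemeister III.} Here the gap is more serious. You acknowledge that this move is the reason triple facets and singular vertices were introduced, but your argument then invokes only (MP) and (SqR$_1$); the relation that actually encodes the fourth MOY identity is (SqR$_2$), whose very statement involves a triple facet, and it does not appear in your RIII discussion. The paper does \emph{not} reduce RIII to RII via cones. It constructs an explicit auxiliary complex $\brak{Q}$ whose degree-zero term is a web containing a \emph{marked} (triple) edge and whose degree-zero differential is a foam with a singular vertex; it exhibits a homotopy equivalence $\brak{D}\simeq\brak{Q}$, obtains $\brak{D'}\simeq\brak{Q'}$ by symmetry, and then shows $\brak{Q}\cong\brak{Q'}$ via a foam with two singular vertices, the isomorphism holding precisely by (MP). Your sentence ``checks that the identifications intertwine the connecting maps'' is exactly the hard part, and it cannot be completed with (SqR$_1$) alone: the relevant decomposition is Lemma~\ref{lem:moy-F}\,(\ref{eq:decomp-sqr2}), whose proof rests on (SqR$_2$). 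Without either that computation or an explicit intermediate complex playing the role of $\brak{Q}$, the RIII argument is incomplete.
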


\begin{proof}
\n{\em Reidemeister I}: 
Consider diagrams $D$ and $D'$ that differ in a circular region, as in the 
figure below.
$$D=\figins{-13}{0.4}{lkink}\qquad
D'=\figins{-13}{0.4}{ReidI-1}\,$$
We give the homotopy between complexes $\brak{D}$ and $\brak{D'}$ in 
Figure~\ref{fig:RI_Invariance}~\footnote{We thank Christian 
Blanchet for spotting a mistake in a previous version of this diagram.}.
\begin{figure}[h!]
$\xymatrix@R=32mm{
  \brak{D}:
\\
  \brak{D'}:
}
\xymatrix@C=35mm@R=28mm{
 \figins{0}{0.3}{ReidI-2}
  \ar@<4pt>[d]^{
        g^0 \;=\; \figins{-22}{0.6}{ReidI-g} }
  \ar@<4pt>[r]^{
        d \;=\; \figins{-24}{0.6}{ReidI-d} } &
  \figins{0}{0.3}{ReidI-3} \ar@<2pt>[l]^{
     \quad h \;= \sum\limits_{a+b+c=N-2}\ \figins{-24}{0.6}{ReidI-h1} } \\
 \figins{0}{0.3}{ReidI-1}\ar[r]^0 
  \ar@<4pt>[u]^{
        f^0 \;= \sum\limits_{i=0}^{N-1}\ \figins{-24}{0.6}{ReidI-f} } &
  0 \ar@{<->}[u]_0 
}$
\caption{Invariance under $Reidemeister \ I$} 
\label{fig:RI_Invariance}
\end{figure}
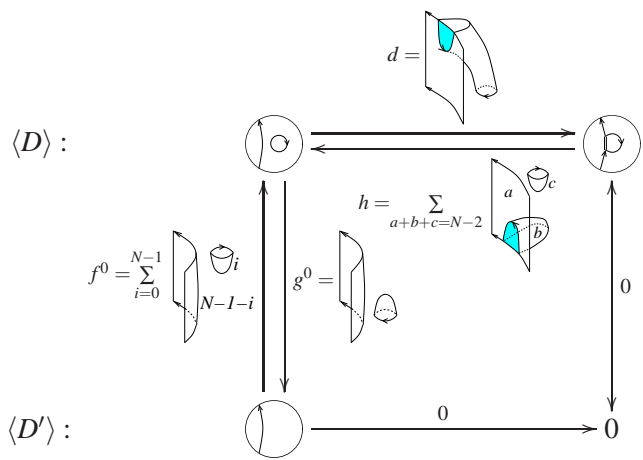 
\noindent  By the Sphere Relation (S$_1$), we get $g^0f^0=Id_{\brak{D'}^0}$.
To see that $df^0=0$ holds, one can use dot mutation to get a new labelling of the same foam 
with the double facet labelled by $\pi_{N-1,0}$, which kills the foam by the dot conversion 
relations. The equality $dh=Id_{\brak{D}^1}$ follows from (\emph{DR$_2$}). To show that 
$f^0g^0+hd=Id_{\brak{D}^0}$, apply (RD$_1$) to $hd$ and then cancel all terms which appear 
twice with opposite signs. What is left is the sum of $N$ terms which is equal to 
$Id_{\brak{D}^0}$ by (CN$_1$). Therefore $\brak{D'}$  is homotopy-equivalent to $\brak{D}$.

\bigskip
\n{\em Reidemeister IIa}:
Consider diagrams $D$ and $D'$ that differ in a circular region, as
in the figure below.
$$
D=\figins{-13}{0.4}{DReidIIa}\qquad
D'=\figins{-13}{0.4}{twoedges}$$
We only sketch the arguments that the diagram in 
Figure~\ref{fig:RIIaInvariance} defines a homotopy 
equivalence between the complexes $\brak{D}$ and 
$\brak{D'}$: 
\begin{figure}[hb!]
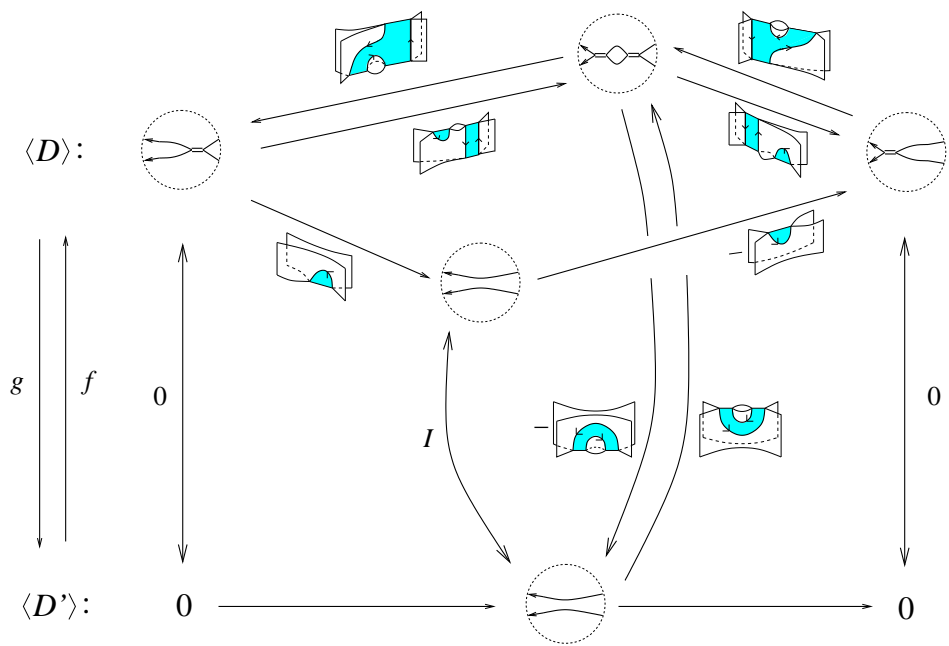

\figwins{0}{4.9}{ReidIIa}
\caption{Invariance under $Reidemeister \ IIa$} \label{fig:RIIaInvariance}
\end{figure}

\begin{itemize}
\item $g$ and $f$ are morphisms of complexes (use only isotopies); 
\item $g^1f^1=Id_{\brak{D'}^1}$  (uses equation (\ref{eq:bubble-ij})); 
\item $f^0g^0+hd=Id_{\brak{D}^0}$ and $f^2g^2+dh=Id_{\brak{D}^2}$  (use isotopies);
\item $f^1g^1+dh+hd=Id_{\brak{D}^1}$  (use (\emph{DR}$_1$)).
\end{itemize}

\bigskip
\n{\em Reidemeister IIb}:
Consider diagrams $D$ and $D'$ that differ only in a circular region, as 
in the figure below.
$$
D=\figins{-13}{0.4}{DReidIIb}\qquad
D'=\figins{-13}{0.4}{twoedgesop}$$
Again, we sketch the arguments that the diagram in 
Figure~\ref{fig:RIIbInvariance} defines a 
homotopy equivalence between the complexes $\brak{D}$ and $\brak{D'}$:
\begin{figure}[h!]
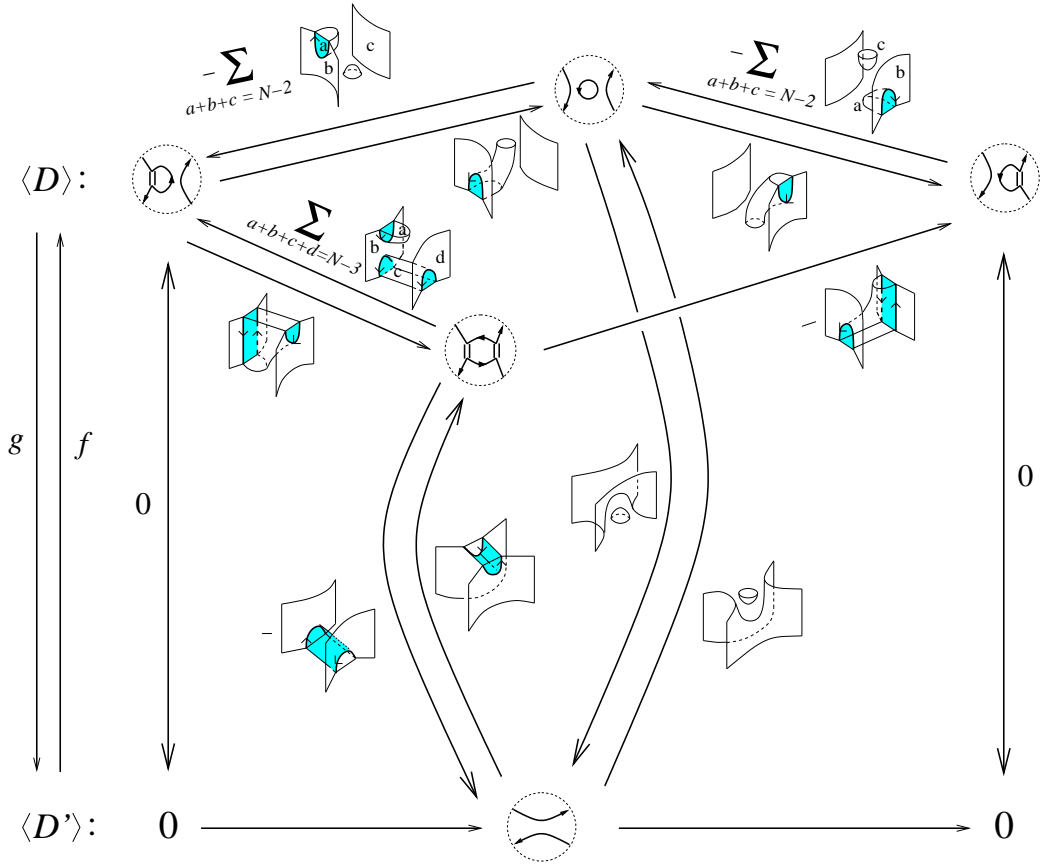

\figwins{0}{5.4}{ReidIIb}
\caption{Invariance under $Reidemeister \ IIb$} \label{fig:RIIbInvariance}
\end{figure}
\begin{itemize}
\item $g$ and $f$ are morphisms of complexes (use isotopies and DR$_2$); 
\item $g^1f^1=Id_{\brak{D'}^1}$  (use (\emph{FC}) and (\emph{S}$_1$)); 
\item $f^0g^0+hd=Id_{\brak{D}^0}$ and $f^2g^2+dh=Id_{\brak{D}^2}$ (use (\emph{RD}$_1$) and 
(\emph{DR}$_2$));
\item $f^1g^1+dh+hd=Id_{\brak{D}^1}$ (use (\emph{DR}$_2$), (\emph{RD}$_1$), 
(\emph{3C}) and (\emph{SqR}$_1$)).
\end{itemize}

\bigskip
\n{\em Reidemeister III}:
Consider diagrams $D$ and $D'$ that differ only in a circular region, as
in the figure below.
$$
D =\figins{-13}{0.4}{D1ReidIII}\qquad
D'=\figins{-13}{0.4}{D2ReidIII}$$
\begin{figure}[ht!]
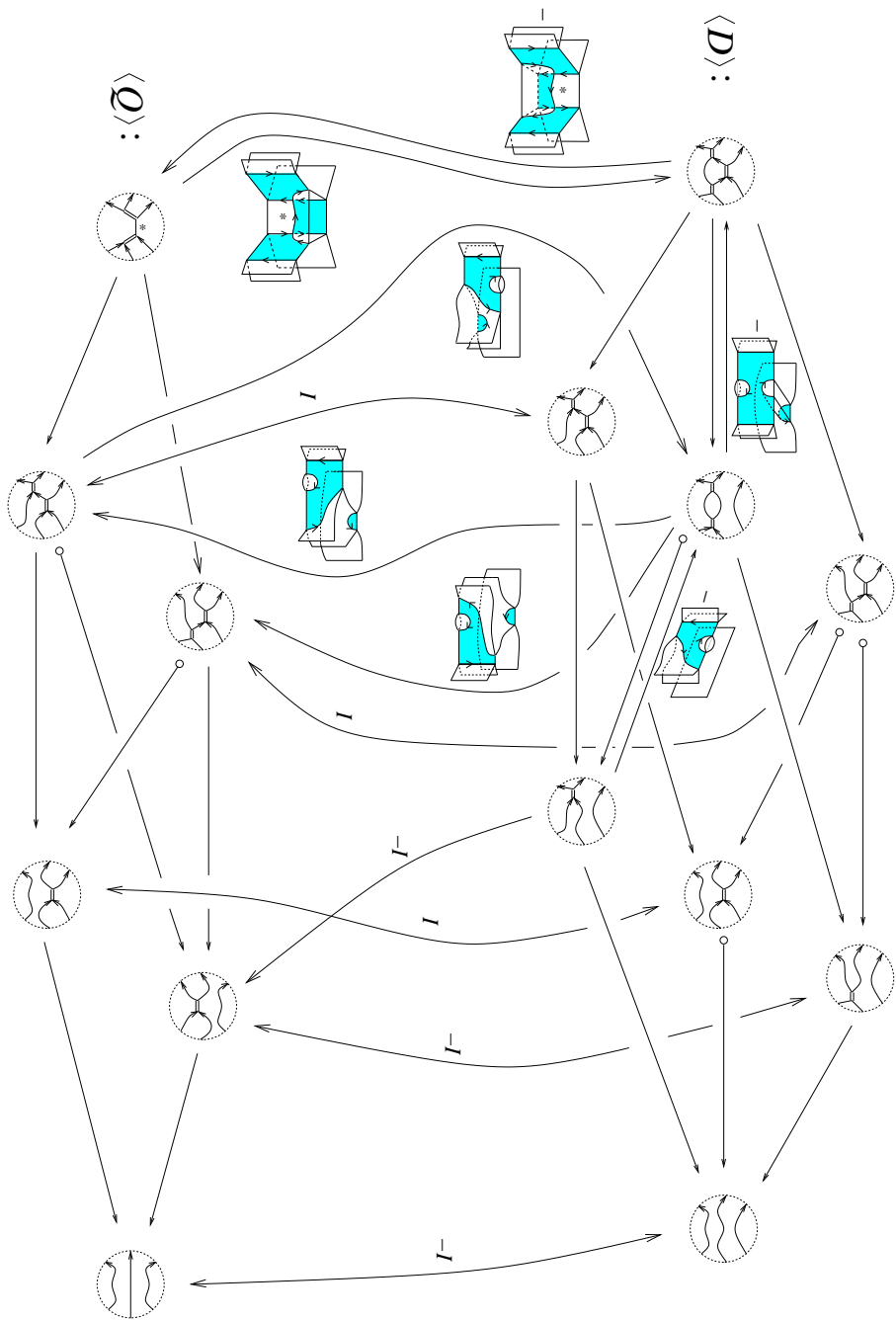

\figwins{0}{4.7}{ReidIII} 
\caption{Invariance under $Reidemeister \ III$. A circle attached to the tail of an arrow indicates that the corresponding morphism has a minus sign.}
\label{fig:RIIIInvariance}
\end{figure}
In order
to prove that $\brak{D'}$ is homotopy equivalent to $\brak{D}$ 
we show that the latter is homotopy equivalent to a third complex denoted 
$\brak{Q}$ in Figure~\ref{fig:RIIIInvariance}. The 
differential in $\brak{Q}$ in homological 
degree $0$ is defined by 
$$\figins{-10}{0.6}{diffQ}$$
for one summand and a similar foam for the other summand.   
By applying a symmetry relative to a horizontal axis crossing each diagram 
in $\brak{D}$ and $\brak{Q}$ we obtain a homotopy equivalence between 
$\brak{D'}$ and $\brak{Q'}$. It is easy to see that $\brak{Q}$ and 
$\brak{Q'}$ are isomorphic. In homological degree $0$ the isomorphism 
is given by the obvious foam with two singular vertices. In the other 
degrees the isomorphism is given by the identity (in degrees $1$ and $2$ 
one has to swap the two terms of course). The fact that this defines an 
isomorphism follows immediately from the (MP) relation. We conclude 
that $\brak{D}$ and $\brak{D'}$ are homotopy equivalent.    
\end{proof}

From Theorem~\ref{thm:inv} we see that we can use any diagram $D$ of $L$ to obtain the 
invariant in $\mathbf{Kom}_{/h}(\foam)$ which justifies the notation $\brak{L}$ for $\brak{D}$.

\section{Functoriality}
\label{sec:functoriality}
The proof of the functoriality of our homology follows the same line of 
reasoning as in \cite{bar-natancob} and \cite{mackaay-vaz}.
As in those papers, it is clear that the construction and the results 
of the previous sections 
can be 
extended to the category of tangles, following a similar approach using open webs 
and foams with corners. A foam with corners should be considered as living inside a cylinder, as 
in \cite{bar-natancob}, such that the intersection with the cylinder is just a disjoint set of 
vertical edges.  

The degree formula can be extended to the category of open webs and foams with corners by
\begin{defn}
Let $u$ be a foam with $d_\bdot$ dots of type $\bdot$, $d_\wdot$ dots of type 
$\wdot$ and $d_\bwdot$ dots of type $\bwdot$. Let $b_i$ be the number of vertical edges 
of type $i$ of the boundary of $u$. The $q$-grading of $u$ is given 
by
\begin{equation}
q(u)= -\sum_{i=1}^3 i(N-i) q_i(u) - 2(N-2)q_{\sk}(u) + 
\dfrac{1}{2}\sum_{i=1}^3 i(N-i)b_i+ 2d_\bdot + 4d_\wdot +6d_\bwdot.
\end{equation}
\end{defn}

Note that the Kapustin-Li formula also induces a grading on foams with corners, because 
for any foam $u$ between two (open) webs $\Gamma_1$ and $\Gamma_2$, it gives an element 
in the graded vector space $\Ext(M_1,M_2)$, 
where $M_i$ is the matrix factorization associated to $\Gamma_i$ in \cite{KR}, for $i=1,2$. 
Recall that the $\Ext$ groups have a $\bZ/2\bZ\times\bZ$-grading. For foams there is 
no $\bZ/2\bZ$-grading, but the $\bZ$-grading survives.  

\begin{lem} For any foam $u$, the Kapustin-Li grading of $u$ is equal to $q(u)$.
\end{lem}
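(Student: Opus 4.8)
The plan is to compare two gradings defined on the same space of foams: the combinatorial $q$-grading $q(u)$ from the degree formula, and the grading induced by the Kapustin--Li formula via the isomorphism $\langle\ |_{KL}\colon\foam(\emptyset,\Gamma)\to H^*(\Gamma)$ (and more generally via $\Ext(M_1,M_2)$ for foams with corners). Both gradings are additive under gluing of foams, by the Lemma in Section~\ref{sec:pre-foam} for $q(u)$ and by the gluing property of the Kapustin--Li formula (Section~\ref{sec:glueing}) for the KL-grading. Hence it suffices to check the equality on a generating set of foams, and then to check that the two gradings agree on the matrix factorizations assigned to webs, i.e. that the $\bZ$-grading on $\Ext(M_1,M_2)$ matches $q$ on the nose.

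First I would reduce to closed foams: a foam $u\colon\Gamma_1\to\Gamma_2$ gives an element of $\Ext(M_{\Gamma_1},M_{\Gamma_2})$, which by the gluing/closure discussion in Section~\ref{sec:glueing} (and corollary~6 of~\cite{KR}) corresponds up to a fixed global $q$-shift to an element of $H^*(\Gamma_2\cup\overline{\Gamma_1})$ represented by the closed-up foam. The boundary term $\tfrac12\sum_i i(N-i)b_i$ in the degree formula is precisely designed to absorb this shift, so it is enough to prove $q(u)=\deg_{KL}(u)$ for closed $u$. For a closed foam, $\langle u\rangle_{KL}\in\bQ$ is nonzero only in a single internal degree, and the claim is that $q(u)=0$ exactly when $\langle u\rangle_{KL}$ can be nonzero; equivalently, that the Kapustin--Li evaluation~(\ref{ev}) is homogeneous of degree $0$ in the total $q$-grading once one assigns $\deg x=2$, $\deg$(double-facet variable)$=2,4$, etc. I would verify this degree bookkeeping directly from formula~(\ref{ev}): the element $KL_u$ has degree determined by the Euler characteristics of the facets and the singular graph (this is exactly the computation already carried out for $\Theta_1$, where $KL_{\Theta_1}$ came out homogeneous of degree $4N-6$), the factor $\det(\partial_i\partial_j W_F)^{g(F)}$ contributes a known degree per handle, the dot polynomial $f$ contributes $2d_\bdot+4d_\wdot+6d_\bwdot$, and the trace form $\epsilon$ is nonzero only on the top class, whose degree is fixed. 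Matching all these contributions against the summands $-\sum_i i(N-i)q_i(u)-2(N-2)q_{\sk}(u)+2d_\bdot+4d_\wdot+6d_\bwdot$ in the definition of $q(u)$ is the core of the argument.

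Concretely I would organize the check by the generating pieces: (i) simple, double and triple facets carrying dots, (ii) the two types of singular edges, whose associated Koszul matrix factorizations $MF_1$ and $MF_2$ have differentials of a definite $q$-degree that one reads off from the entries $x+y-s$, $xy-t$, $A'$, $B'$ (resp.\ $x+s-p$, $xs+t-q$, $xt-r$, $A$, $B$, $C$), and (iii) the singular vertices, where $O_v$ is the degree~$0$ homotopy equivalence $w_v$ (this is why the choice was normalized to degree $0$, and it is exactly the point where the $2(N-2)q_{\sk}$ term is pinned down, since a vertex is where the singular graph's Euler characteristic is affected). Additivity then propagates the equality from these pieces to all foams. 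The main obstacle I anticipate is precisely the contribution of the singular graph and the singular vertices: keeping careful track of how $\chi(\sk(u))$ and $\tfrac12\chi(\partial\sk(u))$ enter, of the degree shifts hidden in the phrase ``both isomorphisms hold up to a global shift in $q$'' in~(\ref{eq:vertex}), and of the normalization factors $(N+1)^{2e_{112}+3e_{123}}$ from Section~\ref{sec:norm} (which are degree~$0$ scalars and hence harmless, but must be noted). Once the vertex and singular-edge contributions are correctly accounted for, the facet and dot contributions are a routine Euler-characteristic count, and the two gradings coincide.
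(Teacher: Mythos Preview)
Your outline is sound and would succeed, but it takes a genuinely different route from the paper. You propose to reduce to closed foams and then do a direct degree count from formula~(\ref{ev}): track the degree contributed by the trace $\epsilon$ on each facet's Jacobi algebra, by the genus factors $\det(\partial_i\partial_j W_F)^{g(F)}$, by the edge operators $O_e$ built from partial derivatives of the Koszul differentials, by the vertex operators $O_v$, and by the dots, and match all of this against the summands of $q(u)$. The paper instead argues indirectly. After noting additivity under glueing and that dots contribute equally, it reduces to the elementary generating foams (non-singular cobordisms, singular cups and caps, singular saddles, and the single-vertex foams of Figure~\ref{fig:elemfoams}) and then observes that the degrees of these generators are \emph{forced} by the relations already proved in Propositions~\ref{prop:principal rels1} and~\ref{prop:principal rels2} and Corollary~\ref{cor:tubes}. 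For instance, since the l.h.s.\ of (DR$_1$) is an identity foam of degree $0$ and the dot on the r.h.s.\ has degree $2$, any grading respecting (DR$_1$) must assign degree $-1$ to the singular cup; similarly (RD$_1$) and (RD$_2$) pin down the singular saddles, and (MP) forces the vertex foams to have degree $0$. Both gradings respect these relations (the KL-grading because the relations were derived from the KL evaluation, and $q$ by inspection), so they agree on the generators and hence everywhere. This sidesteps entirely the bookkeeping you anticipate as the main obstacle (the $q$-shifts in~(\ref{eq:vertex}) and the $q_{\sk}$ term), at the cost of relying on the relations; your approach is self-contained but considerably more laborious.
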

\begin{proof}
Both gradings are additive under horizontal and vertical glueing and 
are preserved by the 
relations in $\foam$. Also the degrees of the dots are the same in both 
gradings. 
Therefore it is enough to establish the equality between the gradings 
for the foams which generate $\foam$. For any foam without a singular graph the gradings are 
obviously equal, so let us concentrate on the singular cups and caps, the singular saddle point 
cobordisms and the cobordisms with one singular vertex in 
Figure~\ref{fig:elemfoams}. To compute the degree of the singular cups and caps, for both gradings, one can use the digon removal 
relations. For example, let us consider the singular cup 
$$\figins{-10}{0.4}{scup}.$$ 
Any grading that preserves relation (DR$_1$) has to attribute the value of 
$-1$ to that foam, because the foam on the l.h.s. of (DR$_1$) has degree $0$, being an identity, 
and the dot on the r.h.s. has degree $2$. Similarly one can compute the degrees of the other 
singular cups and caps. To compute the degree of the singular saddle-point cobordisms, one can 
use the removing disc relations (RD$_1$) and (RD$_2$). For example, the saddle-point cobordism in 
Figure~\ref{fig:elemfoams} 
has to have degree $1$. Finally, using the (MP) relation 
one shows 
that both foams on the r.h.s. in Figure~\ref{fig:elemfoams} 
have to have degree $0$.          
\end{proof}

\begin{cor}
For any closed foam $u$ we have that $\langle u\rangle_{KL}$ is zero if $q(u)\neq 0$.
\end{cor}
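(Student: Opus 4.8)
The plan is to derive the corollary directly from the preceding Lemma, using the description of the evaluation $\langle\;\rangle_{KL}$ on closed foams in terms of $\Ext$-groups given in Section~\ref{sec:KL}.

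First I would recall, as in subsection~\ref{sec:glueing}, that a closed foam $u$ may be viewed as a morphism from the empty web to itself, and that from this point of view $\langle u\rangle_{KL}$ equals, up to the nonzero normalization scalar $(N+1)^{2e_{112}(u)+3e_{123}(u)}$, the element that the Kapustin-Li formula assigns to $u$ in $\Ext(M_\emptyset,M_\emptyset)$. Since $M_\emptyset$ is the matrix factorization of the empty web, namely the ground ring $\bQ$ with zero potential, $\Ext(M_\emptyset,M_\emptyset)\cong\bQ$ is one-dimensional and concentrated in internal degree $0$.

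Next I would invoke the preceding Lemma, which identifies the Kapustin-Li grading of $u$ with $q(u)$. If $q(u)\neq 0$, then $\langle u\rangle_{KL}$ would be a homogeneous element of nonzero degree inside a graded vector space that lives entirely in degree $0$, hence it must be zero. Equivalently, and without appealing to the $\Ext$-picture, one can argue straight from~(\ref{ev}): the trace $\epsilon$ on the product $J$ of the Jacobi algebras of the facets is homogeneous and annihilates every element whose degree is not the top degree of $J$, so $\langle u\rangle_{KL}\neq 0$ forces $KL_u\,f$ together with the correction terms $\det(\partial_i\partial_jW_F)^{g(F)}$ to have precisely that degree, a condition which, by the degree bookkeeping underlying the previous Lemma, is exactly $q(u)=0$.

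The substantive work has in fact already been carried out in the Lemma, where the degrees contributed by the genus-dependent factors $\det(\partial_i\partial_jW_F)^{g(F)}/(N+1)^{g'(F)}$ and by the normalization are accounted for; given that, the corollary is essentially formal, and the only thing that remains is the harmless identification of $\langle u\rangle_{KL}$ with the corresponding element of $\Ext(M_\emptyset,M_\emptyset)$ (or, on the alternative route, the observation that $\epsilon$ is supported in a single degree). So I do not expect a genuine obstacle here beyond making that identification precise.
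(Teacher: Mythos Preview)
Your proposal is correct and matches the paper's treatment: the paper states the corollary without proof, regarding it as immediate from the preceding Lemma, and your argument via $\Ext(M_\emptyset,M_\emptyset)\cong\bQ$ concentrated in degree $0$ (or equivalently via the homogeneity of $\epsilon$) is exactly the intended justification.
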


As in \cite{mackaay-vaz} we have the following lemma, which 
is the analogue of Lemma 8.6 in \cite{bar-natancob}:

\begin{lem}
For a crossingless tangle diagram $T$ we have that
$\mbox{Hom}_{\foam}(T,T)$ is zero in negative degrees and $\bQ$ in degree zero.
\end{lem}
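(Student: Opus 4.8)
The plan is to adapt the reasoning of Lemma~8.6 of \cite{bar-natancob} and of the corresponding lemma of \cite{mackaay-vaz}. One identifies $\mathrm{Hom}_{\foam}(T,T)$, up to a single overall degree shift, with $\mathrm{Hom}_{\foam}(\emptyset,\widehat{T})$, where $\widehat{T}$ is the closed web obtained by doubling $T$, and then uses that $\widehat{T}$ is a disjoint union of simple circles, for which the relevant $\mathrm{Hom}$-space is completely understood via the relations of Proposition~\ref{prop:principal rels1}. A pleasant feature of this approach is that it never requires analyzing foams with singular vertices directly.

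First I would set up the doubling. Since $T$ is a crossingless tangle diagram it contains no trivalent vertices and no double or marked edges, so it is a disjoint union of $n$ embedded oriented arcs. Gluing $T$ to its mirror image $\overline{T}$ along their common boundary therefore produces a closed web $\widehat{T}$ with only simple edges and no vertices, i.e.\ a disjoint union $S^1_1\sqcup\cdots\sqcup S^1_n$ of $n$ simple circles. The identity foam $\Id_T=T\times I$ has $q$-degree $0$; this follows from the degree formula for foams with corners (the contribution $-\sum_i i(N-i)q_i(\Id_T)=-(N-1)n$ is cancelled by the boundary term $\frac12\sum_i i(N-i)b_i=(N-1)n$), and it is in any case forced by relation (DR$_1$), exactly as in the proof of the previous lemma.

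The key step is bending. By the glueing properties of the Kapustin--Li formula recalled in Section~\ref{sec:KL}, together with the injectivity of $\langle\;\rangle_{KL}$ used in the proof of Proposition~\ref{prop:principal rels2}, the operation of bending a foam $u\colon T\to T$ into a foam $\emptyset\to\widehat{T}$ is compatible with the identifications defining $\foam$, is invertible, and respects the $q$-grading up to a fixed global shift depending only on $T$. It therefore induces an isomorphism of graded $\bQ$-vector spaces $\mathrm{Hom}_{\foam}(T,T)\cong\mathrm{Hom}_{\foam}(\emptyset,\widehat{T})\{\sigma\}$ for some $\sigma\in\bZ$, under which $\Id_T$ corresponds to the foam $F$ consisting of $n$ disjoint dotless simple cups, one capping off each circle of $\widehat{T}$. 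Now $\mathrm{Hom}_{\foam}(\emptyset,\widehat{T})$ is easy to compute: inserting a trivial cylinder near each circle $S^1_j$ and applying (CN$_1$), then evaluating the resulting closed pieces with the sphere relation (S$_1$), one finds that this space is spanned by the foams $D_{a_1,\dots,a_n}$ which are disjoint unions of simple cups carrying $a_1,\dots,a_n$ dots with $0\le a_j\le N-1$; these are $\bQ$-linearly independent, since pairing $D_{a_1,\dots,a_n}$ with the foam built from simple caps carrying $N-1-a_j$ dots gives, by (S$_1$), a nonzero scalar. Hence $\mathrm{Hom}_{\foam}(\emptyset,\widehat{T})\cong\bigotimes_{j=1}^{n}\bQ[x_j]/(x_j^{N})$, and the degree formula gives $q(D_{a_1,\dots,a_n})=-n(N-1)+2\sum_j a_j$, so this space is concentrated in degrees $-n(N-1),-n(N-1)+2,\dots,n(N-1)$ with one-dimensional bottom piece spanned by $F=D_{0,\dots,0}$.

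Finally I would pin down $\sigma$ and conclude. Since $\Id_T$ has degree $0$ and corresponds to $F$, which has degree $-n(N-1)$ inside $\mathrm{Hom}_{\foam}(\emptyset,\widehat{T})$, the shift must carry the bottom degree $-n(N-1)$ of the latter space to $0$. Transporting the description above through this shift, $\mathrm{Hom}_{\foam}(T,T)$ is concentrated in degrees $0,2,\dots,2n(N-1)$; in particular it vanishes in negative degrees, its degree-zero part is one-dimensional and spanned by $\Id_T$, and $\Id_T\ne 0$ because it corresponds to the nonzero basis vector $F$. This is exactly the assertion of the lemma. The one genuinely delicate point is the bending step — verifying that bending descends to the quotient category $\foam$ and is graded up to a global shift, and fixing that shift through the correspondence $\Id_T\leftrightarrow F$; everything else is routine bookkeeping with the relations of Proposition~\ref{prop:principal rels1} and with the degree formula.
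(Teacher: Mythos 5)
Your argument is correct, but it takes a somewhat different route than the paper's. The paper works directly with $u\in\mbox{Hom}_{\foam}(T,T)$ inside the cylinder: it drags the boundary circles slightly into the interior, applies (CN$_1$) there, and writes $u$ as a linear combination of terms of the form (dotted identity on $T$) $\sqcup$ (closed foam). Additivity of the $q$-grading and the observation that the dotted identity always has non-negative degree then finish both claims at once, since a closed foam of nonzero degree evaluates to zero. Your approach instead first bends $u$ into a foam $\emptyset\to\widehat{T}$, computes $\mbox{Hom}_{\foam}(\emptyset,\widehat{T})$ explicitly as $\bigotimes_j\bQ[x_j]/(x_j^N)$ via (CN$_1$) and (S$_1$), and then transports back. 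This buys you slightly more — a complete description of $\mbox{Hom}_{\foam}(T,T)$ in all degrees, not just the vanishing in degrees $\le 0$ — but at the price of having to justify the bending step, which you rightly flag as the delicate point: one has to check that bending respects the quotient defining $\foam$, which ultimately comes down to the glueing property of the Kapustin--Li formula and the non-degeneracy of the trace pairing on $H^*(\widehat{T})$. The paper avoids that entirely by staying in the cylinder. Both routes are sound; the paper's is shorter and uses only tools already deployed in the preceding lemmas, while yours is closer in spirit to Bar-Natan's original Lemma 8.6.
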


\begin{proof}
Let $T$ be a crossingless tangle diagram and $u\in\mbox{Hom}_{\foam}(T,T)$.   
Recall that $u$ can be considered to be in a cylinder with vertical edges 
intersecting the latter. The boundary of $u$ consists of a disjoint union 
of circles (topologically speaking). By dragging these 
circles slightly into the interior of $u$ one gets a disjoint union of circles in 
the interior of $u$. Apply relation (CN$_1$) to each of these circles. We 
get a linear combination of terms in $\mbox{Hom}_{\foam}(T,T)$ each of which 
is the disjoint union of the identity on $T$, possibly decorated with dots, 
and a closed foam, which can be evaluated by $\langle\;\rangle_{KL}$. Note 
that the identity of $T$ with any number of dots has always 
non-negative degree. Therefore, if $u$ has negative degree, the closed 
foams above have negative degree as well and evaluate to zero. This shows 
the first claim in the lemma. If $u$ has degree $0$, the only terms 
which survive after evaluating the closed foams have degree $0$ as well and 
are therefore a multiple of the identity on $T$. This proves the second 
claim in the lemma. 
\end{proof}

The proofs of Lemmas~8.7-8.9 in~\cite{bar-natancob} are ``identical''. 
The proofs of Theorem 4 and Theorem 5 follow the same reasoning but 
have to be adapted as in \cite{mackaay-vaz}. One has to use the 
homotopies of our Section~\ref{sec:invariance} instead of the homotopies 
used in \cite{bar-natancob}. 
Without giving further details, we state the 
main result. Let $\mathbf{Kom}_{/\bQ^*h}(\foam)$ denote the category 
$\mathbf{Kom}_{/h}(\foam)$ 
modded out by $\bQ^*$, the invertible rational numbers. Then

\begin{prop}
\label{prop:func}
$\brak{\;}$ defines a functor $\mathbf{Link}\ra 
\mathbf{Kom}_{/\bQ^*h}(\foam )$.
\end{prop}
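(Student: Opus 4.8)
The plan is to transcribe the argument of Bar-Natan \cite{bar-natancob}, in the form adapted to webs and foams by Mackaay--Vaz \cite{mackaay-vaz}. First I would extend everything constructed so far to tangle diagrams: open webs and foams with corners sitting inside a cylinder, as indicated at the start of this section, so that every tangle diagram $T$ has a complex $\brak{T}$ in $\foam$ and the $q$-grading of the preceding definition makes sense. A link cobordism $\Sigma\colon L_0\to L_1$ is presented by a movie, i.e.\ a finite sequence of elementary steps relating successive diagrams: Reidemeister moves and Morse moves (births, deaths, saddles). To a Reidemeister step I assign the homotopy equivalence built in the proof of Theorem~\ref{thm:inv}, and to a Morse step the evident elementary foam (a singular or ordinary cup, cap or saddle, as in Figure~\ref{fig:elemfoams}). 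Composing these across the movie produces a morphism $\brak{\Sigma}\in\mbox{Hom}_{\mathbf{Kom}_{/h}(\foam)}(\brak{L_0},\brak{L_1})$. That this assignment respects identities and composition of cobordisms is immediate from the construction, so the whole content of the proposition is that $\brak{\Sigma}$, viewed up to multiplication by a scalar in $\bQ^*$, depends only on the isotopy class of $\Sigma$.

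Since two movies presenting isotopic cobordisms are related by a finite list of movie moves (the Carter--Saito moves, exactly as used in \cite{bar-natancob}), it is enough to check that $\brak{\Sigma}$ is unaffected, up to homotopy and a nonzero rational scalar, by each movie move. This is where the smallness of the Hom spaces does the work and no genuinely new computation is required. The crossingless-tangle lemma above gives $\mbox{Hom}_{\foam}(T,T)=\bQ\,\mathrm{id}_T$ in degree zero and $0$ in negative degrees for crossingless $T$; the analogues of Lemmas 8.7--8.9 of \cite{bar-natancob} then follow by the identical arguments (cut every closed boundary component with (CN$_1$), reducing to closed foams evaluated by $\langle\;\rangle_{KL}$), and show that $\mbox{Hom}_{\mathbf{Kom}_{/h}(\foam)}(\brak{T_0},\brak{T_1})$ is at most one-dimensional in the relevant degree for the tangle diagrams $T_0,T_1$ occurring in a movie move. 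For each movie move the two candidate maps are degree-preserving chain maps between the same pair of complexes --- here the grading lemma of this section, identifying the Kapustin--Li grading of $u$ with $q(u)$, is used to see that cups, caps, saddles and the Reidemeister homotopy equivalences all carry the expected degrees --- and each is an isomorphism in $\mathbf{Kom}_{/h}(\foam)$, hence nonzero; two nonzero vectors in a one-dimensional space are proportional by a factor in $\bQ^*$. This is precisely the adaptation of Theorems 4 and 5 of \cite{bar-natancob} carried out as in \cite{mackaay-vaz}, with the homotopies of Section~\ref{sec:invariance} replacing those of \cite{bar-natancob}.

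The main obstacle is therefore not a single hard step but the case analysis over the movie moves, together with the handful of them that need the same ad hoc arguments as in \cite{bar-natancob}: certain moves involve configurations with singular vertices, for which $\langle\;\rangle_{KL}$ cannot be evaluated purely combinatorially, so one argues instead by isolating the relevant closed foam or by a homological-degree count to confirm that the pertinent Hom space really is one-dimensional and that both chain maps sit in it nontrivially. One also has to be careful with the grading bookkeeping for foams with corners, which is exactly what the Kapustin--Li-grading lemma secures: it is the degree constraint that forces the movie-move maps into the degree where the Hom space is one-dimensional. Beyond this bookkeeping the proof is a direct transcription of \cite[Section~8]{bar-natancob}, and I would present it as such rather than re-deriving each movie move from scratch.
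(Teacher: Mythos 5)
Your proposal is the same argument the paper gives: extend to tangles/open webs and foams with corners, establish that the Kapustin--Li grading equals $q(u)$, prove the crossingless-tangle Hom-space lemma, and then defer the movie-move verification to Bar-Natan's Lemmas 8.7--8.9 and Theorems 4--5 as adapted in Mackaay--Vaz, with the homotopies of Section~\ref{sec:invariance}. You unpack a bit more of what ``adapted'' means (Carter--Saito moves, one-dimensionality of the relevant Hom spaces, the need for ad hoc checks that neither candidate map vanishes for the Morse-type moves), but the strategy and the key ingredients are exactly those of the paper.
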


\section{The $\ \sln$-link homology}
\label{sec:taut-functor}

\begin{defn}
Let $\Gamma$, $\Gamma'$ be closed webs and $f\in\mbox{Hom}_{\foam }(\Gamma,\Gamma')$. Define a functor 
$\mathcal{F}$ between the categories $\foam$ and the category $\V$ 
of $\bZ$-graded rational vector spaces and ${\bZ}$-graded linear maps as
\begin{enumerate}
\item
$\cF(\Gamma)=\mbox{Hom}_{\foam}(\emptyset,\Gamma),$
\item $\cF(f)$ is the $\bQ$-linear map 
$\cF(f):\mbox{Hom}_{\foam }(\emptyset,\Gamma)\ra\mbox{Hom}_{\foam }(\emptyset,\Gamma')$ given by composition.
\end{enumerate} 
\end{defn}
\vspace{1ex}

Note that $\cF$ is a tensor functor and that the degree of $\cF(f)$ equals $q(f)$. Note also that 
$\cF(\unknot)\cong H^*\left(\cp{N-1}\right)\{-N+1\}$ and 
$\cF(\figins{-2.5}{0.16}{dble-circ})\cong H^*\left(\G_{2,N}\right)\{-2N+4\}$.

The following are a categorified version of the relations in Figure~\ref{fig:moy}.

\begin{lem}[MOY decomposition]\label{lem:moy-F}
We have the following decompositions under the functor $\cF$:
\begin{enumerate}
\item \label{eq:decomp-dr1}
$\cF\left(
\figins{-8}{0.3}{digon-up}\right)\cong \cF\left(
\figins{-8}{0.3}{dbedge-up}\right)\left\{-1\right\}\bigoplus \ \cF\left(
\figins{-8}{0.3}{dbedge-up}\right)\left\{1\right\}$.
\vspace{1em}
\item \label{eq:decomp-dr2}
$\cF\left(
\figins{-8}{0.3}{dbedge-dig}\right)\cong\bigoplus\limits_{i=0}^{N-2}\cF\left(
\figins{-8}{0.3}{edge-up}\right)\left\{2-N+2i\right\}$.
\vspace{1em}
\item \label{eq:decomp-sqr1}
$\cF\left(
\figins{-8}{0.3}{square}\right)\cong\cF\left(
\figins{-8}{0.3}{twoedges-lr}\right)\bigoplus\left(\bigoplus\limits_{i=0}^{N-3} \ \cF\left(
\figins{-8}{0.3}{twoedges-ud}\right)\left\{3-N+2i\right\}\right)$.
\vspace{1em}
\item \label{eq:decomp-sqr2}
$\cF\left(\figins{-20}{0.65}{moy5-1}\right) \bigoplus
\cF\left(\figins{-20}{0.65}{moy5-21}\right)\cong 
\cF\left(\figins{-20}{0.65}{moy5-2}\right)\bigoplus
\cF\left(\figins{-20}{0.65}{moy5-11}\right)
$.
\end{enumerate}
\end{lem}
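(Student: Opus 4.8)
The plan is to prove each of the four MOY decompositions using the functor $\cF$ and the relations already established in $\foam$, exploiting that $\cF(\Gamma)=\mbox{Hom}_{\foam}(\emptyset,\Gamma)$ and that the relations in Propositions~\ref{prop:principal rels1} and \ref{prop:principal rels2} translate directly into identities of linear maps between these Hom-spaces. The general strategy for each part is the usual one in this style of argument: exhibit explicit foams (with dots) realizing the claimed direct-sum inclusions $\cF(\Gamma')\{\text{shift}\}\hookrightarrow\cF(\Gamma)$ and projections $\cF(\Gamma)\twoheadrightarrow\cF(\Gamma')\{\text{shift}\}$, check that the composites of an inclusion followed by the matching projection give the identity on the summand (via the sphere, bubble and theta relations), that the composite of an inclusion with a non-matching projection vanishes, and that the sum of all inclusion--projection composites on $\cF(\Gamma)$ equals the identity (via the appropriate cutting-neck relation). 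The degree shifts are forced by the $q$-grading formula and the fact that $\cF$ preserves degree.

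Concretely: for part~(1) I would use the digon removal relation (DR$_1$) together with the sphere relation (S$_1$); the two summands correspond to capping the digon with a simple facet carrying $0$ or $1$ dots, and (DR$_1$) is exactly the statement that these two idempotents sum to the identity. For part~(2) I would use (DR$_2$), whose second form $\sum_{i=0}^{N-2}$ already displays the $N-1$ mutually orthogonal idempotents; orthogonality and the identity-sum come from the bubble relation (\ref{eq:bubble-ij}) and (S$_2$), and the shifts $\{2-N+2i\}$ match the degrees of the bubble foams. For part~(3), the square decomposition, I would invoke the first square removal relation (SqR$_1$): its right-hand side is precisely a sum of one ``through-strand'' idempotent plus $N-2$ idempotents factoring through $\figins{-8}{0.3}{twoedges-ud}$, and the orthogonality/completeness is checked using the (FC), (RD$_1$), (3C) relations of Corollary~\ref{cor:tubes} and (S$_1$)—essentially the computations already carried out in the Reidemeister~IIb part of Theorem~\ref{thm:inv}.

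Part~(4) is the genuinely new one and I expect it to be the main obstacle, since unlike the others it is not a splitting of a single web as a direct sum of copies of simpler webs but an isomorphism between two direct sums, corresponding to the square MOY move. Here I would proceed by constructing an explicit foam $\Phi$ from the web $\figins{-20}{0.65}{moy5-1}\sqcup\figins{-20}{0.65}{moy5-21}$ to $\figins{-20}{0.65}{moy5-2}\sqcup\figins{-20}{0.65}{moy5-11}$ built from the elementary singular foams (with the two types of singular vertices), together with a candidate inverse $\Psi$, and then verify $\Psi\Phi=\mathrm{Id}$ and $\Phi\Psi=\mathrm{Id}$ in $\foam$. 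The verification will require repeated use of the Matveev--Piergalini relation (MP) to slide singular vertices past each other, the square removal relations (SqR$_1$) and (SqR$_2$), the digon removal relations, and dot migration; in spirit this is the same bookkeeping as in the Reidemeister~III argument of Theorem~\ref{thm:inv}, where (MP) is the key move. Alternatively, and perhaps more cleanly, one can derive part~(4) formally from parts~(1)--(3): apply the digon and square decompositions to both sides of the identity to reduce each of the four webs in part~(4) to direct sums of a common collection of ``basic'' webs (iterated digons/squares over edges), count multiplicities and degree shifts using the quantum MOY identity $[a][b]=\cdots$ underlying Figure~\ref{fig:moy}, and conclude that the two sides are isomorphic as graded $\cF$-modules; one then upgrades this to an explicit isomorphism of foams by writing down the change-of-basis foam and checking it is invertible, which again reduces to (MP) and the bubble/sphere relations. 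I would present part~(4) via this second route, isolating the one explicit invertibility check (the foam with two singular vertices, invertible by (MP)) as the only real content, and leave the remaining routine degree/multiplicity comparisons to the reader.
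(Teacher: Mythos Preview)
Your treatment of parts (1)--(3) is essentially the paper's: explicit cup/cap foams with dots give the inclusions and projections, bubble and sphere relations give orthogonality, and (DR$_1$), (DR$_2$), (SqR$_1$) give completeness. Minor bookkeeping differs (for~(2) the paper uses (RD$_1$) and (S$_1$) rather than (S$_2$), and for~(3) it does not invoke (FC) or (3C)), but the idea is the same.

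For part (4) the paper takes a route you did not propose, and your preferred alternative has a gap. The paper does \emph{not} build a single foam between the two direct sums, nor does it reduce to (1)--(3). Instead it proves two independent splittings
\[
\cF(\text{moy5-1})\cong \cF(\text{moy5-11})\oplus \cF(\text{moy5-12}),
\qquad
\cF(\text{moy5-2})\cong \cF(\text{moy5-21})\oplus \cF(\text{moy5-22}),
\]
each established by two explicit foam maps in each direction, with completeness coming directly from the \emph{second} square removal relation (SqR$_2$); the ``extra'' summands moy5-12 and moy5-22 are webs containing a marked (triple) edge, and they are identified with one another by a foam with two singular vertices, invertible by (MP). Cancelling this common summand gives the claimed isomorphism.

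Your second route---deriving (4) formally from (1)--(3) by decomposing all four webs into common basic pieces---does not work as stated: the webs in~(4) cannot be fully simplified using only the simple/double digon and square decompositions, and the common summand that makes the argument go through is a web with a triple edge, produced precisely by (SqR$_2$). So (SqR$_2$) (and hence the triple-facet machinery) is essential input for~(4), not something you can bypass. Your first route (a direct $\Phi,\Psi$ between the two sums) could in principle be made to work using (SqR$_2$) and (MP), but it is strictly more laborious than the paper's two-splittings-plus-common-summand argument.
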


\begin{proof}
\emph{(\ref{eq:decomp-dr1})}: Define grading-preserving maps 
$$\varphi_0:\cF\left(
\figins{-8}{0.3}{digon-up}\right)\left\{1\right\} 
\ra \cF\left(\figins{-8}{0.3}{dbedge-up}\right)
\qquad
\varphi_1:\cF\left(
\figins{-8}{0.3}{digon-up}\right)\left\{-1\right\}  
\ra \cF\left(\figins{-8}{0.3}{dbedge-up}\right)
$$

$$\psi_0:\cF\left(
\figins{-8}{0.3}{dbedge-up}\right) \ra
\cF\left(
\figins{-8}{0.3}{digon-up}\right)\left\{1\right\} 
\qquad
\psi_1: \cF\left(
\figins{-8}{0.3}{dbedge-up}\right) \ra
\cF\left(
\figins{-8}{0.3}{digon-up}\right)\left\{-1\right\} 
$$
as
$$
\varphi_0=  \cF\left(\figins{-10}{0.3}{scap}\right),\ \ \ 
\varphi_1=\cF\left(\figins{-10}{0.3}{scapd} \right),\ \ \
\psi_0=\cF\left(\figins{-6}{0.3}{scupd}\right),\ \ \
\psi_1=-\cF\left( \figins{-6}{0.3}{scup}\right).
$$
The bubble identities imply that $\varphi_i\psi_j=\delta_{i,j}$ (for $i,j=0,1$) and from the (\emph{DR$_1$}) relation it follows that $\psi_0\varphi_0+\psi_1\varphi_1$ is the identity map in $\cF\left(\figins{-5}{0.2}{digon-l}\right)$.

\n\emph{(\ref{eq:decomp-dr2})}: Define grading-preserving maps 
$$\varphi_i:\cF\left(
\figins{-8}{0.3}{dbedge-dig}\right)\left\{N-2-2i\right\}
\ra \cF\left(\figins{-8}{0.3}{edge-up}\right),
\qquad 
\psi_i:\cF\left(
\figins{-8}{0.3}{edge-up}\right)
\ra\cF\left(
\figins{-8}{0.3}{dbedge-dig}\right)\left\{N-2-2i\right\},
$$
for $0\leq i\leq N-2$, as
$$
\varphi_i=  \cF\left(\figins{-24}{0.65}{scap_b-N2i}\right),\qquad
\psi_i=\sum\limits_{j=0}^{i}\cF\left(\figins{-22}{0.65}{scup_b-iij}\right).
$$

We have $\varphi_i\psi_k=\delta_{i,k}$ and $\sum\limits_{i=0}^{N-2}\psi_i\varphi_i=Id\left(\cF\left(\figins{-4}{0.2}{dbedge-dig-l} \right)\right)$. The first assertion is straightforward and can be checked using the (\emph{RD}) and (\emph{S$_1$}) relations and the second is immediate from the (\emph{DR$_2$}) relation, which can be written as  

$$
\figins{-32}{0.8}{digonfid_b} = \sum_{i=0}^{N-2}\sum_{j=0}^{i}\
\figins{-32}{0.8}{dig_rem_b-ij}
\rlap{\hspace{15ex}\text{(DR$_2$)}}
$$

\bigskip

\n\emph{(\ref{eq:decomp-sqr1})}: Define grading-preserving maps 
$$\varphi_i:\cF\left(
\figins{-8}{0.3}{square}\right)\left\{N-3+2i\right\}
\ra \cF\left(\figins{-8}{0.3}{twoedges-ud}\right),
\quad 
\psi_i:\cF\left(
\figins{-8}{0.3}{twoedges-ud}\right)
\ra\cF\left(
\figins{-8}{0.3}{square}\right)\left\{N-3+2i\right\},
$$
for $0\leq i\leq N-3$, and
$$\rho:\cF\left(
\figins{-8}{0.3}{square}\right)
\ra \cF\left(\figins{-8}{0.3}{twoedges-lr}\right),
\qquad 
\tau:\cF\left(
\figins{-8}{0.3}{twoedges-lr}\right)
\ra\cF\left(
\figins{-8}{0.3}{square}\right),
$$
as
$$
\varphi_i=  \cF\left(\figins{-22}{0.7}{hcylhalf-bot-i}\right), 
\qquad
\psi_i=\sum\limits_{a+b+c=N-3-i}\cF\left(\figins{-22}{0.7}{hcylhalf-top-abc}\right),
$$
$$
\rho = \cF\left(\figins{-22}{0.7}{hdcylhalf-bot}\right),
\qquad
\tau = -\cF\left(\figins{-22}{0.7}{hdcylhalf-top}\right).
$$

Checking that $\varphi_i\psi_k=\delta_{i,k}$ for $0\leq i,k\leq N-3 $, 
$\varphi_i\tau=0$ and $\rho\psi_i=0$, for $0\leq i\leq N-3$, and 
$\rho\tau=-1$ is left to the reader. From the (\emph{SqR$_1$}) relation it 
follows that 
$\tau\rho+\sum\limits_{i=0}^{N-3}\psi_i\varphi_i=Id\left(\cF\left(\figins{-4.5}{0.2}{square} 
\right)\right)$.

\n\emph{Direct Sum Decomposition~(\ref{eq:decomp-sqr2})}: We prove direct decomposition~(\ref{eq:decomp-sqr2}) showing that

$$\xymatrix@R=1mm{
\cF\left(\figins{-20}{0.65}{moy5-1}\right)\cong
\cF\left(\figins{-20}{0.65}{moy5-11}\right)\bigoplus
\cF\left(\figins{-20}{0.65}{moy5-12}\right) &
\cF\left(\figins{-20}{0.65}{moy5-2}\right)\cong
\cF\left(\figins{-20}{0.65}{moy5-21}\right)\bigoplus
\cF\left(\figins{-20}{0.65}{moy5-22}\right). \\
a) & b) 
}$$
Note that this suffices because the last term on the r.h.s. of a) is 
isomorphic to the last term on the r.h.s. of b) by the (MP) relation. 

To prove $a)$ we define grading-preserving maps 
$$\varphi_0:\cF\left(
\figins{-15}{0.5}{moy5-1}\right)\ra 
\cF\left(
\figins{-15}{0.5}{moy5-11}\right),
\qquad 
\varphi_1:\cF\left(
\figins{-15}{0.5}{moy5-1}\right)
\ra\cF\left(
\figins{-15}{0.5}{moy5-22}\right),
$$
$$\psi_0:\cF\left(
\figins{-15}{0.5}{moy5-11}\right)\ra 
\cF\left(
\figins{-15}{0.5}{moy5-1}\right),
\qquad 
\psi_1:\cF\left(
\figins{-15}{0.5}{moy5-22}\right)
\ra\cF\left(
\figins{-15}{0.5}{moy5-1}\right),
$$
by
$$
\varphi_0 = -\cF\left(\figins{-19}{0.6}{hcylhalf_b}\right),\qquad
\varphi_1 = -\cF\left(\figins{-19}{0.6}{hdighalf_b}\right), 
$$
$$
\psi_0 = \cF\left(\figins{-19}{0.6}{hcylhalf_t}\right),\qquad
\psi_1 = \cF\left(\figins{-19}{0.6}{hdighalf_t}\right).
$$

We have that $\varphi_i\psi_j=\delta_{i,j}$ for $i,j = 0,1 $ (we leave the details to the reader). From the (\emph{SqR$_2$}) relation it follows that $$\psi_0\varphi_0+\psi_1\varphi_1=
Id\left(\cF\left(\figins{-15}{0.5}{moy5-1} \right)\right).$$ Applying a symmetry to all diagrams 
in decomposition $a)$ gives us decomposition $b)$.
\end{proof}

In order to relate our construction to the $\sln$ polynomial we need to introduce shifts. We denote by $\{n\}$ an upward shift in the $q$-grading by $n$ and by $[m]$ an upward shift in the homological grading by $m$. 
\begin{defn}\label{def:gradedcomplx}
Let $\brak{L}_i$ denote the $i$-th homological degree of the complex $\brak{L}$. We define the 
$i$-th homological degree of the complex $\cF(L)$ to be  
$$\cF_i(L)=\cF\brak{L}_i[-n_-]\{ (N-1)n_+ - Nn_-  + i \},$$ 
where $n_+$ and $n_-$ denote the number of positive and negative crossings in the diagram used to 
calculate $\brak{L}$.
\end{defn}

We now have a homology functor $\mathbf{Link}\ra\V$ which 
we still call $\cF$. Definition~\ref{def:gradedcomplx}, 
Theorem~\ref{thm:inv} and Lemma~\ref{lem:moy-F} imply that
\begin{thm}
For a link $L$ the graded Euler characteristic of 
$H^*\left(\mathcal{F}(L)\right)$ equals $P_N(L)$, the $\sln$ polynomial of $L$. 
\end{thm}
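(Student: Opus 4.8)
The plan is to follow the standard categorification strategy: show that the graded Euler characteristic of the complex $\cF\brak{L}$, together with the grading shifts introduced in Definition~\ref{def:gradedcomplx}, reproduces exactly the state sum formula for $P_N(L)$ given at the end of Section~\ref{sec:slN}. First I would recall that for a finite-dimensional $\bZ$-graded vector space $V$ we write $\qdim V=\sum_j q^j\dim V_j$, and that for a bounded complex $C^\bullet$ of such spaces the graded Euler characteristic is $\chi_q(C^\bullet)=\sum_i(-1)^i\qdim C^i=\sum_i(-1)^i\qdim H^i(C^\bullet)$, the second equality being the usual fact that Euler characteristic is computed on homology. Since $\cF$ is exact in the appropriate sense (it is given by $\mathrm{Hom}$ out of $\emptyset$ and the complex is a complex of such Hom-spaces), $H^*(\cF(L))$ has the same graded Euler characteristic as $\cF\brak{L}$ itself, so it suffices to compute $\chi_q$ of the latter.

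Next I would unwind the construction of $\brak{D}$ from Figure~\ref{fig:geom-cplx}. Each crossing contributes a two-term complex whose terms are the two flattenings (oriented smoothing and double edge), sitting in homological degrees $0$ and $1$ (with the roles swapped for negative crossings). Taking tensor products over all crossings, the terms of $\brak{D}$ are indexed by the complete flattenings $\Gamma_i$ of $D$, and the homological degree of the term $\Gamma_i$ is $|i|$ for the appropriate convention. Applying $\cF$ and using that $\cF$ is a tensor functor, $\cF\brak{L}_i$ is a direct sum of the spaces $\cF(\Gamma_i)$ over flattenings in homological degree $i$. Hence
$$\chi_q\bigl(\cF\brak{L}\bigr)=\sum_i (-1)^{|i|}q^{|i|}\,\qdim \cF(\Gamma_i).$$
Then I would invoke Lemma~\ref{lem:moy-F}: the MOY decompositions there say precisely that $\qdim\cF(\Gamma)$ satisfies the same recursive relations (digon, square, the square-switch move) as $P_N(\Gamma)$, and the base cases $\cF(\unknot)\cong H^*(\cp{N-1})\{-N+1\}$ and $\cF(\text{double circle})\cong H^*(\G_{2,N})\{-2N+4\}$ give $\qdim\cF(\unknot)=[N]$ and $\qdim\cF(\text{double circle})=\qbin{N}{2}$ after accounting for the shifts, matching $P_N$ of a circle and a double circle. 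By the uniqueness of the MOY calculus (consistency is cited from~\cite{MOY}), it follows by induction on the number of edges/faces of the web that $\qdim\cF(\Gamma)=P_N(\Gamma)$ for every web $\Gamma$ built from simple and double edges, in particular for every complete flattening $\Gamma_i$.

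Finally I would reconcile the grading shifts. Plugging $\qdim\cF(\Gamma_i)=P_N(\Gamma_i)$ into the expression above and then applying the overall shift $[-n_-]\{(N-1)n_+-Nn_-+i\}$ from Definition~\ref{def:gradedcomplx}, the homological shift turns $(-1)^{|i|}$ into $(-1)^{|i|-n_-}=(-1)^{n_-}(-1)^{|i|}$ — wait, more carefully: the $[-n_-]$ shift changes the sign contribution uniformly by $(-1)^{n_-}$, and the $q$-shift multiplies everything by $q^{(N-1)n_+-Nn_-}$ and the extra $q^i$ in degree $i$ combines with $q^{|i|}$ already present; tracking this bookkeeping one obtains exactly
$$\chi_q\bigl(H^*(\cF(L))\bigr)=(-1)^{n_-}q^{(N-1)n_+-Nn_-}\sum_i q^{|i|}P_N(\Gamma_i)=P_N(L),$$
which is the displayed state sum from Section~\ref{sec:slN}. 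I expect the main obstacle to be purely organizational rather than conceptual: carefully matching the homological and quantum degree conventions (signs, the placement of homological degree zero in Figure~\ref{fig:geom-cplx}, and the exact form of the shift in Definition~\ref{def:gradedcomplx}) so that the factors $(-1)^{n_-}$ and $q^{(N-1)n_+-Nn_-+|i|}$ come out precisely as in the skein-theoretic definition; the genuinely mathematical input (exactness of $\cF$ on the complex, and that MOY relations are categorified by Lemma~\ref{lem:moy-F}) is already in hand.
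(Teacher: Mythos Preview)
Your proposal is correct and follows exactly the approach the paper indicates: the paper does not write out a proof at all, but simply states that the theorem follows from Definition~\ref{def:gradedcomplx}, Theorem~\ref{thm:inv}, and Lemma~\ref{lem:moy-F}, and you have supplied precisely the standard expansion of that implication (Euler characteristic computed on the complex, MOY decompositions forcing $\qdim\cF(\Gamma)=P_N(\Gamma)$, and the shift bookkeeping recovering the state sum). Your own caveat about carefully tracking the sign and $q$-shift conventions is well placed, but there is no missing idea.
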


The MOY-relations are also the last bit that we need in 
order to show the following theorem. 
\begin{thm} For any link $L$, the bigraded complex $\mathcal{F}(L)$ is 
isomorphic to the Khovanov-Rozansky complex $KR(L)$ in \cite{KR}.  
\end{thm}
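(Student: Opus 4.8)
The plan is to compare the two complexes generated by the same link diagram $D$. Both $\mathcal{F}(L)$ and $KR(L)$ are obtained as the total complex of a cube of resolutions, indexed by the $2^n$ complete flattenings $\Gamma_i$ of $D$ (where $n$ is the number of crossings), with the same overall shifts fixed by Definition~\ref{def:gradedcomplx}. So the real content is to produce, for each resolution $\Gamma$, an isomorphism of graded vector spaces $\mathcal{F}(\Gamma)=\mathrm{Hom}_{\foam}(\emptyset,\Gamma)\xrightarrow{\ \sim\ } H^*(\Gamma)$ — where $H^*(\Gamma)$ denotes the homology of the matrix factorization that Khovanov and Rozansky attach to the web $\Gamma$ — and then to check that these isomorphisms intertwine the edge maps of the two cubes, i.e. that the saddle/zip/unzip foams of Figure~\ref{fig:geom-cplx} are sent to the corresponding chain maps in \cite{KR}.

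First I would construct the isomorphism $\mathcal{F}(\Gamma)\cong H^*(\Gamma)$. The natural candidate is exactly the Kapustin-Li map: by the glueing properties recalled in Section~\ref{sec:glueing}, a foam $u\in\foam(\emptyset,\Gamma)$ is sent by the KL-formula to an element of $\mathrm{Ext}(\emptyset,M_\Gamma)\cong H^*(\Gamma)$, and by Definition~\ref{defn:foam} this linear map $\langle\ \rangle_{KL}\colon\foam(\emptyset,\Gamma)\to H^*(\Gamma)$ is injective (this is precisely the injectivity argument used in the proof of Proposition~\ref{prop:principal rels2}). To upgrade injectivity to an isomorphism it suffices to show the dimensions (or better, the graded dimensions) agree. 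Here is where the MOY decomposition Lemma~\ref{lem:moy-F} does the work: both $\gdim\mathcal{F}(\Gamma)$ and $\gdim H^*(\Gamma)$ satisfy the same recursion under the web moves of Figure~\ref{fig:moy} — for $H^*(\Gamma)$ this is proved in \cite{KR}, and for $\mathcal{F}(\Gamma)$ it is Lemma~\ref{lem:moy-F} — with the same base cases $\mathcal{F}(\unknot)\cong H^*(\mathbb{CP}^{N-1})\{-N+1\}$ and $\mathcal{F}(\bigcirc\!\!\bigcirc)\cong H^*(\G_{2,N})\{-2N+4\}$. Since the MOY calculus evaluates any web built from simple and double edges, an induction on the number of edges gives $\gdim\mathcal{F}(\Gamma)=\gdim H^*(\Gamma)=P_N(\Gamma)$ for every resolution $\Gamma$, hence $\langle\ \rangle_{KL}$ is a graded isomorphism.

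Next I would verify compatibility with the differentials. For each edge of the cube the map on the foam side is (a sign times) the elementary foam $\figins{0}{0.2}{ssaddle}$ or $\figins{0}{0.2}{ssaddle_ud}$ of Figure~\ref{fig:geom-cplx}, viewed as composition $\foam(\emptyset,\Gamma)\to\foam(\emptyset,\Gamma')$; under $\langle\ \rangle_{KL}$, and using the glueing property again, this becomes precisely the morphism of matrix factorizations that KL assigns to that elementary foam, which by the construction of Section~\ref{sec:KL} (the matrix factorizations $MF_1$, $MF_2$ attached to singular edges, and the $\chi_0$, $\chi_1$ type maps between them) is the zip/unzip map of \cite{KR}, up to a scalar in $\bQ^*$. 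One must also check the signs match on the nose, which is a matter of comparing the cube sign conventions — and note that $\mathcal{F}$ is only defined up to $\bQ^*$ on homotopies anyway (Proposition~\ref{prop:func}), so an overall edge-by-edge scalar ambiguity is harmless as long as it can be absorbed by rescaling the isomorphisms $\mathcal{F}(\Gamma)\cong H^*(\Gamma)$ resolution by resolution; the standard argument (as in the $\mathfrak{sl}(2)$ and $\mathfrak{sl}(3)$ cases) is that the cube is "projectively commutative" and the scalars can be fixed consistently because each square of the cube commutes up to a sign that is forced.

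The main obstacle I expect is this last consistency-of-scalars point together with pinning down the isomorphism $\mathcal{F}(\Gamma)\cong H^*(\Gamma)$ \emph{naturally} enough that it commutes with \emph{all} edge maps simultaneously, not just one at a time. Concretely, one needs that the KL-evaluation intertwines the $\foam$-side composition with the matrix-factorization composition coherently across the whole cube; the honest way to see this is to observe that the entire cube on the foam side maps to the entire cube on the KR side by one global application of the KL functor (via the glueing/composition property in Section~\ref{sec:glueing}, which says KL sends composition of foams to composition in $\mathrm{Ext}$), so coherence is automatic once injectivity and the dimension count are in hand. Thus the proof reduces to: (i) the dimension count via Lemma~\ref{lem:moy-F}, which is the crux, and (ii) checking the two base webs, which is the content of the already-stated computations $\mathcal{F}(\unknot)\cong H^*(\mathbb{CP}^{N-1})\{-N+1\}$ and $\mathcal{F}(\bigcirc\!\!\bigcirc)\cong H^*(\G_{2,N})\{-2N+4\}$ matched against Propositions~6 and~7 of \cite{KR}. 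Assembling these, $\mathcal{F}(L)$ and $KR(L)$ are isomorphic as bigraded complexes.
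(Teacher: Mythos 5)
Your plan follows the paper's outline: use the KL-evaluation to get a graded injection $\mathcal{F}(\Gamma)\to H^*(\Gamma)$ for each resolution $\Gamma$, upgrade it to an isomorphism via the MOY decompositions of Lemma~\ref{lem:moy-F} matched against the dimensions in \cite{KR}, and then check compatibility with the differentials. The gap is in the last step. You assert the KL-image of a zip foam ``is the zip/unzip map of \cite{KR}, up to a scalar in $\bQ^*$,'' but give no reason; the paper's reason is a concrete computation, namely that the $q$-dimension of $\Ext(\Gamma_1,\Gamma_2)$ in degree $1$ is exactly $1$ (from $q^{2N-2}\qdim(\Gamma)=q+q^2(\cdots)$, $\Gamma$ the theta web), which forces any degree-$1$ morphism, in particular the KL-image of the zip, to be a scalar multiple of the KR zip.

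More seriously, your final paragraph claims that ``coherence is automatic once injectivity and the dimension count are in hand'' because KL respects composition. That is not correct, and it contradicts the scalar ambiguity you yourself flagged a paragraph earlier. Functoriality of the KL-formula only tells you that the foam cube is carried to \emph{some} (anti-)commuting cube of $\Ext$-morphisms; it does not tell you it lands on the \emph{KR} cube, because the KR differential is a specific degree-$1$ morphism and the KL-image of the zip a priori differs from it by a nonzero scalar $\lambda$. The paper's mechanism for removing this discrepancy is specific and cannot be waved away: by (RD$_1$) the unzip then commutes up to $\lambda^{-1}$, and one then uses Khovanov's ``twist equivalence'' from \cite{khovanovfrob} — rescale $\langle\ \vert_{KL}$ at each vertex of the hypercube by the appropriate power of $\lambda$ (depending on how many zips minus unzips lead to that vertex), which is well-defined precisely because the squares of the hypercube (anti-)commute. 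Appealing to a ``standard argument'' about projective commutativity without identifying the reciprocal relation between the zip and unzip scalars, and without specifying the vertexwise rescaling, skips the one place in the proof that actually requires an idea.
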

\begin{proof} The map $\langle\ \vert_{KL}$ 
defines a grading preserving linear injection 
$\mathcal{F}(\Gamma)\to KR(\Gamma)$, for 
any web $\Gamma$. Lemma~\ref{lem:moy-F} implies that the 
graded dimensions of $\mathcal{F}(\Gamma)$ and 
$KR(\Gamma)$ are equal, so $\langle\ \vert_{KL}$ is 
a grading preserving linear isomorphism, for any web $\Gamma$.   

To prove the theorem we would have to show that 
$\langle\ \vert_{KL}$ commutes 
with the differentials. We call  
$$
\figins{0}{0.7}{ssaddle} \qquad
$$
the {\em zip} and 
$$
\figins{0}{0.7}{ssaddle_ud} \qquad
$$
the {\em unzip}. Note that both the zip and the unzip 
have $q$-degree $1$. Let $\Gamma_1$ be the source web of the zip and 
$\Gamma_2$ its target web, and let $\Gamma$ be the theta web, which is 
the total boundary of the zip where the vertical edges have been pinched. 
The $q$-dimension of 
$\Ext(\Gamma_1,\Gamma_2)$ is equal to 
$$q^{2N-2}\qdim(\Gamma)=q+q^2(\ldots),$$
where $(\ldots)$ is a polynomial in $q$. Therefore the differentials in the 
two complexes 
commute up to a scalar. By the removing disc relation (RD$_1$) we see that if the ``zips'' 
commute up to $\lambda$, then the ``unzips'' commute up to $\lambda^{-1}$. If $\lambda\ne 1$, 
we have to modify our map between the two complexes slightly, in order to get an honest morphism of complexes. We 
use Khovanov's idea of ``twist equivalence'' in 
\cite{khovanovfrob}. For a given 
link consider the hypercube of resolutions. If an arrow in the hypercube 
corresponds to a zip, multiply $\langle(\mbox{target})\vert_{KL}$ by $\lambda$, where 
target means the target of the arrow. 
If it corresponds to an unzip, multiply $\langle(\mbox{target})\vert_{KL}$ by $\lambda^{-1}$.
This is well-defined, because all squares in the hypercube (anti-)commute. By definition this 
new map commutes with the differentials and therefore proves that the two complexes are 
isomorphic.   
\end{proof}

We conjecture that the above isomorphism actually extends to link cobordisms, 
giving a projective natural isomorphism between the two projective link 
homology functors. Proving this would require a careful comparison between the 
two functors for all the elementary link cobordisms.


\vspace*{1cm}

\noindent {\bf Acknowledgements} The authors thank Mikhail Khovanov for 
helpful comments and Lev Rozansky for patiently explaining the 
Kapustin-Li formula in detail to us. 

The authors were supported by the 
Funda\c {c}\~{a}o para a Ci\^{e}ncia e a Tecnologia (ISR/IST plurianual funding) through the
programme ``Programa Operacional Ci\^{e}ncia, Tecnologia, Inova\-\c
{c}\~{a}o'' (POCTI) and the POS Conhecimento programme, cofinanced by the European Community 
fund FEDER.


\end{document}